\newcommand{\E}{{\mathbb E}}
\newcommand{\R}{{\mathbb R}}
\renewcommand{\P}{{\mathbb P}}
\def\argmin{\mathop{\rm arg \; min}\limits}%
\def\KLA{{\rm KLA}}%
\def\MKLA{{\rm MKLA}}%
\def\MUKLA{{\rm UKLA}}%
\def\CV{{\rm CV}}%
\newcommand{\bX}{\mbox{$\boldsymbol{X}$}}
\newcommand{\bY}{\mbox{$\boldsymbol{Y}$}}
\newcommand{\bZ}{\ensuremath{\boldsymbol{Z}}}
\newcommand{\bV}{\boldsymbol{V}}
\newcommand{\bW}{\boldsymbol{W}}
\newcommand{\bP}{\boldsymbol{P}}
\newcommand{\bq}{\boldsymbol{q}}
\newcommand{\bj}{\boldsymbol{j}}
\newcommand{\bE}{\boldsymbol{E}}
\newcommand{\bQ}{\boldsymbol{Q}}
\newcommand{\bdelta}{\boldsymbol{\delta}}
\newcommand{\CB}[1]{{\color{black} #1}} 
\newcommand{\CM}[1]{{\color{black} #1}} 
\numberwithin{equation}{section}
\title{Low-rank matrix denoising for count data using unbiased Kullback-Leibler risk estimation}
\author{ J\'{e}r\'{e}mie Bigot \& Charles Deledalle   \\
\\  Institut de Math\'ematiques de Bordeaux et CNRS  (UMR 5251)   \\ Universit\'e de Bordeaux }
\author{ J\'{e}r\'{e}mie Bigot$^{1}$  \& Charles Deledalle$^{1}$  \\
  \\   Institut de Math\'ematiques de Bordeaux et CNRS  (UMR 5251)$^{1}$ 
}
\date{\today}
\begin{document}
\sloppy

\maketitle

\thispagestyle{empty}

\begin{abstract}
\CM{Many statistical studies are concerned with the analysis of observations organized in a matrix form whose elements are count data. When these observations are assumed to follow a Poisson or a multinomial distribution, it is of interest to focus on the estimation of either the intensity matrix (Poisson case) or the compositional matrix (multinomial case) when it is assumed to have a low rank structure. In this setting, it is proposed to construct an estimator minimizing the regularized negative log-likelihood  by a nuclear norm penalty. Such an approach easily yields a low-rank matrix-valued estimator with positive entries  which belongs to the set of row-stochastic matrices in the multinomial case. Then, as a main contribution, a data-driven procedure is constructed to select the regularization parameter in the construction of such estimators by minimizing (approximately) unbiased estimates of the Kullback-Leibler (KL) risk in such models, which generalize Stein's unbiased risk estimation originally proposed for Gaussian data. The  evaluation of these quantities  is a delicate problem, and novel methods are introduced to obtain accurate numerical approximation of such unbiased estimates. Simulated data are used to validate this way of selecting regularizing parameters  for low-rank matrix estimation from count data.  For data following a multinomial distribution, the performances of this approach are also compared to $K$-fold cross-validation. Examples from a survey study and  metagenomics also illustrate the benefits of this methodology for real data analysis.}
\end{abstract}

\noindent \emph{Keywords:}  low-rank matrix denoising; count data; Poisson distribution; multinomial distribution; nuclear norm penalization; Kullback-Leibler risk; generalized Stein's unbiased risk estimate; optimal shrinkage rule; survey study; metagenomics data. \\

\noindent\emph{AMS classifications:} 62H12, 62H25.

\section*{Acknowledgments}  J. Bigot is a member of  Institut Universitaire de France (IUF), and this work has been carried out with financial support from the IUF. We gratefully acknowledge Anru Zhang for providing the the COMBO dataset analyzed in this paper.


\section{Introduction}

\subsection{Motivations}

In many applications, it is of interest to estimate a signal matrix $\bX$ based on observations from multiple samples that are organized in a matrix form.  More precisely, we are interested in estimating  $\bX \in \R^{m \times k}$ in the model
\begin{equation}
\bY = \bX + \bW, \label{eq:noisemodel}
\end{equation}
where $\bX = \EE (\bY)$ and $\bW = \bY - \EE (\bY)$. In this paper, we focus on the setting where the data matrix $\bY$ at hand is made of observations that are count data modeled as being random variables sampled from a discrete probability measure such as Poisson  or  multinomial distribution.  For such distributions, a case of  particular interest is the situation when there exists many zeros in the data matrix $\bY$ due to a limited number of available observations. Such a setting makes naive estimators based on likelihood maximization   inappropriate as they result in an estimated signal matrix $\hat{\bX}$ with an excessive number of zeros. To deal with the issue of many observed zeros in the data, we shall consider  applications where $\bX$  can be assumed to have a low rank structure which is often met in practice when there exists a significant correlation between the columns or rows of $\bX$.  There exist various applications involving the observations of count data where such an assumption holds, and, in this paper, we discuss the examples of data from survey  and metagenomics studies.

\subsection{Main contributions}

In low rank matrix denoising with a noise matrix $\bW$ whose element are centered random variables with homoscedastic variances, spectral estimators which shrink or threshold the singular values of the data matrix lead to optimal estimators \cite{donoho2014,GDIEEE14}. However, for  count data sampled from  Poisson or multinomial distributions, the signal matrix $\bX$ has non-negative entries which is a property that is not satisfied by spectral estimators based on the singular value decomposition of $\bY$. Therefore, in this paper, we propose to construct an estimator $\hat{\bX} = (\hat{X}_{ij})$ taking its values in a subset
$
\Ss = \left\{ \Omega(\bZ) \; : \; \bZ \in \RR^{m \times k} \right\},
$
of $m \times k$ matrices with positive entries, parameterized by a (right) invertible mapping $\Omega: \RR^{m \times k} \mapsto  \RR_{+}^{m \times k}$ depending on the data at hand. For observations sampled from Poisson distributions,
\begin{equation}
\Omega(\bZ)_{ij} = \exp(Z_{ij}) , \label{eq:OmegaPoisson}
\end{equation}
while in the multinomial case
\begin{equation}
\Omega(\bZ)_{ij} =
n_{i} \frac{\exp Z_{ij}}{\sum_{q=1}^{k} \exp Z_{iq}},  \label{eq:OmegaMulti}
\end{equation}
for all $1 \leq i \leq m$ and $1 \leq j \leq k$, where $n_i$ is the sum of the entries of the $i$-th row of the data matrix $\bY$.  For Poisson (resp.\ multinomial) data, we shall refer to $\bX = \Omega(\bZ)$ as the intensity (resp.\ compositional) matrix. To be more precise, in the multinomial case, the compositional matrix is the row stochastic matrix $\bP$ with entries
$
\bP_{ij} = \frac{\exp Z_{ij}}{\sum_{q=1}^{k} \exp Z_{iq}},
$
and $\bX = \diag(n_1,\ldots,n_{m}) \bP$.

Then, we propose to construct an estimator $\hat{\bX}_{\lambda} = \Omega(\hat{\bZ}_{\lambda})$ through the following variational approach
\begin{equation}
\hat{\bZ}_{\lambda} \in \argmin_{\bZ \in \RR^{m \times k}}  -\log \tilde{p}(\bY;  \Omega(\bZ) ) + \lambda \|\bZ - \frac{1}{k} \bZ  \mathds{1} \mathds{1}^t\|_{*} \label{eq:variational}
\end{equation}
where $\tilde{p}(\bY; \Omega(\bZ) )$ is the (possibly normalized)
likelihood of the data in a given parametric family of discrete
distributions with parameter $\Omega(\bZ)$, $\| \cdot \|_{*}$ denotes
the nuclear norm of a matrix, $\mathds{1}$ is a $k$ dimensional column
vector of ones, \CM{the notation $u^t$ denotes the transpose of a column
vector $u$}, and $\lambda > 0$ is a regularization
parameter which controls the shrinkage of the singular values of
$\hat{\bZ}_{\lambda}$. The main motivation for using the nuclear norm
$\|\cdot\|_{*}$ as a regularization term comes from the a priori
assumption that the unknown signal matrix $\bZ = \Omega^{-1}(\bX)$ has
a low rank structure.  Incorporating such a prior knowledge is of
particular interest when the count data matrix $\bY$ has many entries
equal to zero which makes a naive estimator based on maximum
likelihood inaccurate. Note that the regularization term in
\eqref{eq:variational} could also be chosen as $ \|\bZ\|_{*}$, but we
have found that only penalizing the singular values of the
row-centered matrix $\bZ - \frac{1}{k} \bZ \mathds{1} \mathds{1}^t$
yields better results in our numerical experiments.

A key issue is obviously the choice of the shrinkage parameter $\lambda$. The main contribution of this paper is to derive an (approximately) unbiased estimator $\MUKLA(\hat{\bX}_{\lambda} )$ of the expected Kullback-Leibler risk $\MKLA(\hat{\bX}_{\lambda},\bX)$ of $\hat{\bX}_{\lambda} = \Omega(\hat{\bZ}_{\lambda})$ to choose $\lambda$ among a possible set $\Lambda$ of values (the precise definitions of  $\MUKLA$ and $\MKLA$ are given in Section \ref{sec:unbiased}). Our approach generalizes to count data the principle of Stein's unbiased risk estimation (SURE) \cite{Stein81} that has been used in  \cite{MR3105401,donoho2014} to select regularization parameters in low-rank matrix denoising from Gaussian data.

More precisely, a data-driven choice of $\lambda$ is obtained by solving the minimization problem
\begin{equation}
\min_{\lambda \in \Lambda} \MUKLA(\hat{\bX}_{\lambda} ),  \label{eq:minMUKLA}
\end{equation}
where $\MUKLA(\hat{\bX}_{\lambda})$ is a random quantity (depending on the data $\bY$) which satisfies
\begin{equation}
\argmin_{\lambda \in \Lambda} \E \left( \MUKLA(\hat{\bX}_{\lambda}) \right) = \argmin_{\lambda \in \Lambda} \MKLA(\hat{\bX}_{\lambda},\bX). \label{eq:argminMUKLA}
\end{equation}
We  focus on data sampled from Poisson and multinomial distributions.  Unbiased estimators of the Kullback-Leibler (KL) risk have already been proposed in \cite{Bigot17} in the Poisson case. In this paper, we derive novel estimators of the MKLA risk in the multinomial case which, to the best of our knowledge, has not been considered so far. A main difficulty arising from the analysis of count data lies in the  evaluation of  estimators  $\MUKLA(\hat{\bX}_{\lambda} )$ of the expected KL risk. Indeed, the numerical evaluation of such estimators requires the computation of terms involving  a notion of differentiation for functions defined on discrete domains whose exact calculation is intractable. Therefore, another contribution of this work  is to develop fast approaches for the numerical approximation of unbiased estimates of the expected KL risk  for estimators defined through the non-differentiable variational problem \eqref{eq:variational}.

\subsection{Applications involving count data with possibly many zeros}

There exits numerous applications involving the observations of count data organized in a matrix form, and we shall focus on examples where the observed data matrix may contain many zeros.

A first example is data collected from a survey. To be more precise, let us consider the problem where $n$ individuals answer a survey for $m$ different products denoted $1 \leq i \leq m$. For each product $i$, each individual is asked to choose among $k$ unique possible answers (the available choices being the same for all products). For instance, for \texttt{product i} the possible answers are: \texttt{very bad, bad, okish, good, very good}, and thus here $k=5$. For each individual, we collect the answers in a $\{0, 1\}^{m \times k}$ matrix, and we sum all these matrices over the $n$ individuals, to finally obtain a  matrix $\bY \in \{0, 1, \ldots, n\}^{m \times k}$ made of count data. The goal is then to estimate the  probability distribution of the answers for each product $i$. In the situation where the individuals do not give their answer for all products $i$, the data matrix $\bY$ may contain many zeros. Nevertheless, for survey data, it is reasonable to assume that the expectation of $\bY$ is approximately low-rank, as for example, many products may generate very close probability distribution of answers.

A second example is metagenomics sequencing  \cite{Cao17} which aims at quantifying the bacterial  abundances in biological samples for microbiome studies. This  technology allows to quantify the human microbiome by using direct DNA sequencing to obtain counts of sequencing reads of marker genes  that can be assigned to a set of bacterial taxa in the observed samples. The bacteria composition in different samples can thus be inferred from such count data. However, for various technical reasons, some rare bacterial taxa might not be measured when using metagenomics sequencing, and this results in zero read counts in the observed data matrix. A naive approach based on count normalization to estimate the taxon composition may thus lead to an excessive number of zeros. Various techniques have been proposed to deal with the issue of observing many zero counts, and we refer to \cite{Cao17}  for a recent overview. When the data matrix is made of the combination of the observed compositions from different individuals, recent studies on co-occurrence pattern \cite{Faust12} and  relationships in microbial communities  \cite{Chaffron10} suggest that searching to estimate a composition matrix  with a low-rank structure is a valid assumption  which leads to estimators with better performances than naive estimators \cite{Cao17}.

\subsection{Related literature on low-rank matrix estimation}

Low-rank matrix estimation in model \eqref{eq:noisemodel} has been extensively studied   in the setting where the additive noise matrix $\bW$ has Gaussian entries with homoscedastic variance \cite{MR3054091,MR3105401,donoho2014,MR3200641}.  However, in many situations, the  noise can be highly heteroscedastic meaning that the amount of variance in the observed data matrix $\bY$ may significantly change from entry to entry.  Examples can be found in photon imaging \cite{SalmonHDW14}, network traffic analysis \cite{Bazerque2013} or genomics for microbiome studies \cite{Cao17}. In such applications, the observations are count data that are modeled by Poisson or multinomial distributions which leads to heteroscedasticity. Motivated by the need for statistical inference from such data, the literature on statistical inference from high-dimension matrices with heteroscedastic noise has thus recently been growing \cite{Bigot17,liu2018,MAL2016,Robin19,Zhang18}.

The problem of  estimating a  low-rank matrix from Poisson data has also been  considered in \cite{Bigot17,Robin19,SalmonHDW14,Cao16}. With respect to these works, our main contributions are to propose a novel approach to chose the regularization parameter in a data-driven way using unbiased risk estimation, and to guarantee to have an estimated intensity matrix with positive entries.

 \CB{In this paper, we focus on the example of count data sets with many observed zeros, but 
our approach also share similarities with the problem of matrix completion from missing count data using nuclear norm penalization   \cite{klopp2014,CIS-295167,Cao16}}.  However,  in  matrix completion it is generally assumed that there exist observations that are missing at random, whereas, in the applications considered in this paper, the observed zeros in the data matrix are the results of under-sampling. Moreover, works in the matrix completion literature generally focus on recovering missing observations whereas our approach is focused on estimating an underlying intensity or compositional matrix.

The estimation of a compositional matrix from multinomial data  under a low rank assumption has been recently considered in \cite{Cao17} with application to metagenomics for quantifying bacterial abundances in microbiome studies. The estimator proposed in \cite{Cao17}  is also defined by regularization of the negative log-likelihood with a penalty term involving the compositional matrix itself that is constrained to have lower bounded entries by a positive constant. Therefore, the approach in \cite{Cao17}  requires the calibration of two tuning parameters that are chosen in practice by cross-validation. Our approach only requires the calibration of the parameter $\lambda$ by  minimizing an unbiased estimator of the expected KL risk. {\color{black} Moreover, for data following a multinomial distribution, our numerical results suggest that the cross-validation methodology proposed in \cite{Cao17} does not lead to a consistent estimation of the KL risk, and thus it yields to a data-driven selection of  the parameter $\lambda$ that is less interpretable.} Further details on the comparison with the work in \cite{Cao17} are given in Section \ref{sec:num} on numerical experiments.

\CM{Finally, it is natural to ask if the methodology that we propose to construct an estimator of the KL risk from either Poisson or multinomial data could be extended to other types of count data. We believe that this work could be extended to data sampled from another discrete exponential family  (e.g.\ in the binomial or negative-binomial case). However, such an extension is beyond the scope of this work, as our approach requires a statistical analysis that is specific to the expression of the KL risk from  a given discrete distribution to derive an appropriate unbiased estimator as done in Sections \ref{sec:Poisson} and \ref{sec:multinomial}. }

\subsection{Organization of the paper}

In Section \ref{sec:unbiased}, we derive the construction of unbiased estimates of the KL risk for any measurable function of the data taking its values in the space of intensity or compositional matrices. In Section \ref{sec:construction}, we detail algorithms to compute the regularized estimator defined by the variational problem \eqref{eq:variational} as well as numerical methods to select the regularization parameter $\lambda$ in a data-driven way by minimizing \eqref{eq:minMUKLA}. Numerical experiments on simulated and read data are described in Section \ref{sec:num} to illustrate the performances of our approach.


\subsection{Publicly available source code}

For the sake of reproducible research,
a Python code available at the following address: 
  \url{https://www.charles-deledalle.fr/pages/ukla_count_data.php}
implements the proposed estimators and the experiments
carried out in this paper.

\section{Unbiased Kullback-Leibler risk estimation from count data} \label{sec:unbiased}

In this section, we discuss the construction of unbiased estimates of the Kullback-Leibler risk of any estimator $\hat{\bX} = f(\bY)$ where $f$ is a measurable function of the data matrix $\bY$. For $1 \leq i \leq n$ and $1 \leq j \leq m$, we shall denote by $\bE_{ij}$ the $(i,j)$-th element of the canonical basis of $m \times k$ matrices (namely the matrix with all entries equal to zero except the $(i,j)$-th one which is equal to one).

\subsection{The Poisson case} \label{sec:Poisson}

Let us assume that the observations are Poisson data. This corresponds to the setting where the entries $Y_{ij}$ of the data matrix $\bY$ are independent and sampled from a Poisson distribution with parameter $X_{ij} > 0$, meaning that
\begin{equation}
\P\left(Y_{ij} = y | X_{ij} \right) = e^{-X_{ij}} \frac{X_{ij}^y}{y!}, \quad \mbox{for} \quad y \in \NN. \label{eq:modPoisson}
\end{equation}
Following the terminology and notation in \cite{Bigot17}, for a given estimator $\hat{\bX} = f(\bY)$,  we define its expected KL analysis risk as
\begin{equation}
  \MKLA(\hat{\bX},\bX) =  \E \left[ \KLA(\hat{\bX},\bX) \right]
  \mathop{=}^{\text{ Poisson }}
  \sum_{i=1}^{m} \sum_{j=1}^{k}  \E \left[ \hat{X}_{ij} - X_{ij} -  X_{ij} \log \left( \frac{\hat{X}_{ij}}{X_{ij}}  \right)  \right], \label{eq:MKLAPoisson}
\end{equation}
where the above expectation is taken with respect to the distribution of $\hat{\bX} = (\hat{X}_{ij})$, and the (empirical) KL analysis risk of $\hat{\bX}$ is defined as
$$
\KLA(\hat{\bX},\bX)  = \sum_{i=1}^{m} \sum_{j=1}^{k}    \sum_{y \in \NN}  \log\left( \frac{\P\left(Y_{ij} = y |  X_{ij} \right)}{\P\left(Y_{ij} = y |  \hat{X}_{ij}  \right)}\right)  \P\left(Y_{ij} = y |  X_{ij} \right) .
$$

For Poisson data, the problem of deriving unbiased estimate of
$\MKLA(\hat{\bX},\bX)$ has been considered in
\cite{Deledalle,Bigot17}. As shown in \cite{Deledalle}, the use of
Hudson's Lemma (see \cite{hudson1978} and Lemma 2.1 in \cite{Bigot17})
allows to estimate (in an unbiased way) the expectation of the
quantity $X_{ij} \log \hat{X}_{ij}$ in equation
\eqref{eq:MKLAPoisson}, which leads to the following result
(Proposition 2.3 in \cite{Bigot17}).

\begin{prop} \label{prop:SURE-MKLA}
Let $\bY \in \NN^{m \times k}$ be a matrix whose entries are independently sampled from the Poisson distribution \eqref{eq:modPoisson}.
Let $f : \NN^{m \times k} \to \R_{+}^{m \times k}$ be a measurable mapping. Let $1 \leq i \leq m$ and $1 \leq j \leq k$, and denote by $f_{ij} : \NN^{m \times k} \to \R_{+}$ the $(i,j)$-th entry of $\hat{\bX} = f(\bY)$. Then, the quantity
\begin{equation}\label{eq:PUKLA}
\MUKLA(\hat{\bX}) \mathop{=}^{\text{ Poisson }}  \sum_{i=1}^{m} \sum_{j=1}^{k} \hat{X}_{ij} - Y_{ij} \log\left( f_{ij}(\bY - \bE_{ij}) \right),
\end{equation}
is an unbiased estimator of $\displaystyle \MKLA(\hat{\bX},\bX) +  \sum_{i=1}^{m} \sum_{j=1}^{k}   X_{ij} -  X_{ij} \log \left( X_{ij}  \right)$.
\end{prop}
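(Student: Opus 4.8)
The plan is to show that $\E[\MUKLA(\hat{\bX})]$ matches the target quantity term by term, with the only substantive step being an application of Hudson's Lemma in the single coordinate $Y_{ij}$. First I would simplify the target. Expanding the definition \eqref{eq:MKLAPoisson} of $\MKLA(\hat{\bX},\bX)$ and adding the deterministic term $\sum_{i,j}(X_{ij} - X_{ij}\log X_{ij})$, the contributions $\sum_{i,j}X_{ij}$ and $\sum_{i,j}X_{ij}\log X_{ij}$ cancel in pairs (using that $X_{ij}$ is deterministic, so $\E[X_{ij}\log(\hat{X}_{ij}/X_{ij})] = X_{ij}\E[\log\hat{X}_{ij}] - X_{ij}\log X_{ij}$), leaving the compact expression
\[
\MKLA(\hat{\bX},\bX) + \sum_{i=1}^{m}\sum_{j=1}^{k}\bigl(X_{ij} - X_{ij}\log X_{ij}\bigr) = \sum_{i=1}^{m}\sum_{j=1}^{k}\Bigl(\E[\hat{X}_{ij}] - X_{ij}\,\E[\log\hat{X}_{ij}]\Bigr).
\]

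Next I would take expectations directly in the definition \eqref{eq:PUKLA}, obtaining $\E[\MUKLA(\hat{\bX})] = \sum_{i,j}\E[\hat{X}_{ij}] - \sum_{i,j}\E\bigl[Y_{ij}\log f_{ij}(\bY - \bE_{ij})\bigr]$. The leading sums agree with those in the simplified target, so the whole statement reduces to establishing, for every fixed pair $(i,j)$, the identity
\[
\E\bigl[Y_{ij}\log f_{ij}(\bY - \bE_{ij})\bigr] = X_{ij}\,\E\bigl[\log f_{ij}(\bY)\bigr] = X_{ij}\,\E[\log\hat{X}_{ij}].
\]

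The main step is to prove this identity via Hudson's Lemma (see \cite{hudson1978} and Lemma 2.1 in \cite{Bigot17}). Because the entries of $\bY$ are independent, I would condition on all entries other than $Y_{ij}$, denoted $\bY^{-(ij)}$; conditionally, $Y_{ij}$ is a Poisson$(X_{ij})$ variable independent of the rest. Writing $\phi$ for the map sending a nonnegative integer $y$ to $\log f_{ij}(\bY')$, where $\bY'$ agrees with $\bY$ off the $(i,j)$ entry and has $(i,j)$ entry equal to $y$, we have $\log f_{ij}(\bY - \bE_{ij}) = \phi(Y_{ij}-1)$ and $\phi(Y_{ij}) = \log\hat{X}_{ij}$. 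Hudson's Lemma then gives $\E[Y_{ij}\,\phi(Y_{ij}-1)\mid \bY^{-(ij)}] = X_{ij}\,\E[\phi(Y_{ij})\mid \bY^{-(ij)}]$, and taking the outer expectation yields exactly the claimed identity. Summing over $(i,j)$ finishes the argument.

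The one point requiring care, and where I expect the only genuine subtlety, is the boundary case $Y_{ij}=0$: there $\bY - \bE_{ij}$ has a negative entry and leaves the domain $\NN^{m\times k}$ on which $f$ is defined. This is harmless, since the prefactor $Y_{ij}=0$ annihilates the term (consistent with the convention $0\cdot\log(\cdot)=0$); indeed the $y=0$ summand contributes nothing to the left-hand side of Hudson's identity, so $f_{ij}$ is only ever evaluated at admissible arguments in $\NN^{m\times k}$. I would note this explicitly to make the reduction rigorous, but it does not affect the computation.
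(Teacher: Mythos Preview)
Your proposal is correct and follows precisely the approach the paper indicates: the paper does not spell out a proof here but attributes the result to \cite{Bigot17} (Proposition~2.3 there) and points to Hudson's Lemma as the key tool for handling the term $X_{ij}\log\hat{X}_{ij}$, which is exactly the reduction and identity you carry out. Your explicit treatment of the boundary case $Y_{ij}=0$ is a welcome clarification that the paper leaves implicit.
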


Since the quantity $\sum_{i=1}^{m} \sum_{j=1}^{k}   X_{ij} -  X_{ij} \log \left( X_{ij}  \right)$ does not depend on the function $f$, it is clear that the expression \eqref{eq:PUKLA} yields  an estimator $\MUKLA(\hat{\bX})$ of the expected KL risk satisfying equality \eqref{eq:argminMUKLA}. However, the right-hand side of  \eqref{eq:PUKLA} is numerically difficult to evaluate. Indeed, a naive approach is to evaluate the mapping $f$ on the modified data matrices $\bY - \bE_{ij}$ for all $1 \leq i \leq m$ and $1 \leq j \leq k$, but this is not computationally feasible for moderate to large values of $m$ and $k$. In Section \ref{sec:construction}, we shall thus discuss fast numerical methods to approximate $\MUKLA(\hat{\bX})$ in the Poisson case.

\subsection{The  multinomial case} \label{sec:multinomial}

We now assume that the observations are multinomial data. This corresponds to the setting where the rows of the data matrix $\bY$ are independent realizations of vectors sampled from multinomial distributions with row dependent parameters.

In the case of multinomial data, each row $Y_{i}$ of $\bY$ is thus assumed to follow a multinomial distribution
with parameters $p_i = (p_{ij})_{1 \leq j \leq k}$  and $n_i$ meaning that
\begin{equation}
\PP \left(Y_{i} = (y_{ij})_{1 \leq j \leq k} | p_i \right) = \frac{n_i!}{ \prod_{j=1}^k y_{ij}!} \prod_{j=1}^k p_{ij}^{y_{ij}} \label{eq:modMulti}
\end{equation}
where $y_{ij} \in \{0, \ldots, n_i\}$ such that $\sum_{j=1}^k y_{ij} = n_i$ and $0 < p_{ij} \leq 1$ such that $\sum_{j=1}^k p_{ij} = 1$. The integer $n_i$ is the sum of the observed values for the $i$-th row. We let these numbers varying from one row to another (in the example of a survey, this corresponds to the assumption that  individuals do not answer for all products $i$). Hence, the $n_i$'s are considered to be fixed parameters throughout this section, and it is assumed that $n_i > 0$ for all $1 \leq i \leq m$. In the numerical experiments reported in Section \ref{sec:num}, we consider various examples of data where $n_i$ is small which implies that the maximum likelihood (ML) estimator of the probabilities $p_{ij}$, namely
\begin{equation}
\hat{p}^{ML}_{ij} = Y_{ij}/n_i = Y_{ij} /  \sum_{j'=1}^{k} Y_{ij'}, \label{eq:MLestimator}
\end{equation}
has poor performances. When the data matrix $\bY$ has many zero counts, a widely used approach in compositional data analysis \cite{Aitchison03} is to simply perform zero-replacement in $\bY$ by an arbitrary value $0 < z < 1$ (e.g.\ $z = 0.5$) which yields  the estimator
\begin{equation}
\hat{p}^{zr}_{ij} = \max(Y_{ij}, z) / \sum_{j'=1}^{k}  \max(Y_{ij'}, z). \label{eq:zr}
\end{equation}
In our setting, low rank assumptions on the compositional matrix
$$
\bP = (p_{ij}) \in [0, 1]^{m \times k}
$$
become necessary to improve the accuracy of the estimators $\hat{\bP }^{ML}$ or $\hat{\bP }^{zr}$. We recall that
$$
\EE[Y_{ij}] = n_i p_{ij}, \;  \Var[Y_{ij}] = n_i p_{ij} (1 - p_{ij}), \; \Cov(Y_{ij},Y_{ij'}) = - n_i p_{ij} p_{ij'} \quad \mbox{if} \quad j' \ne j,
$$
and thus the matrix $\bX = \EE(\bY)$ is such that
$$
\bX = \diag(n_1,n_2,\ldots,n_m) \bP.
$$

\subsubsection{Definition of the Kullback-Leibler analysis risk}

For a given estimator $\hat{\bP} = f(\bY)$ of $\bP$ (where $f$ is a mapping taking its values in the space of  row stochastic matrices), we denote its $(i,j)$-th entry by $\hat{p}_{ij}(\bY)$, and an estimator of $\bX$ is obviously given by
$$
\hat{\bX} = \diag(n_1,n_2,\ldots,n_m) \hat{\bP}.
$$
For clarity, we shall  sometimes write $\hat{\bP} = \hat{p}(\bY) = (\hat{p}_{ij}(\bY))$. Now, the expected KL analysis risk of $\hat{\bX}$ (or equivalently of $\hat{\bP}$)  in the multinomial case is defined as
\begin{equation}
\MKLA(\hat{\bX},\bX) = \E \left[  \KLA(\hat{\bX},\bX)  \right]   \mathop{=}^{\text{ Multinomial }}   \sum_{i=1}^m  \E \left[ \sum_{j=1}^{k}  p_{ij}  \log \frac{{p}_{ij}}{\hat{p}_{ij}(\bY)}  \right], \label{eq:MKLAMulti}
\end{equation}
where the above expectation is taken with respect to the distribution of $\hat{\bP}$,  and the (empirical) {\it normalized} KL risk of $\hat{\bX}$ is defined as
\begin{equation}
\KLA(\hat{\bX},\bX)  \mathop{=}^{\text{ Multinomial }}   \sum_{i=1}^{m} \frac{1}{n_i}    \sum_{y \in [n_i]}  \log\left( \frac{\P\left(Y_{i} = y |  p_i \right)}{\P\left(Y_{i} = y |  \hat{p}_i \right)}\right)  \P\left(Y_{i} = y |  p_i\right), \label{eq:normKLrisk}
\end{equation}
with $Y_{i}$ denoting the $i$-th row of $\bY$,  $[n_i] := \left\{ y \in \{0, \ldots, n_i\}^k \; : \; \sum_{j=1}^k y_j = n_i \right\} $. We have chosen a normalized version of the KL risk to take into account the setting where some of the $n_i$'s  take large values. Indeed, in this case, the term  $\E \left[ \sum_{j=1}^{k}  p_{ij}  \log \frac{{p}_{ij}}{\hat{p}_{ij}(\bY)}  \right]$ may dominate the value of the {\it standard} KL risk (that is without normalization), and there is little influence of rows with small values of $n_i$.


\subsubsection{Unbiased estimators of the KL risk} \label{sec:unbiasedKL}

The following theorem is a key result to obtain unbiased estimators of the KL risk in the multinomial case.

\begin{thm} \label{theo:mainMulti}
  Let $Y^{(n)} \in \NN^k$ be a random vector sampled from  a multinomial distribution with parameters  $p = (p_j)_{j=1, \ldots, k}$ and $n  \in \NN_*$ meaning that
\begin{equation}
\PP \left(Y^{(n)} = y | p \right) = \frac{n!}{ \prod_{j=1}^k y_{j}!} \prod_{j=1}^k p_{j}^{y_{j}} \mbox{ for } y \in [n], \label{eq:defMulti}
\end{equation}
where $[n] := \left\{ y \in \{0, \ldots, n\}^k \; : \; \sum_{j=1}^k y_j = n \right\}$. Let $f : \RR^k \to \R$ be any measurable mapping. Then, for any $j \in {1, \ldots, k}$, one has that
\begin{equation}
  \EE\left[
    p_j f(Y^{(n)})
    \right]
  =
  \EE\left[
    \frac{Y^{(n+1)}_j}{n+1} f(Y^{(n+1)} - e_j)
    \right], \label{eq:SURE_Multi}
\end{equation}
  where $e_j$ denotes the $j$-th element of the standard orthonormal basis of $\RR^k$.
\end{thm}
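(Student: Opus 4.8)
The plan is to prove the identity by direct computation, expanding both sides as explicit sums over the multinomial supports and establishing a term-by-term correspondence via an index shift. The key observation is that the appearance of $p_j$ on the left-hand side can be "absorbed" into the multinomial probability by raising the exponent on $p_j$, which is exactly what increasing $n$ to $n+1$ accomplishes. This is the multinomial analogue of Hudson's Lemma (cited in the excerpt for the Poisson case), and I expect the proof to follow the same spirit of trading a factor of the parameter for a shift in the data.

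First I would write out the left-hand side explicitly as
$$
\EE\left[ p_j f(Y^{(n)}) \right] = \sum_{y \in [n]} p_j \, \frac{n!}{\prod_{\ell=1}^k y_\ell!} \prod_{\ell=1}^k p_\ell^{y_\ell} \, f(y).
$$
The factor $p_j$ combines with $p_j^{y_j}$ to give $p_j^{y_j+1}$. The central step is then the change of variables $z = y + e_j$, so that $z_j = y_j + 1$ and $z_\ell = y_\ell$ for $\ell \neq j$; as $y$ ranges over $[n]$, the vector $z$ ranges over exactly those elements of $[n+1]$ with $z_j \geq 1$, which is the full support $[n+1]$ except for points where $z_j = 0$ (on which the factor $z_j$ below vanishes anyway). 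Under this substitution the product $\prod_\ell p_\ell^{y_\ell} \cdot p_j = \prod_\ell p_\ell^{z_\ell}$, matching the multinomial weight for $Y^{(n+1)}$.

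The main bookkeeping obstacle will be reconciling the combinatorial prefactors. After the substitution, the left-hand side carries the factor $n!/\prod_\ell y_\ell! = n!/\bigl((z_j-1)! \prod_{\ell \neq j} z_\ell!\bigr)$, whereas the target multinomial coefficient on the right is $(n+1)!/\prod_\ell z_\ell!$. The ratio of these two is
$$
\frac{n! / \bigl((z_j-1)!\prod_{\ell\neq j} z_\ell!\bigr)}{(n+1)!/\prod_\ell z_\ell!} = \frac{z_j}{n+1},
$$
which is precisely the weight $\frac{Y^{(n+1)}_j}{n+1}$ appearing on the right-hand side. So I would verify this prefactor identity carefully, since a single misplaced factorial would break the argument.

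Finally, I would assemble the pieces: the substituted sum becomes
$$
\sum_{z \in [n+1],\, z_j \geq 1} \frac{z_j}{n+1}\, \frac{(n+1)!}{\prod_\ell z_\ell!} \prod_\ell p_\ell^{z_\ell}\, f(z - e_j),
$$
and since the summand vanishes when $z_j = 0$, the constraint $z_j \geq 1$ can be dropped to sum over all of $[n+1]$. Recognizing this as $\EE\bigl[\frac{Y^{(n+1)}_j}{n+1} f(Y^{(n+1)} - e_j)\bigr]$ completes the proof. The argument is essentially a clean reindexing; the only genuine care required is in the factorial bookkeeping of the prefactor step and in confirming that the boundary term $z_j = 0$ causes no trouble.
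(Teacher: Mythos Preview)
Your proposal is correct and follows essentially the same approach as the paper's own proof: a direct expansion of the left-hand expectation, the change of variable $y \mapsto y + e_j$ (the paper writes it as $y_z \to y_z + 1$), the factorial bookkeeping yielding the factor $z_j/(n+1)$, and the observation that the $z_j = 0$ term can be freely included since it vanishes. The two arguments are the same in substance and in the order of steps.
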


\begin{proof}
  For the sake of notations, we will write the result in terms of an index
  $z \in \{1, \ldots, k\}$ instead of $j$, and we introduce the set
  $
  J^k_z = \{ 1,\ldots,z-1,z+1,\ldots, k\}.
  $
  By definition of the expectation with respect to a multinomial distribution, and using the change of variable $y_{z} \to y_{z}+1$ in a vector $y \in [n]$, we have that
  \begin{align*}
    \hspace{-1.5em}
    \EE
    \left[ p_z f(Y^{(n)}) \right]
    &=
    \sum_{y_1=0}^{+\infty}
    \ldots
    \sum_{y_z=0}^{+\infty}
    \ldots
    \sum_{y_k=0}^{+\infty}
    \left(
    p_z f(y)
    \frac{n!}{\prod_{j=1}^k y_{j}!}
    \prod_{j=1}^k p_{j}^{y_{j}}
    \right)
    {\mathds{1}}_{y \in [n]}(y)
    \\
    &=
    \sum_{y_1=0}^{+\infty}
    \ldots
    \sum_{y_z=1}^{+\infty}
    \ldots
    \sum_{y_k=0}^{+\infty}
    \left(
    p_z f(y-e_z)
    \frac{n!}{(y_{z} - 1)! \prod_{j\in J^k_z} y_{j}!}
    p_{z}^{y_{z} - 1} \prod_{j\in J^k_z} p_{j}^{y_{j}}
    \right)
    \\
    & \CM{ \times
    {\mathds{1}}_{y \in [n+1]}(y)}
    \\
    &=
    \sum_{y_1=0}^{+\infty}
    \ldots
    \sum_{y_z=1}^{+\infty}
    \ldots
    \sum_{y_k=0}^{+\infty}
    \left(
     f(y-e_z)
    \frac{y_z n!}{y_z! \prod_{j\in J^k_z} y_{j}!}
    p_{z}^{y_{z}} \prod_{j\in J^k_z} p_{j}^{y_{j}}
    \right)
    {\mathds{1}}_{y \in [n+1]}(y)
    \\
    &=
    \sum_{y_1=0}^{+\infty}
    \ldots
    \sum_{y_z=1}^{+\infty}
    \ldots
    \sum_{y_k=0}^{+\infty}
    \left(
    y_z f(y-e_z)
    \frac{n!}{\prod_{j = 1}^k y_{j}!}
    \prod_{j = 1}^k p_{j}^{y_{j}}
    \right)
    {\mathds{1}}_{y \in [n+1]}(y)
    \\
    &=
    \sum_{y_1=0}^{+\infty}
    \ldots
    \sum_{y_z=0}^{+\infty}
    \ldots
    \sum_{y_k=0}^{+\infty}
    \left(
    \frac{y_z}{n+1} f(y-e_z)
    \frac{(n+1)!}{\prod_{j = 1}^k y_{j}!}
    \prod_{j = 1}^k p_{j}^{y_{j}}
    \right)
    {\mathds{1}}_{y \in [n+1]}(y)
    \\
    &=
    \EE \left[
      \frac{Y^{(n+1)}}{n+1} f(Y^{(n+1)}-e_z)
      \right],
  \end{align*}
  which completes the proof.
\end{proof}

Now,  let us introduce the notation $\bY^{(n_1, \ldots, n_m)} = \bY$ for a data matrix whose rows are independently sampled from the multinomial distribution \eqref{eq:modMulti} with parameters $p_i$ and $n_i$ for rows $1 \leq i \leq m$. By definition of the expected KL risk \eqref{eq:MKLAMulti} and using Theorem \ref{theo:mainMulti}, we have that (using the notation Const.\ to denote the constant term $ \sum_{i=1}^m  \sum_{j=1}^{k} p_{ij}\log {p}_{ij}$ not depending on the estimator $\hat{\bX}$)
\begin{align*}
  \hspace{-1.5em}
\MKLA(\hat{\bX},\bX) &\mathop{=}^{\text{ Multinomial }}  \sum_{i=1}^m  \EE\left[\sum_{j=1}^{k}
    p_{ij}
    \log \frac{{p}_{ij}}{\hat{p}_{ij}(\bY^{(n_1, \ldots, n_m)})}
    \right]
  \\
  &
  = - \sum_{i=1}^m  \EE\left[ \sum_{j=1}^{k}
    p_{ij}
    \log \hat{p}_{ij}(\bY^{(n_1, \ldots, n_m)})
    \right]
  +
  \text{Const.}
  \\
  &
  = - \sum_{i=1}^m \EE\left[ \sum_{j=1}^{k}
    \frac{Y^{(n_1, \ldots, n_i+1, \ldots, n_m)}_{ij}}{n_i+1}
    \log \hat{p}_{ij}(\bY^{(n_1, \ldots, n_i+1, \ldots, n_m)} - \bE_{ij})
    \right]
    \\ & \CM{
  +
  \text{Const.}}
  \\
  &
  =
  -\EE\left[
    \sum_{i=1}^m \frac{1}{n_i+1} \sum_{j=1}^{k}
    Y_{ij}^{(n_1, \ldots, n_i+1, \ldots, n_m)}
    \log \hat{p}_{ij}(\bY^{(n_1, \ldots, n_i+1, \ldots, n_m)} - \bE_{ij})
    \right]
    \\ & \CM{
  +
  \text{Const.}}
\end{align*}
Therefore, \CM{in the multinomial case,} one may construct \CM{ an unbiased (up to a constant term) }\CB{estimate of the  KL risk}, which satisfies \eqref{eq:argminMUKLA}, as follows
\begin{align}
\MUKLA(\hat{\bX}) \mathop{=}
-\sum_{i=1}^m
\frac{1}{n_i+1}
\sum_{j=1}^{k}
Y_{ij}^{(n_1, \ldots, n_i+1, \ldots, n_m)}
\log \hat{p}_{ij}(\bY^{(n_1, \ldots, n_i+1, \ldots, n_m)} - \bE_{ij}) \label{eq:estimMUKLAMulti}
\end{align}

\CB{However, $\MUKLA(\hat{\bX})$ is not truly an estimator as it cannot be computed from the data. Indeed, at first glance, it requires simulating $\bY^{(n_1, \ldots, n_i+1, \ldots, n_m)}$  and thus knowing $\bP$. Moreover, computing $\MUKLA(\hat{\bX})$ requires to solve $m$ times the variational problem \eqref{eq:variational} for each ``data matrix'' $\bY^{(n_1, \ldots, n_i+1, \ldots, n_m)}$ with $1 \leq i \leq m$.  Therefore, for moderate to large values of the number $m$ of rows, this is not feasible as the computational cost becomes prohibitive. A natural question is thus the possibility of using such a quantity  in numerical experiments for the purpose of approximating the expected KL risk in real data analysis}. Hence, we propose to rather consider the following estimator
\begin{align}
\widehat{\MUKLA}(\hat{\bX}) =
-\sum_{i=1}^m
 \frac{1}{n_i}
\sum_{j=1}^{k}
Y_{ij}
\log \hat{p}_{ij}(\bY - \bE_{ij}), \label{eq:estimMUKLAMultiapprox}
\end{align}
as an approximation of the (truly) unbiased \CB{quantity} $\MUKLA(\hat{\bX})$ of the expected KL risk, by replacing in expression \eqref{eq:estimMUKLAMulti} the modified data matrices $\bY^{(n_1, \ldots, n_i+1, \ldots, n_m)}$ by the true data matrix $\bY$ and the ratio $ \frac{1}{n_i+1}$ by $\frac{1}{n_i}$  for all $1 \leq i \leq n$. Nevertheless, as in the Poisson case, it is difficult to numerically   evaluate  the right-hand side of  \eqref{eq:estimMUKLAMultiapprox} as  calculating $\log \hat{p}_{ij}(\bY - \bE_{ij})$ for all $1 \leq i \leq m$ and $1 \leq j \leq k$ is not computationally feasible.  Fast numerical methods to approximate $\widehat{\MUKLA}(\hat{\bX})$ are thus introduced  in Section \ref{sec:construction}.

\CM{From Theorem \ref{theo:mainMulti} and as confirmed by numerical experiments, it appears} that $\widehat{\MUKLA}(\hat{\bX})$, with $\hat{\bX} = f(\bY)$, is rather an approximately unbiased estimation of the expected normalized KL risk (up to constant terms) of the mapping $f$ evaluated at a data matrix that is slightly different from $\bY$. Indeed, recall that the true data matrix is
$$
\bY = \bY^{(n)} = \bY^{(n_1, \ldots, n_m)},
$$
and let us introduce (with some abuse of notation) the modified data matrix
$$
\bY^{(n-1)} = \bY^{(n_1-1, \ldots, n_m-1)}.
$$
Going back to the survey example, we may consider that the observed data (original survey) is denoted by $\bY^{(n)}$ (with  $n_i \geq 1$ for all $i$'s), and that we remove one individual  (picked arbitrarily but who has marked all products), to obtain the new data matrix $\bY^{(n-1)}$. In our numerical experiments, we shall use the data $\bY = \bY^{(n)}$ for  calculating $\widehat{\MUKLA}(\hat{\bX}^{(n)})$ for $\hat{\bX}^{(n)} := f(\bY^{(n)}) $ but its {\it expected value} has to be compared to the expected normalized KL risk (up to constant terms) of the estimator $\hat{\bX}^{(n-1)} = f(\bY^{(n-1)})$. \CM{Indeed, thanks to Equation \eqref{eq:SURE_Multi}, one has that
\begin{equation}
 \E \left[ \frac{ Y_{ij} }{n_i}
\log \hat{p}_{ij}(\bY - \bE_{ij}) \right]  =   \EE\left[p _{ij} \log  \hat{p}_{ij}(\bY^{(n-1)}),    \right] \label{eq:SURE_Multi2}
\end{equation}
and therefore, we  obtain  that  the following  relation holds
\begin{equation}
 \E \left[ \widehat{\MUKLA}(\hat{\bX}^{(n)}) \right] = \MKLA(\hat{\bX}^{(n-1)},\bX) - \sum_{i=1}^m  \sum_{j=1}^{k} p_{ij}\log {p}_{ij}. \label{eq:approxunbiased}
\end{equation}
Consequently, $\widehat{\MUKLA}(\hat{\bX}^{(n)})$ is an unbiased estimator of the expected KL risk  $\MKLA(\hat{\bX}^{(n-1)} ,\bX)$ (up to a constant term) of the estimator $\hat{\bX}^{(n-1)}$.  
Nevertheless, from our using numerical experiments, we have found that for moderate to large values of the number of observations $n$, the expected KL risks $\MKLA(\hat{\bX}^{(n-1)} ,\bX)$  and $\MKLA(\hat{\bX} ,\bX)$ (with $\hat{\bX} = \hat{\bX}^{(n)}$) are very close.  Therefore, one has that  $\widehat{\MUKLA}(\hat{\bX})$ is  (up to a constant term)  an approximately unbiased estimator of the true expected KL risk  $\MKLA(\hat{\bX} ,\bX)$. For small values of $n$, we shall use Monte Carlo simulations from the multinomial model \eqref{eq:modMulti} to numerically approximate the expected KL risks $\MKLA(\hat{\bX}^{(n-1)} ,\bX)$ and $\MKLA(\hat{\bX}^{(n)} ,\bX)$, and to confirm that Equation \eqref{eq:approxunbiased} holds. \\
}
The following proposition indicates how we may always construct the modified matrix  $\bY^{(n-1)}$ from the rows of the data matrix $\bY^{(n)}$.

\begin{prop} \label{prop:YnYnplusone}
  Let $Y^{(n)} \in \NN^k$ be a random vector sampled from the multinomial distribution \eqref{eq:defMulti} with parameters  $p = (p_j)_{j=1, \ldots, k}$ and $n  \in \NN_*$. Then, the following equality (in distribution)  holds between $Y^{(n-1)}$ and $Y^{(n)}$
  \begin{align}
    &Y^{(n-1)} =
    Y^{(n)} - e_{\bj}
  \end{align}
where the index $\bj$ is randomly picked (conditionally on $Y^{(n)}$) among elements in  $\{1,\ldots, k\}$  with probabilities
$$
\hat{p}^{ML}_j = \frac{Y^{(n)}_j}{\sum_{q=1}^k Y^{(n)}_q} = \frac{Y^{(n)}_j}{n}  \quad \mbox{for} \quad 1 \leq j \leq k.
$$
\end{prop}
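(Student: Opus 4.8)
The plan is to prove the asserted equality in distribution by testing against arbitrary bounded measurable functions and reducing everything to the shifting identity \eqref{eq:SURE_Multi} already established in Theorem \ref{theo:mainMulti}. Concretely, since $Y^{(n-1)}$ takes values in $[n-1]$, it suffices to show that for every bounded measurable $g : \RR^k \to \RR$,
\begin{equation*}
\E\left[ g(Y^{(n-1)}) \right] = \E\left[ g(Y^{(n)} - e_{\bj}) \right],
\end{equation*}
where, conditionally on $Y^{(n)}$, the index $\bj$ is drawn with $\P(\bj = j \mid Y^{(n)}) = Y^{(n)}_j / n$. First I would rewrite the right-hand side by conditioning on $Y^{(n)}$ and integrating out $\bj$, which turns it into $\E\left[ \sum_{j=1}^k \frac{Y^{(n)}_j}{n}\, g(Y^{(n)} - e_j) \right]$.

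The key step is to recognize each summand as the right-hand side of \eqref{eq:SURE_Multi} read with $n$ replaced by $n-1$: applying Theorem \ref{theo:mainMulti} to the multinomial vector $Y^{(n-1)}$ (whose ``$n+1$'' version is exactly $Y^{(n)}$) gives, for each $j$,
\begin{equation*}
\E\left[ \frac{Y^{(n)}_j}{n}\, g(Y^{(n)} - e_j) \right] = \E\left[ p_j\, g(Y^{(n-1)}) \right].
\end{equation*}
Summing over $j$ and pulling $g(Y^{(n-1)})$ out of the $p_j$-weighted sum, the normalization $\sum_{j=1}^k p_j = 1$ collapses the right-hand side to $\E[ g(Y^{(n-1)}) ]$, which is precisely the desired identity. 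This is the entire content of the argument: the conditioning produces the $Y^{(n)}_j/n$ weights, Theorem \ref{theo:mainMulti} converts each of them into a $p_j$ weight attached to $Y^{(n-1)}$, and $\sum_j p_j = 1$ finishes.

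An alternative that avoids invoking the theorem is a direct computation of the probability mass function, which I would include if a self-contained argument is preferred: for fixed $y' \in [n-1]$, write
\begin{equation*}
\P\!\left(Y^{(n)} - e_{\bj} = y'\right) = \sum_{j=1}^k \P\!\left(Y^{(n)} = y' + e_j\right)\cdot \frac{y'_j + 1}{n},
\end{equation*}
substitute the multinomial mass function, simplify using $(y'_j+1)/(y'_j+1)! = 1/y'_j!$ and $n!/n = (n-1)!$, and factor out $\sum_{j} p_j = 1$; the result is exactly the multinomial$(n-1,p)$ mass function at $y'$.

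I do not anticipate a genuine obstacle, as the statement is essentially the reverse reading of Theorem \ref{theo:mainMulti}. The only point requiring mild care is the boundary case $n = 1$, where $Y^{(n-1)} = Y^{(0)}$ is the degenerate all-zero multinomial and the theorem's parameter range $\NN_*$ formally excludes applying it at $0$; this case is handled by direct inspection (removing the single observed unit returns the zero vector almost surely) or simply by the mass-function computation above, which remains valid for all $n \geq 1$.
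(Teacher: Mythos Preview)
Your proof is correct, and it takes a different route from the paper's. The paper argues via a latent-variable construction: it writes $Y^{(n)} = \sum_{q=1}^n \tilde{Y}^{(1),q}$ with i.i.d.\ categorical summands, observes that removing any one of them gives $Y^{(n-1)}$, and then checks that a summand $\tilde{Y}^{(1),\bq'}$ picked uniformly has the same law as $e_{\bj}$ with $\bj$ drawn from the empirical proportions. Your main approach instead reads the identity \eqref{eq:SURE_Multi} of Theorem~\ref{theo:mainMulti} backward (with $n$ shifted to $n-1$), sums over $j$, and uses $\sum_j p_j = 1$; this makes the proposition an almost immediate corollary of the theorem and avoids any latent variables. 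Your alternative PMF computation is also valid and entirely self-contained.

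What your approach buys is economy and rigor: once Theorem~\ref{theo:mainMulti} is in hand, the proposition drops out in three lines, and you never need to discuss joint laws of $(Y^{(n)}, \text{removed term})$. What the paper's construction buys is intuition---it explains \emph{why} removing a count proportionally to the observed frequencies is the right thing to do, namely because it mimics deleting one of the underlying categorical draws uniformly at random. Your observation that the $n=1$ boundary case requires separate (trivial) handling when invoking the theorem is correct; the direct PMF route you sketch covers all $n\ge 1$ uniformly.
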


\begin{proof}
  Let $\tilde{Y}^{(1),q} \in \NN^k$, $1 \leq q \leq n$, be $n$ (latent) iid multinomial random variables
  with parameters  $p = (p_j)_{j=1, \ldots, k}$. Then, we may decompose any random vector $Y^{(n)} \in \NN^k$  sampled from the multinomial distribution \eqref{eq:defMulti} as
  \begin{align}
    Y^{(n)} = \sum_{q=1}^n \tilde{Y}^{(1),q} \label{eq:sum_n}
  \end{align}
Now, for any $q' \in \{1,\ldots,n\}$ (possibly random but chosen independently from $Y^{(n)}$), we can write $Y^{(n-1)}$ as
  \begin{align}
    Y^{(n-1)} = \sum_{\substack{1 \leq q \leq n \\ q\ne q'}} \tilde{Y}^{(1),q} = Y^{(n)} - \tilde{Y}^{(1),q'} \label{eq:sum_nminusone}
  \end{align}
Therefore, we have to remove an arbitrarily chosen random variable $\tilde{Y}^{(1),q'}$ from the sum \eqref{eq:sum_n}. We could choose to pick $q'$ uniformly at random in $\{1,\ldots,n\}$. However, in practice, we clearly do not have access to the observations of $\tilde{Y}^{(1),q'}$ in the decomposition  \eqref{eq:sum_n}, and such a procedure appears to be not computationally feasible at a first glance. Nevertheless, it is clear that  for any $q'$, there exists a random integer  $q \in \{1,\ldots,n\}$ such that $Y^{(1),q'} = e_{q}$ in distribution.

Let us assume that $\bq$ is an integer that is randomly picked (conditionally on $Y^{(n)}$) among elements in  $\{1,\ldots, k\}$  with probabilities
$
\frac{Y^{(n)}_j}{n}
$
for $1 \leq j \leq k$. By conditioning with respect to the distribution of $Y^{(n)}$, it follows that\CM{, for $1 \leq j \leq k$,}
$$
\PP \left(e_{\bq} = e_j \right) = \EE \left[ \PP \left(e_{\bq} = e_j | Y^{(n)}   \right) \right] = \EE \left[ \PP \left( \bq = j | Y^{(n)}   \right) \right] = \EE \left[ \frac{Y^{(n)}_j}{n} \right] = p_j.
$$
Now, if $\bq'$ denotes an integer that is  chosen uniformly at random in $\{1,\ldots,n\}$ (and independently from $Y^{(n)}$), then, by conditioning with respect to the distribution of $\bq'$, one has that
$$
 \PP \left( Y^{(1),\bq'} = e_{j} \right)  =  \frac{1}{n} \sum_{q = 1}^{n}   \PP \left( Y^{(1),q} = e_{j} \right) = \frac{1}{n} \sum_{q = 1}^{n}  p_j = p_j  \quad \mbox{for} \quad 1 \leq j \leq k
$$
Therefore, $Y^{(1),\bq'} = e_{\bq}$ in distribution, and thus the decomposition \eqref{eq:sum_nminusone} allows to complete the proof.
\end{proof}
Hence, thanks to Proposition \ref{prop:YnYnplusone}, it is now clear that the following equality holds (in distribution)
\begin{equation}
\bY^{(n-1)} = \bY^{(n)}  -   \left[\begin{array}{c} e_{\bj_1}^{t}  \\ \vdots \\ e_{\bj_m}^{t} \end{array}\right] \label{eq:YnYnplusone}
\end{equation}
where, for each $1 \leq i \leq m$, the index $\bj_{i}$ is randomly picked (conditionally on $\bY^{(n)}$) among elements in  $\{1,\ldots, k\}$  with probabilities
$
\frac{Y^{(n)}_{ij}}{n_{i}}
$
for $1 \leq j \leq k$. Therefore, the relationship \eqref{eq:YnYnplusone} allows to easily compute the estimator $\hat{\bX}^{(n-1)} = f(\bY^{(n-1)})$ for the purpose of comparing its KL risk to $\widehat{\MUKLA}(\hat{\bX}^{(n)})$ in the numerical experiments.

{\color{black}
\subsubsection{Comparison with cross-validation} \label{section:CV}

The procedure that we propose to estimate the KL risk using $\widehat{\MUKLA}(\hat{\bX})$ can be thought of being reminiscent of the leave-one-out cross-validation method. Indeed, in the formula  \eqref{eq:estimMUKLAMultiapprox} that defines $\widehat{\MUKLA}(\hat{\bX})$, the quantity $\bY^{(i,j)} :=\bY - \bE_{ij}$ may be interpreted as a new data matrix from which one observed count has been removed from the $(i,j)$-th entry. The estimator $\widehat{\MUKLA}(\hat{\bX})$ is then defined as a weighted sum of  $m \times k$ evaluations of the estimator  $\hat{\bX}$  at the new  data matrices $\bY^{(i,j)}$ for $1 \leq i \leq m$ and $1 \leq j \leq k$.  Moreover, recalling that $\hat{p}^{ML}$ denotes the maximum likelihood estimator \eqref{eq:MLestimator}, the following equality 
\begin{equation}
\widehat{\MUKLA}(\hat{\bX}) + \sum_{i=1}^m
\sum_{j=1}^{k}  \hat{p}^{ML}_{i,j}  \log \hat{p}^{ML}_{i,j}  =
\sum_{i=1}^m \sum_{j=1}^{k}
\hat{p}^{ML}_{i,j} 
\log  \frac{\hat{p}^{ML}_{i,j}  }{\hat{p}_{ij}(\bY - \bE_{ij})} \label{eq:UKLAML}
\end{equation}
shows that this way of aggregating the risk of the estimator $\hat{\bX}$  evaluated at the modified data matrices $\bY^{(i,j)}$ with respect the ML estimator $\hat{p}^{ML}$   is different from standard $K$-fold validation  in the multinomial case as recently proposed in \cite{Cao17}. Indeed, if $\bY$ denotes the full data matrix, then the $K$-fold cross-validation (CV) procedure from  \cite{Cao17}  is as follows:

\begin{description}
\item[Step 1:]  randomly split the rows of $\bY$ into two groups of size $m_1 = \lfloor \frac{K-1}{K} m \rfloor$ and $m_2 = m - m_1$ for a total of $L$ times
\item[Step 2:] for $1 \leq \ell \leq L$, denote by $I_{\ell}$ and $I_{\ell}^c$ the row index sets of the two groups, respectively, for the $\ell$-th split. For each $i \in I_{\ell}^c$, one further randomly selects a subset $J_{i,\ell} \subset \{1,\dots,k\}$ of columns indices  with cardinality $k_1 = \lfloor \frac{K-1}{K} k \rfloor$.
\item[Step 3:] for each $\ell$-th split, the training set of indices is defined as 
$$
\Gamma_{\ell} = \{ (i,j) \; : \;  (i,j) \in I_{\ell} \times \{1,\ldots,k\} \mbox{ or } i \in I_{\ell}^c, j \in  J_{i,\ell}  \}  
$$
\CM{such that $\Gamma_{\ell} \subset \{1,\ldots,m\} \times \{1,\ldots,k\} $} corresponds to both complete and incomplete rows. Then, denote $\bY^{(\ell)}$ as the training data matrix such that $\bY^{(\ell)}_{i,j} = \bY_{i,j}$ for all $(i,j) \in \Gamma_{\ell}$ and $\bY^{(\ell)}_{i,j} = 0$ for $(i,j) \notin \Gamma_{\ell}$
\item[Step 4:] for a given estimator $\hat{\bX}$, the resulting $K$-fold CV criteria is then defined as the following prediction error (for the KL risk) on the rows corresponding to indices in $(I_{\ell}^c)_{1 \leq \ell \leq L}$
\begin{equation}
\widehat{\CV}(\hat{\bX}) := \frac{1}{L} \sum_{\ell = 1}^{L} \sum_{i \in I_{\ell}^c} \sum_{j=1}^{k}  \hat{p}^{ML}_{ij}  \log \frac{\hat{p}^{ML}_{ij}}{\hat{p}_{ij}(\bY^{(\ell)})} \label{eq:CV}
\end{equation}
with the usual convention that $u \log(u) = 0$ for $u = 0$.
\end{description}

Comparing, equalities \eqref{eq:UKLAML} and \eqref{eq:CV} it appears that our approach can be interpreted as a kind of leave-one-out cross-validation method where the value of the $(i,j)$-th entry  of the estimator $\hat{\bX}$ (normalized by $n_{i}^{-1}$) obtained from the new data matrix $\bY^{(i,j)}$ is compared to  the $(i,j)$-th entry of the maximum likelihood estimator $\hat{p}^{ML}$. Nevertheless,  for $1 \leq i \leq m$ and $1 \leq j \leq k$,  the new data matrix $\bY^{(i,j)}$ is constructed by removing only one observed count at the $(i,j)$-th entry of $\bY$, and not by setting $\bY^{(i,j)}_{i,j} = 0$ and $\bY^{(i,j)}_{i',j'} = \bY_{i',j'}$ for $(i',j') \neq (i,j)$.

It is clearly beyond the scope of this paper to determine (from a theoretical point of view) if the cross-validation criteria  \eqref{eq:CV} might lead to a consistent estimation of the KL risk, by showing for example that it is unbiased. Nevertheless, for the various numerical experiments carried out in this paper, we report results on the estimation of the KL risk using either $\widehat{\MUKLA}(\hat{\bX})$ or $\widehat{\CV}(\hat{\bX})$ to compare their performances on the analysis of simulated and real data.
}

\subsubsection{A simple class of estimators}

To illustrate the above discussion on the construction of unbiased estimator of the expected KL risk from multinomial data, let us consider a simple class of estimators $\hat{p}^{w}(\bY)$ of the row stochastic matrix $\bP$ whose $(i,j)$-th entry is defined by
\begin{align}
\hat{p}^{w}(\bY)_{ij}
&= \frac{1}{k} + w\frac{Y_{ij}^{+} - \frac{1}{k}\sum_{j'=1}^k Y_{ij'}^{+}}{\epsilon + \sum_{j'=1}^k Y_{ij'}^{+}}, \mbox{ where } Y_{ij}^{+} = \max(0,Y_{ij}),
\end{align}
that is parameterized by a threshold  $0 \leq w \leq 1$ (shrinkage parameter playing the role of regularization) that we wish to select in a data-driven way, and $\epsilon > 0$ is a fixed (but small) constant to take into account possible very low values of the sum $\sum_{j'=1}^k Y_{ij'}$. The estimator $\hat{p}^{w}(\bY) = (\hat{p}^{w}(\bY)_{ij})$ defined in this manner  is a row stochastic matrix for all data matrix including  the matrix $\bY-\bE_{ij}$ whose $(i,j)$-th entry may take the negative value $-1$. Indeed, by simple calculations, one has that
\begin{align}
\sum_{j=1}^k
\hat{p}^{w}(\bY)_{ij}
= 1 + w\frac{\sum_{j=1}^k Y_{ij}^{+}  - \sum_{j'=1}^k Y_{ij'}^{+} }{\epsilon + \sum_{j'=1}^k Y_{ij'}^{+} } = 1
\end{align}
In the setting where $n_i = 0$ or $w = 0$, then one has that $\hat{p}^{w}(Y)_{ij} = \frac{1}{k}$ which corresponds to an estimator with a large bias and a low variance.
To the contrary, when $n_i > 0$ but small and $w \approx 1$, the estimator has a low bias but a large variance. For $w=1$, this estimator essentially corresponds to the estimation \eqref{eq:zr} with zero-replacement in the data matrix $\bY$. Moreover, we can informally remark that, as $n_i \to \infty$, then
$\hat{p}^{w}(\bY)_{ij} \to \frac{1-w}{k} + w p_{ij}$
(asymptotic bias for $\omega > 0$ but no variance). Thus, this suggests that for large values of the $n_i$'s and   $w \approx 1$, we should have that $\hat{p}^{w}(Y)_{ij} \approx p_{ij}$
(asymptotically no bias and no variance). Therefore, when all the $n_i$'s are large, we expect that a value of $w$  close to $1$ should be an optimal choice, while for small values of the $n_i$'s we need to pick $0 \leq w < 1$ in a data-driven way.

Then, we obviously have
\begin{align}\label{eq:simple_minus_1}
  \hat{p}^{w}_{ij}(\bY - \bE_{ij})
  =
  \left\{\begin{array}{ll}
    \frac{1}{k} + w\frac{Y_{ij} - \frac{1}{k}\sum_{j'=1}^k Y_{ij'} }{\epsilon + \sum_{j'=1}^k Y_{ij'}  }, & \mbox{ if }  Y_{ij} = 0 \\
    \frac{1}{k} + w\frac{Y_{ij} - \frac{1}{k}\sum_{j'=1}^k Y_{ij'} + \frac{1-k}{k}}{\epsilon + \sum_{j'=1}^k Y_{ij'} - 1 }, & \mbox{ otherwise }
  \end{array}\right.
\end{align}
and thus, for this class of estimator, it is immediate to compute an estimate of the expected KL risk $\MKLA(\hat{\bX}^{(n)}_{w} ,\bX)$, where $\hat{\bX}^{(n)}_{w} =  \diag(n_1,n_2,\ldots,n_m) \hat{\bP}^{(n)}_{w}(\bY) $ and $\hat{\bP}^{(n)}_{w} = (\hat{p}^{w}(\bY)_{ij})$, using the approximation  \eqref{eq:estimMUKLAMultiapprox}.

To illustrate the construction of unbiased estimator of the expected KL risk  (as discussed in Section \ref{sec:unbiasedKL}) for this simple class of estimators, we consider simulated data sampled from the multinomial model \eqref{eq:modMulti} using the true composition $m \times k$ matrix $\bP$ whose  parametrization though the mapping $\Omega$ is displayed in  Figure \ref{fig:naive}(a) with $m=50$ and $k=50$. To generate the number of observed values for each row of the data matrix, we sample independent realizations $n_1,\ldots,n_m$ from a Poisson distribution with intensity $n_0 = 10$, and {\it those values are held fixed} when sampling a data matrix $\bY$ from the multinomial model \eqref{eq:modMulti}. An example of estimation of $\Omega^{-1}(\bP)$ using the shrinkage estimator $\hat{\bP}^{(n)}_{w}$ with $w=1$  is  displayed in Figure \ref{fig:naive}(b). Then, in Figure \ref{fig:naive}(c), we report numerical results, for different $w \in [0,1]$, on the comparison of the values of the quantity $\widehat{\MUKLA}(\hat{\bX}^{(n)}_{w} )$ to those of the normalized KL risk, as defined in \eqref{eq:normKLrisk}, of either $\hat{\bX}^{(n)}_{w} $ or
$$
\hat{\bX}^{(n-1)}_{w}  =  \diag(n_1-1,n_2-1,\ldots,n_m-1) \hat{\bP}^{(n)}_{w}(\bY^{(n-1)}),
$$
where the modified data matrix $\bY^{(n-1)}$ is given by  \eqref{eq:YnYnplusone}.  Moreover, to stress the importance of replacing in expression \eqref{eq:estimMUKLAMulti} the ratio $ \frac{1}{n_i+1}$ by $\frac{1}{n_i}$, we also report results on the numerical evaluation of the quantity
\begin{align}
\widetilde{\MUKLA}(\hat{\bX}) =
-\sum_{i=1}^m
 \frac{1}{n_i+1}
\sum_{j=1}^{k}
Y_{ij}
\log \hat{p}_{ij}(\bY - \bE_{ij}). \label{eq:estimMUKLAMultiapprox2}
\end{align}

 In Figure \ref{fig:naive}(d), we show the results of Monte Carlo simulations to approximately evaluate the expected values of the quantities  $\KLA(\hat{\bX}^{(n-1)}_{w},\bX)$, $\KLA(\hat{\bX}^{(n)}_{w},\bX)$, $\widehat{\MUKLA}(\hat{\bX}^{(n)}_{w})$ and $\widetilde{\MUKLA}(\hat{\bX}^{(n)}_{w})$.  These numerical results illustrate that the relation \eqref{eq:approxunbiased}  clearly holds, and that $\widehat{\MUKLA}(\hat{\bX}^{(n)}_{w})$ is a relevant estimator of $\MKLA(\hat{\bX}^{(n-1)}_{w},\bX)$ to choose the threshold $w$ in a data-driven way. {\color{black} Finally, in Figure \ref{fig:naive}(e-f), we report results on the evaluation of the KL risk using  $K$-fold cross-validation as described in Section \ref{section:CV}. For $K \in \{2,5,10\}$,  it appears that $\widehat{\CV}(\hat{\bX}^{(n)}_{w} )$ and its expected value are not consistent estimator of the KL risk. In particular the curves $w \mapsto \widehat{\CV}(\hat{\bX}^{(n)}_{w})$  and $w \mapsto \KLA(\hat{\bX}^{(n)}_{w},\bX)$  do not take their minimum at neighboring values of $w$. Indeed, minimizing the KL risk leads to take $\hat{w} \approx 0.4$, while minimizing the CV criterion tends to select a much larger value of $\omega$ (for $K=5$ and $K=10$ the selected value is $\hat{w} \approx 1$, and $\hat{w} \approx 0.75$ for $K=2$ ).}

\begin{figure}[htbp]
  \centering
  \subfigure[Underlying matrix $\Omega^{-1}(\bP)$]{\includegraphics[width=.3\linewidth]{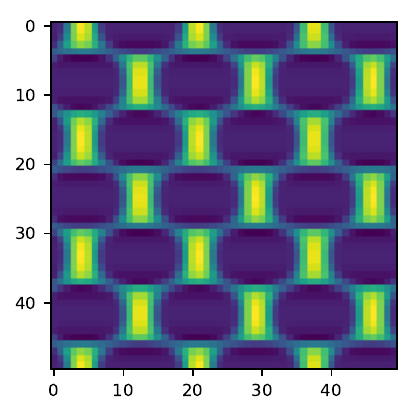}}
  \subfigure[ $\Omega^{-1}(\hat{\bP}^{(n)}_{w})$ with $w=1$]{\includegraphics[width=.3\linewidth]{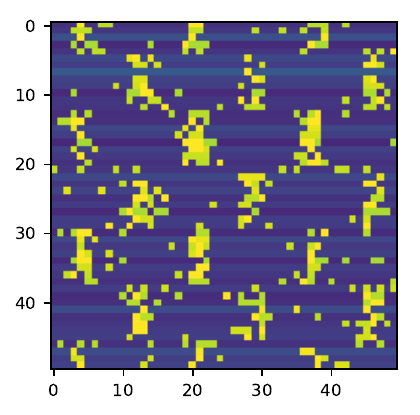}}
  \subfigure[ $\Omega^{-1}(\hat{\bP}^{(n)}_{w})$ with $w=0.4$]{\includegraphics[width=.3\linewidth]{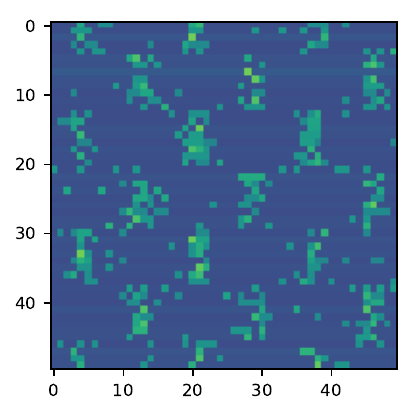}}

  \subfigure[KL risks and their estimates using the data $\bY$]{\includegraphics[width=.45\linewidth]{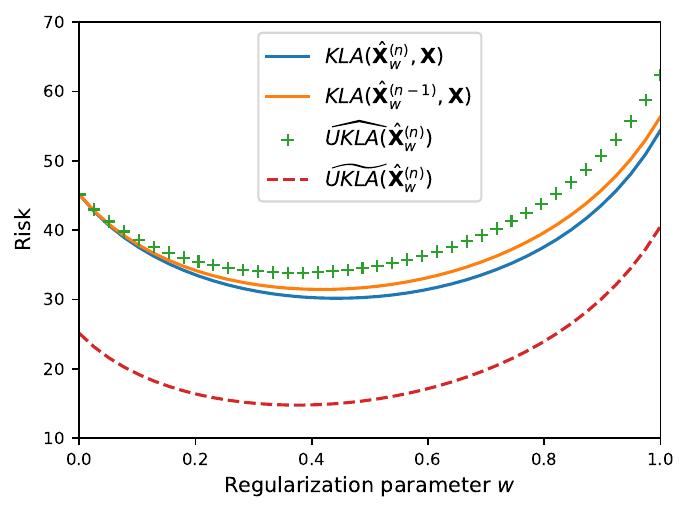}}
  \subfigure[Expected KL risks using Monte Carlo repetitions]{\includegraphics[width=.45\linewidth]{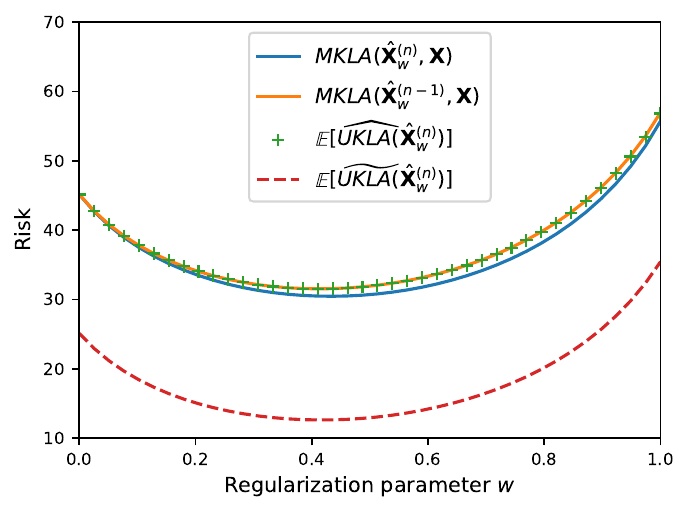}}
  
    \subfigure[{\color{black} KL risk and its estimation using $K$-fold CV}]{\includegraphics[width=.45\linewidth]{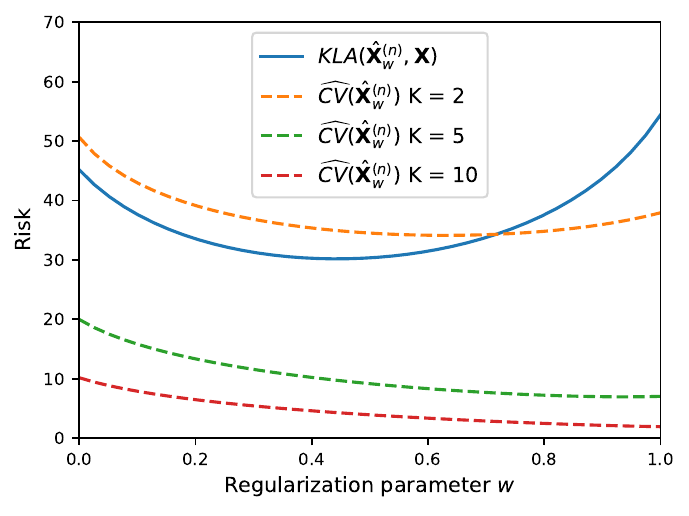}}
  \subfigure[{\color{black} Expected values (using Monte Carlo repetitions) of KL risk and estimators based on $K$-fold CV}]{\includegraphics[width=.45\linewidth]{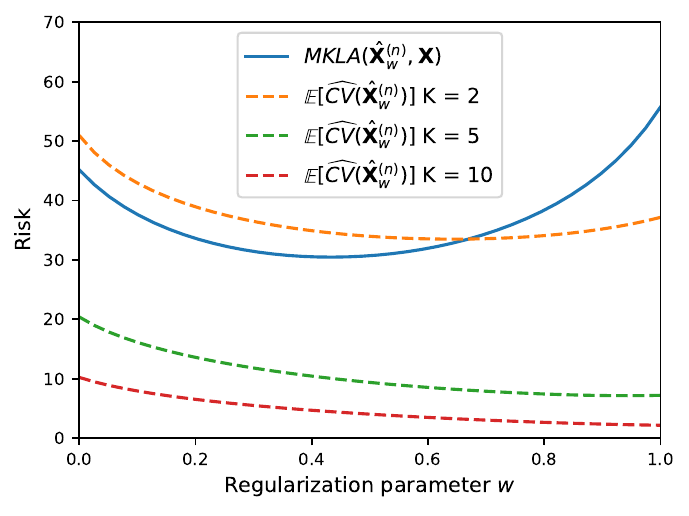}}
  
  \caption{Results on the estimation of KL risks {\color{black} using either generalized SURE or cross-validation (with $L=20$)} for the simple class of estimator $\hat{\bX}^{(n)}_{w}$ with simulated data $\bY$ sampled from  model \eqref{eq:modMulti} (with $m=50$, $k=50$ and $n_0=10$) and $w$ ranging in the interval $[0,1]$. For readability,  the constant term $ \sum_{i=1}^m  \sum_{j=1}^{k} p_{ij}\log {p}_{ij}$ has been added to all estimators of the KL risk {\color{black} based on generalized SURE. }}
  \label{fig:naive}
\end{figure}

\section{Construction of low rank estimators with their risk estimators} \label{sec:construction}

In this section, we discuss the numerical approximation of the expected KL risk of low rank estimators $\hat{\bX}_{\lambda} = \Omega(\hat{\bZ}_{\lambda}) $ where $\hat{\bZ}_{\lambda}$ is obtained by solving the variational problem \eqref{eq:variational} and $\Omega$ is the mapping defined by either \eqref{eq:OmegaPoisson} in the Poisson case or \eqref{eq:OmegaMulti} in the multinomial case.

\subsection{Optimization of the variational problem}

\subsubsection{The algorithm}\label{sec:algo}

Problem \eqref{eq:variational} can be recast as
\begin{align}
  &
  \uargmin{
    \bZ \in \RR^{m \times k}
  }
  \left\{
  E(\bZ)
  =
  F(\bZ)
  +
  G(\bZ)
  \right\}
  \qwhereq
  \choice{
    F(\bZ) =
    -\log \tilde{p}(\bY; \Omega(\bZ))
    \\
    G(\bZ) =
    \lambda \norm{\bZ - \frac{1}{k} \bZ \mathds{1} \mathds{1}^t}_*
  }
  ~.
\end{align}
Provided that $F$ is convex and differentiable and given that
$G$ is convex, this optimization
problem can be solved using the Forward-Backward algorithm
\cite{daubechies2004iterative,combettes2005signal}, {\it a.k.a.},
Iterative Soft-Thresholding Algorithm (ISTA), that reads
for $\gamma > 0$ as
\begin{align}\label{eq:ista}
  \bZ^{(t+1)} = \prox_{\gamma G}(\bZ^{(t)} - \gamma \nabla F(\bZ^{(t)}))~,
\end{align}
where $\prox_{\gamma G}$ is the proximal operator of $\gamma G : \bX \mapsto \gamma G(\bX)$ \cite{moreau1965proximite} defined
for any matrix $\bZ$ as
\begin{align}\label{eq:prox_nuclear_centered}
  \prox_{\gamma G}(\bZ) = \argmin_{\bX} \frac12 \| \bZ - \bX \|_F^2 + \gamma G(\bX)~.
\end{align}

Provided that $F$ is lower bounded and its gradient is $L$-Lipschitz, then
the Forward-Backward algorithm in eq.~\eqref{eq:ista} is guaranteed
to converge for any initialization $\bZ^{(0)}$ to a global minimum of $E$
as soon as the step size $\gamma$ is chosen in the range $(0, 2 / L)$.
ISTA converges in $O(1/t)$ on the objective $E$.
Note that if $F$ is twice differentiable, then $L$ is an upper bound of the
$\ell_2$ operator norm of its Hessian matrix (its maximum eigenvalue).

In this paper, we consider a variant of the Forward-Backward algorithm,
known as Fast ISTA (FISTA) \cite{beck2009fast},
that performs the following iterations
\begin{align}
  \bZ^{(t+1)} &= \prox_{\gamma G}\left(\bZ^{(t)} - \gamma \nabla F(\bZ^{(t)})\right)~,\\
  \rho^{(t+1)}  &= \frac{1+\sqrt{1+4 (\rho^{(t)})^2}}{2}~,\\
  \bZ^{(t+1)} &= \bZ^{(t+1)} + \frac{\rho^{(t)} - 1}{\rho^{(t+1)}} (\bZ^{(t+1)} - \bZ^{(t)})~.
\end{align}
If $F$ is also strictly convex in addition to the other mentioned requirements,
FISTA converges in $O(1/t^2)$ on the objective $E$. \CM{In the numerical experiments carried out in this paper, we shall analyze how the value of the number $T$ of iterations of the above described FISTA algorithm used to compute $\hat{\bX}_{\lambda}$ affects the value of the risk  $\KLA(\hat{\bX}_{\lambda},\bX)$ and the bias of the estimator $\widehat{\MUKLA}(\hat{\bX}_{\lambda})$.}

\subsubsection{The proximal operator of the centered nuclear norm}

The next proposition provides a closed form expression for the proximal operator
of the proposed centered nuclear norm penalty.

\begin{prop}
  Let $\sigma_i$, $u_i$ and $v_i$, $1 \leq i \leq k$, be the $i$-th singular value and
  $i$-th left and right singular vectors, respectively, of the matrix
  $\bZ - \frac{1}{k} \bZ \mathds{1} \mathds{1}^t = \sum_{i=1}^k \sigma_i u_i v_i^t$.
  The proximal operator of $\gamma G$ is given, for $\gamma > 0$, as
  \begin{align}
    \prox_{\gamma G}(\bZ)
    &=
    \frac{1}{k} \bZ \mathds{1} \mathds{1}^t
    +
    \sum_{i=1}^k (\sigma_i - \gamma \lambda)_+ u_i v_i^t~.
  \end{align}
\end{prop}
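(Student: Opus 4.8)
The plan is to exploit the fact that the penalty $G$ depends on $\bX$ only through its row-centered part, that centering is an orthogonal projection, and that this projection induces an orthogonal splitting of $\R^{m \times k}$ under which the proximal minimization decouples. First I would introduce the $k \times k$ matrix $\Pi = I - \frac1k \mathds{1}\mathds{1}^t$ and record that it is the orthogonal projection onto $\{\mathds{1}\}^{\perp} \subset \R^k$, so $\Pi^t = \Pi$, $\Pi^2 = \Pi$, and $\mathds{1}^t \Pi = 0$; with this notation $G(\bX) = \lambda \|\bX\Pi\|_*$. The identity $\bX = \bX\Pi + \frac1k \bX\mathds{1}\mathds{1}^t$ writes any matrix as a sum of an element of $V_0 = \{\bA \in \R^{m\times k} : \bA\mathds{1} = 0\}$ and an element of $V_1 = \{\bB : \bB = \frac1k\bB\mathds{1}\mathds{1}^t\}$. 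Using $\langle \bA,\bB\rangle = \operatorname{tr}(\bA^t\bB)$ together with $(I-\Pi)\Pi = 0$, one checks that $V_0$ and $V_1$ are orthogonal for the Frobenius inner product, so $\R^{m\times k} = V_0 \oplus V_1$ is an orthogonal decomposition.

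Next I would rewrite the defining minimization \eqref{eq:prox_nuclear_centered}. Writing $\bX = \bA + \bB$ and $\bZ = \bZ\Pi + \frac1k\bZ\mathds{1}\mathds{1}^t =: \bZ_0 + \bZ_1$ according to this decomposition, the Pythagorean identity gives
\begin{align*}
\|\bZ - \bX\|_F^2 = \|\bZ_0 - \bA\|_F^2 + \|\bZ_1 - \bB\|_F^2~.
\end{align*}
Moreover $\bA\Pi = \bA$ for $\bA \in V_0$ while $\bB\Pi = 0$ since $\mathds{1}^t\Pi = 0$, so $G(\bX) = \lambda\|\bA\|_*$ involves $\bX$ only through its $V_0$-component. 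The proximal objective therefore splits as
\begin{align*}
\tfrac12\|\bZ_0 - \bA\|_F^2 + \gamma\lambda\|\bA\|_* + \tfrac12\|\bZ_1 - \bB\|_F^2~,
\end{align*}
a sum of a term in $\bA \in V_0$ and a term in $\bB \in V_1$ that minimize independently. The $\bB$-term is minimized at $\bB = \bZ_1 = \frac1k\bZ\mathds{1}\mathds{1}^t$, which is exactly the first summand of the claimed formula.

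It remains to minimize $\tfrac12\|\bZ_0 - \bA\|_F^2 + \gamma\lambda\|\bA\|_*$ over $\bA \in V_0$. I would drop the constraint and invoke the classical characterization of the proximal operator of the nuclear norm, singular value soft-thresholding, which gives the unique global minimizer over all of $\R^{m\times k}$ as $\sum_i (\sigma_i - \gamma\lambda)_+ u_i v_i^t$, where $\bZ_0 = \bZ - \frac1k\bZ\mathds{1}\mathds{1}^t = \sum_i \sigma_i u_i v_i^t$ is the SVD appearing in the statement. The step I expect to be the crux is verifying that this unconstrained minimizer already lies in $V_0$, so the constraint is inactive: since $\bZ_0\mathds{1} = 0$, the relation $\sum_i \sigma_i u_i (v_i^t\mathds{1}) = 0$ and the linear independence of the left singular vectors force $v_i^t\mathds{1} = 0$ for every $i$ with $\sigma_i > 0$, whence each retained rank-one term satisfies $(u_iv_i^t)\mathds{1} = 0$ and the soft-thresholded matrix belongs to $V_0$. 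By strict convexity of the objective this global minimizer is \emph{a fortiori} the minimizer over the subspace $V_0$, and adding back the $\bB$-component $\frac1k\bZ\mathds{1}\mathds{1}^t$ yields the announced expression.
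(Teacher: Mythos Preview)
Your proof is correct and follows essentially the same route as the paper: both introduce the orthogonal decomposition $\R^{m\times k}=V_0\oplus V_1$ induced by the centering projection, use the Pythagorean identity to split the proximal objective, identify the $V_1$-component of the minimizer as $\tfrac1k\bZ\mathds{1}\mathds{1}^t$, and reduce the $V_0$-part to the classical singular-value soft-thresholding formula. The one noteworthy difference is that you are more explicit than the paper about why the constraint $\bA\in V_0$ can be dropped: you argue directly from the SVD of $\bZ_0$ that each right singular vector with $\sigma_i>0$ satisfies $v_i^t\mathds{1}=0$, so the unconstrained soft-thresholded matrix already lies in $V_0$; the paper instead performs a change of variable $\bX\mapsto\bX'+\tfrac1k\bZ\mathds{1}\mathds{1}^t$ after recording that the minimizer must satisfy $\bX\mathds{1}=\bZ\mathds{1}$, and leaves this verification implicit.
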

\begin{proof}
  Remark that eq.~\eqref{eq:prox_nuclear_centered} can be rewritten as
  \begin{align}
    \prox_{\gamma G}(\bZ)
    &= \argmin_{\bX} \frac12 \| \bZ - \bX \|_F^2 +
    \gamma \lambda \norm{P_V \bX}_*
  \end{align}
  where $P_V \bX = \bX - \frac{1}{k} \bX \mathds{1} \mathds{1}^t$ is the
  orthogonal projector into the linear vector subspace $V = \enscond{\bX}{\bX \mathds{1} = 0_m}$.
  Denoting $P_{V^\bot}$ the orthogonal projector on
  the orthogonal subspace $V^\bot$ of $V$, we have
  \begin{align}\label{eq:prox_necluear_centered_pythagorus}
    \frac12 \| \bZ - \bX \|_F^2 +
    \gamma \lambda \norm{P_V \bX}_*
    =
    \frac12 \| P_V (\bZ - \bX) \|_F^2 +
    \frac12 \| P_{V^\bot} (\bZ - \bX) \|_F^2 \nonumber \\ \CM{+
    \gamma \lambda \norm{P_V \bX}_*~}.
  \end{align}
  From eq.~\eqref{eq:prox_necluear_centered_pythagorus}, it necessarily follows  that
   $\bX$  minimizing eq.~\eqref{eq:prox_nuclear_centered}
  must satisfy
  $P_{V^\bot} \bX = P_{V^\bot} \bZ$, hence $\bX \mathds{1} = \bZ \mathds{1}$.
  Using the change of variable
  $\bX \mapsto \bX' + \frac{1}{k} \bZ \mathds{1} \mathds{1}^t$,
  it follows that
  \begin{align}
    \prox_{\gamma G}(\bZ)
    &= \argmin_{\bX} \frac12 \| \bZ - \bX \|_F^2 +
    \gamma \lambda \norm{\bX - \frac{1}{k} \bZ \mathds{1} \mathds{1}^t}_*
    \\
    &= \frac{1}{k} \bZ \mathds{1} \mathds{1}^t + \argmin_{\bX'} \frac12 \| \bZ - \frac{1}{k} \bZ \mathds{1} \mathds{1}^t - \bX' \|_F^2 +
    \gamma \lambda \norm{\bX'}_*
    \\
    &= \frac{1}{k} \bZ \mathds{1} \mathds{1}^t +
    \prox_{\gamma \lambda \norm{\cdot}_*} (\bZ - \frac{1}{k} \bZ \mathds{1} \mathds{1}^t)~.
  \end{align}
  The proximal operator of the nuclear norm
  at $\bX = \sum_{i=1^k} \sigma_i u_i v_i^t$, where
  $\sigma_i$, $u_i$ and $v_i$, $1 \leq i \leq k$, are the $i$-th singular value and
  $i$-th left and right singular vectors, respectively, of the matrix $\bX$,
  is known \cite{Lewis1996,Bauschke2017} to be given by
  \begin{align}
    \prox_{\gamma \lambda \norm{ \cdot}_*} (\bX) =
    \sum_{i=1}^k (\sigma_i - \gamma \lambda)_+ u_i v_i^t~,
  \end{align}
  which concludes the proof.
\end{proof}

\subsubsection{The negative log-likelihood and its gradient in the Poisson case}

Let us first assume that the data follow the Poisson distribution \eqref{eq:modPoisson}. In this setting, the mapping $\Omega : \RR^{m \times k} \to  \RR_+^{m \times k}$ defined by \eqref{eq:OmegaPoisson} is clearly one-to-one with $\Omega^{-1}: \RR_+^{m \times k} \mapsto \RR^{m \times k}$ given by
\begin{align}
  \Omega^{-1}(\bX)_{ij} &= \log X_{ij}.
\end{align}
Hence, for any $\bZ \in \RR^{m \times k}$, the negative log-likelihood of Poisson data with parameters $\bX$
satisfies
\begin{align*}
  F(\bZ) = -\log \tilde{p}(\bY; \Omega(\bZ))
  =& \sum_{i=1}^{m} \sum_{j=1}^{k} \exp(Z_{ij}) - \sum_{i=1}^{m} \sum_{j=1}^{k}  Y_{ij} Z_{ij} +  \sum_{i=1}^{m} \sum_{j=1}^{k} \log(Y_{ij}!) \\
  =& \sum_{i=1}^{m} \sum_{j=1}^{k} \exp(Z_{ij}) - \sum_{i=1}^{m} \sum_{j=1}^{k}  Y_{ij} Z_{ij} +
  \text{Const.}
\end{align*}
where Const.~denotes terms not depending on $\bZ$.
This implies that, in the Poisson case, the variational problem \eqref{eq:variational} is equivalent to
\begin{align}
  \uargmin{
    \bZ \in \RR^{m \times k}
    }
  \sum_{i=1}^{m} \sum_{j=1}^{k} \exp(Z_{ij})
  - \sum_{i=1}^{m} \sum_{j=1}^{k}  Y_{ij} Z_{ij}
  + \lambda \|\bZ - \frac{1}{k} \bZ \mathds{1} \mathds{1}^t\|_{*}~.
\end{align}
It follows that the gradient of $F$ is given by
\begin{align*}
  (\nabla F(\bZ))_{ij}
  = \exp(Z_{ij}) - Y_{ij}
  = \Omega(\bZ)_{ij} - Y_{ij}~,
\end{align*}
and its Hessian is given, for $1 \leq i, r \leq m$ and
$1 \leq j, s \leq m$ with $(i,j) \ne (r, s)$, by
\begin{align*}
  \pdd{F(\bZ)}{Z_{ij}}
  = \exp(Z_{ij})
  = \Omega(\bZ)_{ij}
  \qandq
  \pd{^2 F(\bZ)}{Z_{ij} Z_{r,s}}
  = 0~.
\end{align*}
The $\ell_2$ operator norm of the Hessian of $F$ is thus unbounded on $\RR^{m \times k}$,
meaning that the function $F$ does not admit a Lipschitz gradient.
We will assume that the FISTA sequence satisfies, for all $t \geq 0$,
\begin{align}
  \Omega(\bZ^{(t)}) \in (0, L_{\bY})^{m \times k} \subset \RR^{m \times k}
  \qwhereq
  L_{\bY} = \max_{\substack{1 \leq i \leq m \\ 1 \leq j \leq k}} |Y_{ij}|~,
\end{align}
such that convergence can be ensured provided $\gamma$
is chosen in the range $(0, 2/ L_{\bY})$. In practice, we will choose $\gamma = 1 / L_{\bY}$.

\subsubsection{The negative log-likelihood and its gradient in the multinomial case}

Now, let us consider the  setting of multinomial data \eqref{eq:modMulti}. We introduce the set of row stochastic matrices with positive entries defined by
\begin{align}
\Ss^{m \times k}&= \enscond{\bP \in \RR^{m \times k}}{p_{ij} > 0, \bP \mathds{1} = \mathds{1}}.
\end{align}
Then, by a slight abuse of notation, we shall consider the mapping
\begin{equation}
\Omega(\bZ)_{ij} = \frac{\exp Z_{ij}}{\sum_{q=1}^{k} \exp Z_{iq}},  \label{eq:OmegaMultitrue}
\end{equation}
for all $1 \leq i \leq m$ and $1 \leq j \leq k$, instead of \eqref{eq:OmegaMulti} as we focus on the estimation of a row stochastic matrix $\bP$ in the multinomial case rather than on the expectation $\bX = \diag(n_1,n_2,\ldots,n_m) \bP$ of the data matrix $\bY$.

The mapping $\Omega : \RR^{m \times k} \to \Ss^{m \times k}$ defined by \eqref{eq:OmegaMultitrue} is not one-to-one, but it admits
a right inverse $\Omega^{-1}: \Ss^{m \times k} \mapsto \RR^{m \times k}$ given by
\begin{align}
  \Omega^{-1}(\bP)_{ij} &=
  \log P_{ij} - \frac1k \sum_{q=1}^k \log P_{iq},
  \qforq 1 \leq i \leq k.
\end{align}
Therefore, for any $\bZ \in \RR^{m \times k}$, the {\it normalized}
negative log-likelihood of multinomial data with parameters $\bP$ and $n_1,\ldots,n_m$
satisfies
\begin{align*}
  F(\bZ) =& -\log \tilde{p}(\bY; \Omega(\bZ))
  \\
  =&
  -\sum_{i=1}^m \frac1{n_i} \log(n_i!)
  + \sum_{i=1}^m \frac1{n_i} \sum_{j=1}^{k} \log(Y_{ij}!) \\
  & \CM{
  -
  \sum_{i=1}^m \frac1{n_i} \sum_{j=1}^{k} Y_{ij} \left( Z_{ij} - \log\left( \sum_{q=1}^{k} \exp Z_{iq} \right)\right)}
  \\
  =&
  -
  \sum_{i=1}^m \frac1{n_i} \sum_{j=1}^{k} Y_{ij} Z_{ij}
  +
  \sum_{i=1}^m \frac1{n_i} \sum_{j=1}^{k} Y_{ij} \log\left( \sum_{q=1}^{k} \exp Z_{iq} \right)
  +
  \text{Const.}
  \\
  =&
  -
  \sum_{i=1}^m \sum_{j=1}^{k} \frac{Y_{ij}}{n_i} Z_{ij}
  +
  \sum_{i=1}^m \log\left( \sum_{q=1}^{k} \exp Z_{iq} \right) + \text{Const.}
\end{align*}
where Const.\ denotes terms not depending on $\bZ$.
Hence, in the multinomial case, the variational problem \eqref{eq:variational} is equivalent to
\begin{align}
  \uargmin{
    \bZ \in \RR^{m \times k}
    }
  -
  \sum_{i=1}^m \sum_{j=1}^{k} \frac{Y_{ij}}{n_i} Z_{ij}
  +
  \sum_{i=1}^m \log\left( \sum_{q=1}^{k} \exp Z_{iq} \right)
  + \lambda \|\bZ  - \frac{1}{k} \bZ \mathds{1} \mathds{1}^t \|_{*}
  ~.
\end{align}
It follows that the gradient of $F$ is given by
\begin{align}
  (\nabla F(\bZ))_{ij} &=
  -\frac{Y_{ij}}{n_i}
  +
  \frac{\exp \bZ_{ij}}{\sum_{q=1}^{k} \exp \bZ_{iq}}
  =
  \Omega(\bZ)_{ij} - \frac{Y_{ij}}{n_i}~,
\end{align}
and its Hessian is given, for $1 \leq i, r \leq m$ and
$1 \leq j, s \leq m$ with $(i,j) \ne (r, s)$, by
\begin{align*}
  \pdd{F(\bZ)}{Z_{ij}} =
  \Omega(\bZ)_{ij} (1 - \Omega(\bZ)_{ij})
\end{align*}
\CM{and
\begin{align*}
  \pd{^2F(\bZ)}{Z_{ij} \partial Z_{r,s}} &=
  \choice{
    -\Omega(\bZ)_{ij} \Omega(\bZ)_{rs} & \ifq r = i\\
    0 & \otherwise
  }~.
\end{align*}
}
Since $0 < \Omega(\bZ)_{ij} < 1$ and $\sum_{ij} \Omega(\bZ)_{ij} = 1$,
we have by definition of the $\ell_1$ operator norm
\begin{align}
  \norm{\pd{^2 F(\bZ)}{\bZ}}_1
  &=
  \max_{\substack{1 \leq i \leq m\\ 1 \leq j \leq k}}
  \sum_{r=1}^m \sum_{s=1}^k \left|\pd{^2F(\bZ)}{Z_{ij} \partial Z_{r,s}} \right|
  \\
  &=
  \max_{\substack{1 \leq i \leq m\\ 1 \leq j \leq k}}
  \Omega(\bZ)_{ij} (1 - \Omega(\bZ)_{ij})
  +
  \sum_{s=1}^k \Omega(\bZ)_{ij} \Omega(\bZ)_{is}
  -
  \Omega(\bZ)_{ij}^2
  \\
  &=
  \max_{\substack{1 \leq i \leq m\\ 1 \leq j \leq k}}
  2 (\Omega(\bZ)_{ij} - \Omega(\bZ)_{ij}^2)
  \leq
  \frac12~.
\end{align}
The Hessian being self-adjoint, its $\ell_1$ norm equals its $\ell_\infty$ norm.
By Hölder inequality and definition of operator norms,
for all matrices $\bZ \in \RR^{m \times k}$ and $\bV \in \RR^{m \times k}$,
it follows that
\begin{align}\label{eq:lipschitz_multinomial_inequality}
  \frac{
    \norm{ \pd{^2 F(\bZ)}{\bZ^2} [ \bV ] }_2
  }{
    \norm{ \bV }_2
  }
  \leq
  \norm{\pdd{F(\bZ)}{\bZ}}_2 \leq
  \sqrt{\norm{\pdd{F(\bZ)}{\bZ}}_1 \norm{\pdd{F(\bZ)}{\bZ}}_\infty}
  =
  \frac12~,
\end{align}
where the $\ell_2$ vector norm of a matrix $\bV$ has to be understood
here as its Frobenius norm ($\bV$ being considered as a vector of a Hilbert space
indexed by two indices).
In particular, the inequality in the left-hand-side holds true for $\Omega(\bZ)_{ij} = \frac12$ if $1 \leq j \leq 2$ and $0$ otherwise, and
$V_{i1} = \frac1{\sqrt{2m}}$, $V_{i2} = -\frac1{\sqrt{2m}}$, and
$V_{ij} = 0$ for $3 \leq j \leq k$. Remark that $\norm{\bV}_2 = 1$ and we have
\begin{align}
  & \norm{ \pd{^2 F(\bZ)}{\bZ^2} [ \bV ] }_2^2
  =
  \sum_{i=1}^m \sum_{j=1}^k \left(\pd{^2 F(\bZ)}{\bZ^2} [ \bV ] \right)^2_{ij}
  =
  \sum_{i=1}^m \sum_{j=1}^k \left( \sum_{r,s} \pd{^2 F(\bZ)}{Z_{ij} \partial Z_{r,s}} V_{r,s} \right)^2
  \\
  &=
  \sum_{i=1}^m \sum_{j=1}^k
  \left(
  \Omega(\bZ)_{ij} V_{ij}
  -
  \sum_{s=1}^k
  \Omega(\bZ)_{ij} \Omega(\bZ)_{i,s} V_{i,s}
  \right)^2
  =
  \sum_{i=1}^m \sum_{j=1}^2
  \frac14
  \left(
  V_{ij}
  -
  \frac12
  \sum_{s = 1}^2
  V_{i,s}
  \right)^2
  \\
  &
  =
  \sum_{i=1}^m \sum_{j=1}^2
  \frac14
  \left(
  \frac{1}{\sqrt{2m}}
  -
  0
  \right)^2
  =
  \sum_{j=1}^2 \sum_{i=1}^m
  \frac1{8 m}
  = \frac14~.
\end{align}
From eq.~\eqref{eq:lipschitz_multinomial_inequality}, it follows that
\begin{align}
  L = \sup_{\bZ \in \RR^{m \times k}} \norm{ \pd{^2 F(\bZ)}{\bZ^2} }_2 = \frac12~.
\end{align}
As a consequence, by choosing $\gamma$ in the range $(0, 4)$,
the sequence $E(\bZ^{(t)})$ as defined by FISTA is guaranteed to converge
for any initialization $\bZ^{(0)}$ to its minimum value. In practice, we took $\gamma = 2$ in the multinomial case.

\subsection{Fast evaluation of the proposed unbiased estimators}

The unbiased estimators of the Kullback-Leibler analysis risks
for Poisson and multinomial data, introduced in \eqref{eq:PUKLA}
and \eqref{eq:estimMUKLAMulti}, respectively, require to evaluate a matrix
$\bQ \in \RR^{m \times k}$ whose elements can be expressed, for any $1 \leq i \leq m$ and $1 \leq j \leq k$, as
\begin{align}\label{eq:y_g_prod_discrete_diff}
  Q_{ij} = g_{ij}(\bY - \bE_{ij})
\end{align}
where  either $g_{ij} = \log \circ f_{ij}$ or  $g_{ij} = \log \circ \,\hat{p}_{ij}$, respectively.
In this paper, the considered spectral estimators evaluate $g_{ij}(\bY - \bE_{ij})$ with
time complexity in $\Oo(m k)$. As a consequence, since this operation must be repeated
for all $1 \leq i \leq m$ and $1 \leq j \leq k$, the overall complexity to evaluate
$\bQ$ requires $\Oo(m^2 k^2)$ operations.
Due to this prohibitive quadratic complexity to evaluate eq.~\eqref{eq:y_g_prod_discrete_diff},
we need to perform some approximations. In the next proposition we will build a
biased estimator of $\bQ$ that can be evaluated in
linear complexity, {\it i.e.} in $\Oo(m k)$.


\begin{thm}\label{thm:g_discrete_diff_estimator}
  Let $\Ll \in \NN_*$. Assume $g$ is of class $C^{\Ll+1}$.
  Let $\bZ^{(\ell)} \in \RR^{m \times k}$, $1 \leq \ell \leq \Ll$, be a sequence of independent
  random matrices such that
  $\EE[(Z^{(\ell)}_{ij})^2] = 1$ and $\EE[Z^{(\ell)}_{ij} Z^{(\ell)}_{r,s}] = 0$ for all $1 \leq i,r \leq m$
  and $1 \leq j,s \leq k$ with $(i,r) \ne (j,s)$.
  Let us define recursively the $\ell$-th directional derivative of $g$ in directions
  $\bdelta_1, \bdelta_2, \ldots, \bdelta_{\ell} \in \RR^{m \times k}$ as
  \begin{align}
    \pd{^{\ell} g}{\bY^{\ell}}(\bY)[\bdelta_1, \ldots, \bdelta_{\ell}]
    =
    \lim_{\epsilon \to 0}
    \frac{1}{\epsilon}
    \left[
      \pd{^{\ell-1} g}{\bY^{\ell-1}}(\bY + \epsilon \bdelta_{\ell})[\bdelta_1, \ldots, \bdelta_{\ell-1}] \right. \nonumber \\
\CM{   \left.    -
      \pd{^{\ell-1} g}{\bY^{\ell-1}}(\bY)[\bdelta_1, \ldots, \bdelta_{\ell-1}]
      \right]}
  \end{align}
  with $\pd{^0 g}{\bY^0}(\bY) = g(\bY)$.
  The following random matrix $\hat{\bQ}$
  defined as
  \begin{align}\label{eq:g_discrete_diff_estimator}
    \hat{\bQ} =
    \sum_{\ell=0}^{\Ll}
    \frac{(-1)^{\ell}}{\ell!}
      \left(\bZ^{(1)} \odot \ldots \odot \bZ^{(\ell)}\right)
      \pd{^{\ell} g}{\bY^{\ell}}(\bY)[\bZ^{(1)}, \ldots, \bZ^{(\ell)}]
  \end{align}
  where $\odot$ denotes the Hadamard (element-wise) product,
  is an estimator of $\bQ$ defined in eq.~\eqref{eq:y_g_prod_discrete_diff}
  Moreover, its bias is given, for any $1 \leq i \leq m$ and $1 \leq j \leq k$, by
  \begin{align}
    (\EE[\hat{\bQ}] - \bQ)_{ij}
    =
    \frac{(-1)^{\Ll}}{\Ll!}\int_{0}^{1}
    \alpha^T
    \pd{^{\Ll+1} g}{Y_{ij}^{\Ll+1}}(\bY - \alpha \bE_{ij})
    \;
    d\alpha
    ~,
  \end{align}
  where the expectation is with respect to the sequence of random matrices
  $\bZ^{(\ell)}$.
\end{thm}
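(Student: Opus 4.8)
The plan is to fix an entry $(i,j)$, reduce the evaluation $Q_{ij}=g_{ij}(\bY-\bE_{ij})$ to a one-dimensional Taylor expansion along the direction $\bE_{ij}$, and then show that averaging $\hat{\bQ}$ over the random matrices $\bZ^{(\ell)}$ reproduces exactly the truncated Taylor polynomial of $g_{ij}$ at $\bY$, so that the bias is precisely the omitted remainder. Writing $g=g_{ij}$ and $\phi(\alpha)=g(\bY-\alpha\bE_{ij})$, we have $Q_{ij}=\phi(1)$, while the chain rule gives $\phi^{(\ell)}(\alpha)=(-1)^{\ell}\pd{^{\ell}g}{Y_{ij}^{\ell}}(\bY-\alpha\bE_{ij})$, which is justified since $g\in C^{\Ll+1}$. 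Taylor's formula with integral remainder between $0$ and $1$ then yields
\begin{align*}
  Q_{ij}
  =\sum_{\ell=0}^{\Ll}\frac{(-1)^{\ell}}{\ell!}\pd{^{\ell}g}{Y_{ij}^{\ell}}(\bY)
  +\frac{(-1)^{\Ll+1}}{\Ll!}\int_{0}^{1}(1-\alpha)^{\Ll}\pd{^{\Ll+1}g}{Y_{ij}^{\Ll+1}}(\bY-\alpha\bE_{ij})\,d\alpha.
\end{align*}

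The core step is to compute $\EE[\hat{Q}_{ij}]$ and recognize the sum above. Expanding each $\ell$-linear directional derivative in the canonical basis, the $(i,j)$-th entry of the $\ell$-th summand of $\hat{\bQ}$ is
\begin{align*}
  \frac{(-1)^{\ell}}{\ell!}
  \sum_{(r_1,s_1),\ldots,(r_{\ell},s_{\ell})}
  \Bigl(\prod_{t=1}^{\ell}Z^{(t)}_{ij}Z^{(t)}_{r_t s_t}\Bigr)
  \frac{\partial^{\ell}g}{\partial Y_{r_1 s_1}\cdots\partial Y_{r_{\ell}s_{\ell}}}(\bY).
\end{align*}
Because $\bZ^{(1)},\ldots,\bZ^{(\Ll)}$ are independent, the expectation factorizes across the superscript $t$, and the hypotheses $\EE[(Z^{(t)}_{ij})^2]=1$ and $\EE[Z^{(t)}_{ij}Z^{(t)}_{rs}]=0$ for $(r,s)\ne(i,j)$ reduce each factor $\EE[Z^{(t)}_{ij}Z^{(t)}_{r_t s_t}]$ to the Kronecker symbol $\delta_{(r_t,s_t),(i,j)}$. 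Hence only the diagonal tuple $(r_t,s_t)=(i,j)$ for every $t$ survives, the corresponding mixed partial collapses to $\pd{^{\ell}g}{Y_{ij}^{\ell}}(\bY)$, and summing over $\ell$ gives
\begin{align*}
  \EE[\hat{Q}_{ij}]=\sum_{\ell=0}^{\Ll}\frac{(-1)^{\ell}}{\ell!}\pd{^{\ell}g}{Y_{ij}^{\ell}}(\bY).
\end{align*}

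Subtracting the two displays, the Taylor polynomial cancels and the bias equals minus the remainder,
\begin{align*}
  (\EE[\hat{\bQ}]-\bQ)_{ij}
  =\frac{(-1)^{\Ll}}{\Ll!}\int_{0}^{1}(1-\alpha)^{\Ll}\pd{^{\Ll+1}g}{Y_{ij}^{\Ll+1}}(\bY-\alpha\bE_{ij})\,d\alpha,
\end{align*}
which is the claimed expression (the factor being $(1-\alpha)^{\Ll}$). Since $(i,j)$ was arbitrary, this holds entrywise.

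The main obstacle is the expectation computation. I must first confirm that the recursively defined directional derivative coincides with the usual symmetric $\ell$-linear form obtained by expanding each direction in the canonical basis, which relies on equality of mixed partials (valid since $g\in C^{\Ll+1}$), and then track the combinatorics of the averaging. The decisive point is that independence \emph{across} the superscript $\ell$ turns the joint moment $\EE[\prod_{t}Z^{(t)}_{ij}Z^{(t)}_{r_t s_t}]$ into a product of pairwise second moments, each a Kronecker delta by assumption; this is exactly what forces all $\ell$ differentiation directions onto $\bE_{ij}$ and discards every genuinely mixed partial derivative, leaving only the pure $Y_{ij}$-derivatives that assemble the Taylor polynomial. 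The scalar Taylor expansion and the final cancellation are then routine, the only care being the bookkeeping of signs and factorial normalizations.
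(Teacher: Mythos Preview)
Your proof is correct and follows essentially the same route as the paper: a Taylor expansion of $g_{ij}(\bY-\bE_{ij})$ along the direction $\bE_{ij}$, combined with the identity $\EE[Z_{ij}\,h(\bZ)]=h(\bE_{ij})$ for linear $h$ (which the paper isolates as a separate lemma and then applies via the multilinearity of the directional derivative, whereas you expand directly in the canonical basis and collapse the product of Kronecker deltas). Your remainder factor $(1-\alpha)^{\Ll}$ is the standard integral-remainder weight; the $\alpha^{T}$ appearing in the paper's statement and proof is a typographical slip.
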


Before turning to the proof of Theorem \ref{thm:g_discrete_diff_estimator},
let us introduce a first Lemma.

\begin{lem}\label{lem:trace_estimator}
  Let $h : \RR^{m \times k} \to \RR^{m \times k}$ be a linear function.
  Let $\bZ \in \RR^{m \times k}$ be a random matrix such that
  $\EE[Z_{ij}^2] = 1$ and $\EE[Z_{ij} Z_{r,s}] = 0$ for all $1 \leq i,r \leq m$
  and $1 \leq j,s \leq k$ with $(i,r) \ne (j,s)$.
  Then
  \begin{align}
    \EE[Z_{ij} h(\bZ)]
    =
    h(\bE_{ij})~.
  \end{align}
\end{lem}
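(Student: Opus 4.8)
The plan is to combine the linearity of $h$ with the second-moment structure imposed on the probe matrix $\bZ$; the identity is then an exact Hutchinson-type moment computation. First I would expand $\bZ$ in the canonical basis of $\RR^{m\times k}$, writing
\begin{equation}
\bZ = \sum_{r=1}^{m}\sum_{s=1}^{k} Z_{rs}\,\bE_{rs},
\end{equation}
and invoke the linearity of $h$ (together with finite-dimensionality, which makes the interchange of $h$ with a finite sum immediate) to obtain $h(\bZ) = \sum_{r,s} Z_{rs}\, h(\bE_{rs})$.

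Next I would multiply by the scalar $Z_{ij}$ and take expectations, using that each $h(\bE_{rs})$ is a deterministic matrix that can be pulled out of the expectation:
\begin{equation}
\EE[Z_{ij}\,h(\bZ)] = \sum_{r=1}^{m}\sum_{s=1}^{k} \EE[Z_{ij} Z_{rs}]\, h(\bE_{rs}).
\end{equation}
The prescribed moments now collapse the double sum: by hypothesis $\EE[Z_{ij}Z_{rs}] = 0$ whenever $(r,s)\ne(i,j)$, while the single surviving term $(r,s)=(i,j)$ carries the factor $\EE[Z_{ij}^2]=1$. Hence only $h(\bE_{ij})$ remains, giving $\EE[Z_{ij}\,h(\bZ)] = h(\bE_{ij})$, as claimed.

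There is essentially no hard step here: the whole argument is a one-line second-moment calculation, and the conclusion is \emph{exact} precisely because $\bZ$ is isotropic with unit per-entry variance and uncorrelated distinct entries. The only points requiring (routine) care are justifying that $h$ commutes with the finite basis expansion, which follows at once from linearity, and reading the stated moment condition as an uncorrelatedness condition across distinct matrix positions $(i,j)\ne(r,s)$. This lemma then serves as the building block for the trace-estimator identity underlying Theorem \ref{thm:g_discrete_diff_estimator}, where $h$ will be instantiated as a directional derivative of $g$.
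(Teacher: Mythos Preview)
Your proof is correct and follows essentially the same approach as the paper's own argument: expand $\bZ$ in the canonical basis, apply linearity of $h$, and use the second-moment assumptions on $\bZ$ to collapse the sum to the single term $h(\bE_{ij})$. Your observation that the moment condition should be read as uncorrelatedness across distinct positions $(i,j)\ne(r,s)$ is the intended interpretation.
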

\begin{proof}
  Using the linearity of $h$ and the assumptions on $\bZ$,
  the proof simply reads as follow
  \begin{align}
    \EE[Z_{ij} h(\bZ)]
    &=
    \EE \left[ Z_{ij} h\left( \sum_{r,s} Z_{r,s} \bE_{r,s} \right) \right]
    =
    \sum_{r,s} \EE[Z_{ij} Z_{r,s} h(\bE_{r,s})]
    \\
    &=
    \EE[Z_{ij}^2] h(\bE_{ij})
    +
    \sum_{(r,s) \ne (i,j)} \EE[Z_{ij} Z_{r,s}] h(\bE_{r,s})
    =
    h(\bE_{ij})~.
  \end{align}
\end{proof}

We are now equipped to turn to the proof of Theorem \ref{thm:g_discrete_diff_estimator}.

\begin{proof}[Proof of Theorem \ref{thm:g_discrete_diff_estimator}]
  By Taylor expansion of order $\Ll$, we have
  \begin{align}\label{eq:g_discrete_diff}
    g(\bY - \bE_{ij})_{ij}
    =
    \sum_{\ell=0}^{\Ll}
    \frac{(-1)^{\ell}}{\ell!} \pd{^{\ell} g_{ij}}{Y_{ij}^t}(\bY)
    +
    R_{ij}
  \end{align}
  where $R_{ij}$ is the remainder given by
  \begin{align}
    R_{ij} = \frac{(-1)^{\Ll}}{\Ll!}\int_{0}^{1}
    \alpha^T
    \pd{^{\Ll+1} g_{ij}}{Y_{ij}^{\Ll+1}}(\bY - \alpha \bE_{ij})
    \;
    d\alpha~.
  \end{align}
  Note that we can rewrite eq.~\eqref{eq:g_discrete_diff} in terms of the directional derivatives
  for the directions $\bdelta_{\ell} = \bE_{ij}$ leading to
  \begin{align}
    g(\bY - \bE_{ij})_{ij}
    =
    \sum_{\ell=0}^{\Ll}
    \frac{(-1)^{\ell}}{\ell!} \left[ \pd{^{\ell} g_{ij}}{\bY^{\ell}}(\bY)[\underbrace{\bE_{ij}, \ldots, \bE_{ij}}_{\text{$t$ times}}] \right]
    +
    R_{ij}~.
  \end{align}
  Recalling that the $\ell$-th directional derivative is a $t$-linear mapping, {\it i.e.},
  linear with respect to each of its $t$ directions, we have
  by virtue of Lemma \ref{lem:trace_estimator}
  \begin{align}
    g(\bY - \bE_{ij})_{ij}
    =
    \sum_{\ell=0}^{\Ll}
    \frac{(-1)^{\ell}}{\ell!} \EE \left[
      \left(Z_{ij}^{(1)} \times \ldots \times Z_{ij}^{(\ell)}\right)
      \pd{^k g_{ij}}{\bY^k}(\bY)[\bZ^{(1)}, \ldots, \bZ^{(\ell)}]
      \right]
    +
    R_{ij},~
  \end{align}
  which concludes the proof of Theorem  \ref{thm:g_discrete_diff_estimator}.
\end{proof}

In practice, we will consider $\bZ^{(\ell)}$
with entries $Z^{(\ell)}_{ij} \in \{-1, +1\}^{m \times k}$ independently distributed according
to the Rademacher law: $\PP[Z^{(\ell)}_{ij} = -1] = \PP[Z^{(\ell)}_{ij} = +1] = \tfrac12$.
We suggest evaluating the $k$-th directional derivative for
any set of directions $\bdelta^{(1)}, \ldots, \bdelta^{(\ell)}$ by
relying on centered finite differences defined recursively as
\begin{align}\label{eq:recursive_finite_difference}
  \widehat{\pd{^{\ell} g}{\bY^{\ell}}}(\bY)[\bdelta_1, \ldots, \bdelta_{\ell}]
  =
  \frac{1}{2\epsilon_{\ell}}
  \left[
    \pd{^{\ell-1} g}{\bY^{\ell-1}}(\bY + \epsilon_{\ell} \bdelta_{\ell})[\bdelta_1, \ldots, \bdelta_{\ell-1}] \nonumber \right. \\
    \CM{ \left.
    -
    \pd{^{\ell-1} g}{\bY^{\ell-1}}(\bY - \epsilon_{\ell} \bdelta_{\ell})[\bdelta_1, \ldots, \bdelta_{\ell-1}]
    \right]~.}
\end{align}
Note that in practice we will choose $\epsilon_{\ell} = 0.25 \sqrt[\ell]{0.1}$
where the $\ell$-th root is considered to ensure that all elements in the summation
have an error term with a comparable order of magnitude.


Thanks to the use of finite differences, evaluating the $\Ll$ partial derivatives
involved in the definition of $\hat{\bQ}$ requires evaluating
$g$ at $S = \sum_{\ell=0}^{\Ll} 2^{\ell} = 2^{\Ll+1} - 1$ different
locations around $\bY$. Since $\Ll$ will be chosen independently of $m$ and $k$,
this shows that the estimator $\hat{\bQ}$ defined in Theorem \ref{thm:g_discrete_diff_estimator}
can be evaluated in linear time $\Oo(m k)$ (assuming that evaluating $g$ requires $\Oo(m k)$ operations as well).
In practice, we will choose $1 \leq \Ll \leq 6 \ll m k$, hence $3 \leq S \leq 127$,
which undeniably shows the practical advantage of using $\hat{\bQ}$ as a proxy for $\bQ$.

The functions $g$ induced by the spectral estimators considered in this paper
are not of class $C^{\Ll+1}$. They are, in fact, not even $C^1$ since the proximal operator of the nuclear norm
is only differentiable almost everywhere. Nevertheless, we observed in practice
that relying on finite differences, with $\epsilon > 0$ big enough,
has a smoothing effect that subsequently leads to a relevant estimation of $\bQ$,
even though the higher order partial derivatives may not exist
(this is consistent with observations made in \cite{deledalle2014stein}
in the case of the Stein Unbiased Risk Estimator for Gaussian distributed data).

Note that a similar methodology to evaluate $\bQ$ was proposed in \cite{Deledalle} and \cite{Bigot17}
except only a linear approximation of $g$ was considered (\ie $\Ll = 1$).
Satisfying results were obtained since $g$ was closed enough
to its first order approximation. In this paper, we consider multinomial data
for which the quantity $\bP$ to be estimated lies on the manifold
of row-stochastic positive matrices $\Ss^{m \times k}$. This manifold has clearly a non-linear structure
poorly approximated by the set of its tangent subspaces (all the more for small $k$).
As a consequence, $g$ cannot be well captured by its linear expansion,
and, as supported by our numerical experiments,
considering higher order approximations increases considerably the
quality of $\widehat{\MUKLA}(\hat{\bX})$ as an estimator of $\MKLA(\hat{\bX},\bX)$
in this context.

\section{Numerical experiments and applications} \label{sec:num}

In this section, we report the results of various numerical experiments on simulated and real data that shed some lights on the performances of our approach. For readability, in the multinomial case (resp.\ Poisson case) the constant term $ \sum_{i=1}^m  \sum_{j=1}^{k} p_{ij}\log {p}_{ij}$  (resp.\  $\sum_{i=1}^{m} \sum_{j=1}^{k}    X_{ij} \log \left( X_{ij}  \right) -X_{ij} $)  has been added to all estimators of the KL risk. Finally, in all the Figures, we have chosen to visualize true and estimated compositional (or intensity)  matrices through their parametrization by the mapping $\Omega$, that is to display $\Omega^{-1}(\bP)$ or $\Omega^{-1}(\hat{\bP})$ instead of $\bP$ or $\hat{\bP}$.

\subsection{Comparison with a related low-rank approach in the multinomial case}
\label{sec:comp_low_rank_approaches}

In the case of multinomial data, the problem of estimating the compositional matrix $\bP$ under a low rank structure assumption has been recently considered in \cite{Cao17} using a nuclear norm regularized maximum likelihood estimator constrained to belong to a bounded simplex space. More precisely, the numerical approach considered in \cite{Cao17} amounts to compute an estimator defined as
\begin{equation}
\hat{\bP}_{\mu,\alpha} = \argmin_{\bP \in \Ss^{m \times k}(\alpha) }   -\frac{1}{n}\log p(\bY; \bP) + \mu \|\bP\|_{*}  \label{eq:estCao17}
\end{equation}
where \CM{$\log p(\bY; \bP) =   \sum_{i=1}^m \sum_{j=1}^{k} Y_{ij} \log(P_{ij})$ is the  log-likelihood (up to terms not depending on the $p_{ij}$'s) for multinomial data with parameters $\bP$},  $\mu > 0$ is the usual regularization parameter, $n = \sum_{i=1}^{m} n_{i}$, and
$$
\Ss^{m \times k}(\alpha) = \enscond{\bP \in \RR^{m \times k}}{\alpha \leq p_{ij} \leq 1, \bP \mathds{1} = \mathds{1}}
$$
is the subspace of row-stochastic matrices whose elements have entries lower bounded  by $\alpha$, where $\alpha > 0$ is tuning parameter.
Hence, the methodology followed in \cite{Cao17} differs from our approach by the introduction of a tuning parameter $\alpha$ to guarantee the construction of an estimator with positive entries, and the regularization of the compositional matrix $\bP$ itself though the penalty $ \|\bP\|_{*}$ instead of penalizing its re-parametrization  $\Omega^{-1}(\bP)$ as proposed in this paper.

The estimator  $\hat{\bX}_{\lambda}$ is computed using the FISTA algorithm described in Section \ref{sec:algo}. The computation of $\hat{\bP}_{\mu,\alpha} $ is based on an optimization algorithm that uses a generalized accelerated proximal gradient method described in \cite[Section 3.2]{Cao17}. An implementation in R is available from \url{https://github.com/yuanpeicao/composition-estimate}.  From these codes (using the default values for the algorithm to compute $\hat{\bP}_{\mu,\alpha}$), it appears that a recommended data-driven choice for $\alpha$ is
\begin{equation}
\alpha =  \alpha(s) := \min(1 , s \min \enscond{Y_{ij}}{ Y_{ij} > 0} ) \mbox{ with } s = 10^{-2}. \label{eq:alpha}
\end{equation}
Alternatively, cross-validation can be used to select $\alpha$ as detailed in \cite[Section 3.4]{Cao17}.

Using Monte Carlo simulations, we now report the results of numerical experiments on the comparison of the expected KL  risks
$\MKLA(\hat{\bX}_{\lambda},\bX)$ and $\MKLA(\hat{\bX}_{\mu,s},\bX)$, where
$$
\hat{\bX}_{\lambda} = \diag(n_1,n_2,\ldots,n_m) \hat{\bP}_{\lambda} \mbox{ and } \hat{\bX}_{\mu,s} = \diag(n_1,n_2,\ldots,n_m)\hat{\bP}_{\mu,s},
$$
where $ \hat{\bP}_{\lambda} = \Omega(\hat{\bZ}_{\lambda} )$ and $\hat{\bP}_{\mu,s} = \hat{\bP}_{\mu,\alpha(s)} $.
To this end, we consider the following choices for the low-rank compositional matrix $\bP$: \\

\noindent {\bf (Case 1)} the entries of $\bP$ are
\begin{equation}
P_{ij} = \frac{1}{10k} + \frac{9}{10} \frac{A_{ij}}{\sum_j A_{ij}}
\mbox{ with } A_{ij} =  \exp(10 \cos(\frac{i}{k} 6 \pi) \sin(\frac{j}{k} 6 \pi)) \label{eq:modcase1}
\end{equation}
for $1 \leq i \leq m$ and $1 \leq j \leq k$, \\

\noindent {\bf (Case 2)}  $\bP$ is chosen according to the simulation study described in \cite[Section 5]{Cao17}. This corresponds to
\begin{equation}
  P_{ij} = W_{ij} / \sum_{\ell=1}^{k} W_{i \ell}, \label{eq:modcase2}
\end{equation}
where $W = U V^T$ with $U \in \R_+^{m \times r}$ whose entries are the absolute values of iid standard Gaussian variables, and $V \in \R^{m \times k}$ is a random matrix with correlated entries (having small variances) chosen to mimic the typical behavior of compositional data arising from metagenomics (we fix the rank $r=20$). With small probability, this procedure may produce non-positive values and the generating process is repeated until all the entries of $\bP$ are positive (for further details we refer to \cite[Section 5]{Cao17}).

The resulting compositional matrices $\bP$ are displayed in Figure \ref{fig:case1} and Figure \ref{fig:case2} through the visualization of their parametrization by the mapping $\Omega$. Then, as proposed in
\cite{Cao17}, we generate count data from a Poisson-multinomial model
which consists in first generating independent realizations
$n_1,\ldots,n_m$ from a Poisson distribution with intensity $n_0$, and
then in sampling $\bY$ from the multinomial model \eqref{eq:modMulti}
conditionally on these realizations. Monte-Carlo simulations can then
be used to estimate the expected risk $\KLA$ of each estimator
over different grids of values for the regularization parameters
$\lambda$ and $\mu$ (note that the $n_i$'s are random variables in this simulation setup). The results are displayed in Figure
\ref{fig:case1} and Figure
\ref{fig:case2}  by choosing the scaling parameter $s \in \{10^{-4},10^{-3}\}$ in the calibration \eqref{eq:alpha} for
$\alpha$. It appears that the lowest risks are obtained for the estimators $\hat{\bX}_{\lambda}$ and $ \hat{\bX}_{\mu,s}$  with $s=10^{-3}$.
The main advantage of our approach is its dependence on only one regularization parameter $\lambda$, whereas the methodology in  \cite{Cao17} requires the tuning of the two regularization parameters $\mu$ and $\alpha$.  In particular, the choice of $\alpha$ has a great importance.


\begin{figure}[htbp]
  \centering

  \subfigure[$n_0=1000$, $\lambda^\ast = 1.45$, $\mu^\ast = 0.025$ for $s = 10^{-3}$]{\includegraphics[width=.45\linewidth]{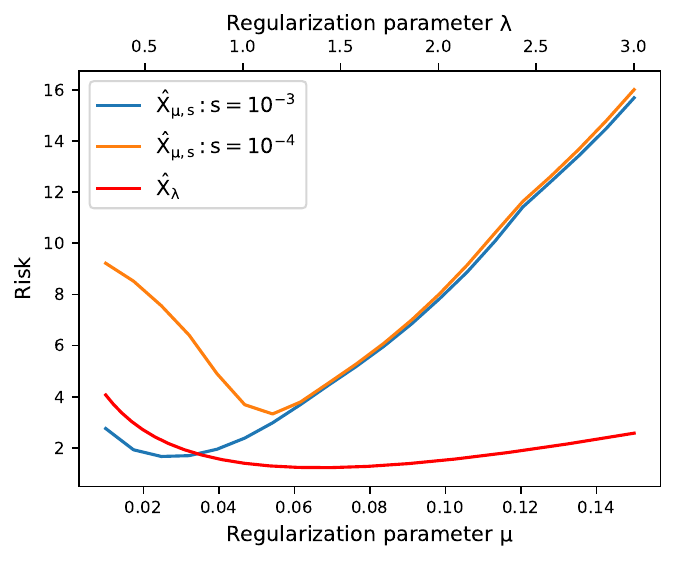}}
  \subfigure[$n_0=100$, $\lambda^\ast = 1.64$, $\mu^\ast = 0.076$ for $s = 10^{-3}$]{\includegraphics[width=.45\linewidth]{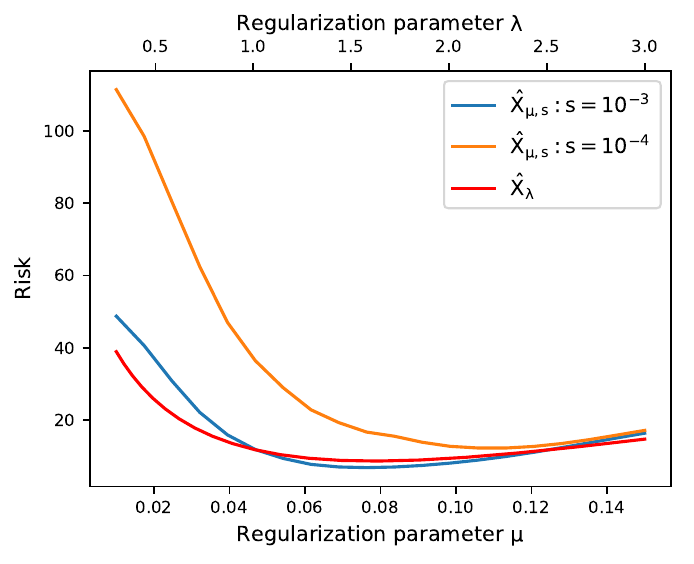}}

  \subfigure[Underlying matrix $\Omega^{-1}(\bP)$]{\includegraphics[width=.32\linewidth]{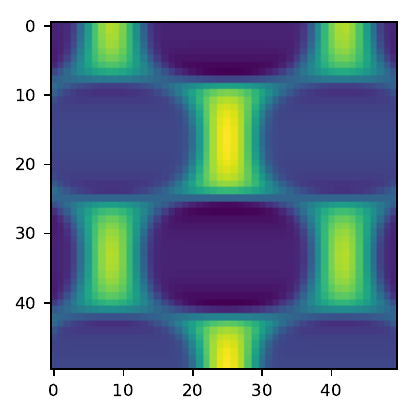}}
  \subfigure[ $\Omega^{-1}(\hat{\bP}_{\lambda^\ast})$]{\includegraphics[width=.32\linewidth]{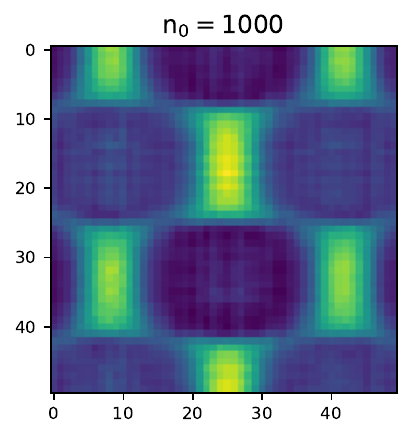}}
  \subfigure[ $\Omega^{-1}(\hat{\bP}_{\mu^\ast,s})$ with $s = 10^{-3}$]{\includegraphics[width=.32\linewidth]{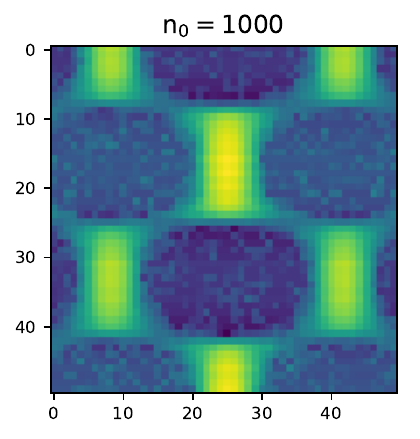}}

  \hspace{.32\linewidth}
  \subfigure[ $\Omega^{-1}(\hat{\bP}_{\lambda^\ast})$]{\includegraphics[width=.32\linewidth]{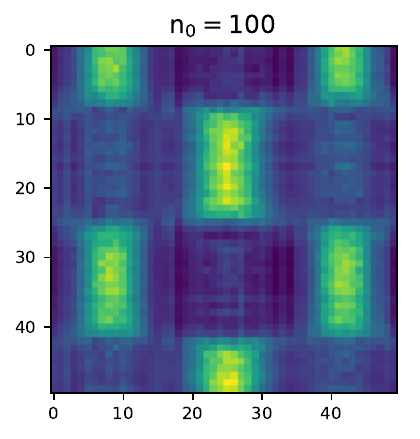}}
  \subfigure[ $\Omega^{-1}(\hat{\bP}_{\mu^\ast,s})$ with $s = 10^{-3}$]{\includegraphics[width=.32\linewidth]{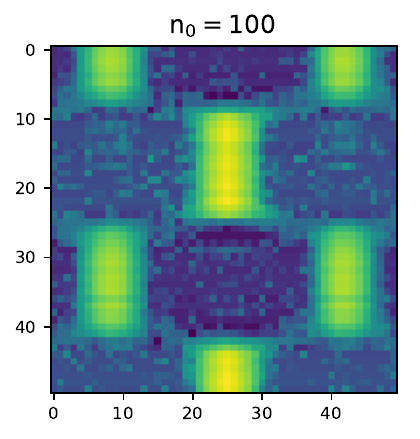}}

  \caption{{\bf (Case 1)} (a-b) Expected KL risk (by Monte Carlo simulations) $\KLA(\hat{\bX}_{\lambda},\bX)$ and $\KLA(\hat{\bX}_{\mu,s},\bX)$ as functions of $\lambda$ and $\mu$ (with $m=200$ and $k=100$) for different mean value $n_0$ of counts per line. (c-g) The upper-left $50 \times 50$ sub-matrices of the parametrization by $\Omega$ of the
    composition matrix $\bP$  , and the  denoising results (from one realization $\bY$) obtained using the best parameters $\lambda^\ast$ and $\mu^\ast$ minimizing the expected KL risk for $n_0 = 1000$ and $n_0 = 100$.  }
  \label{fig:case1}
\end{figure}

\begin{figure}[htbp]
  \centering

  \subfigure[$n_0=1000$, $\lambda^\ast = 1.42$, $\mu^\ast = 0.025$ for $s = 10^{-3}$]{\includegraphics[width=.45\linewidth]{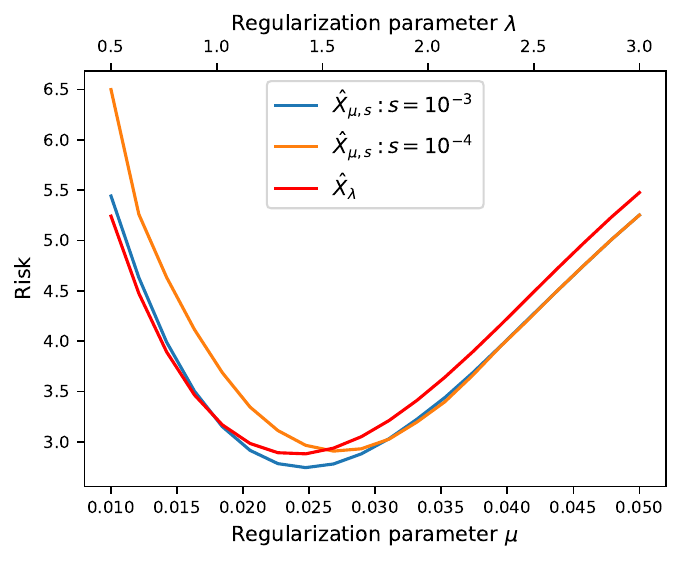}}
  \subfigure[$n_0=100$, $\lambda^\ast = 2.13$, $\mu^\ast = 0.115$ for $s = 10^{-3}$]{\includegraphics[width=.45\linewidth]{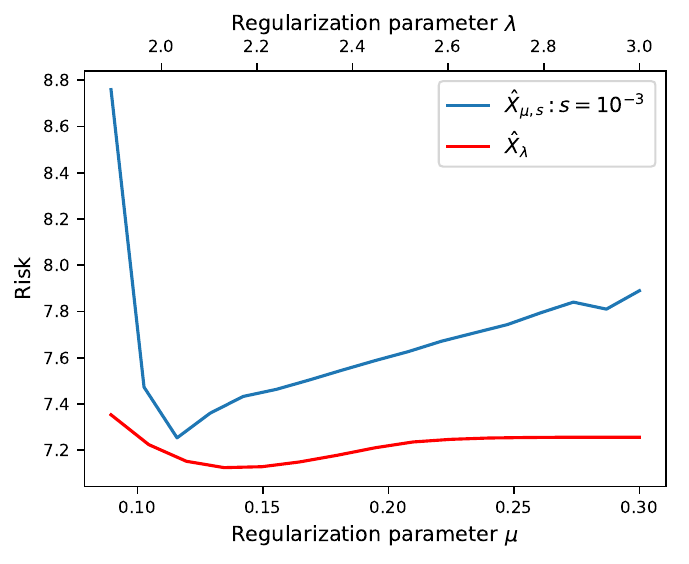}}

  \subfigure[Underlying matrix $\Omega^{-1}(\bP)$]{\includegraphics[width=.32\linewidth]{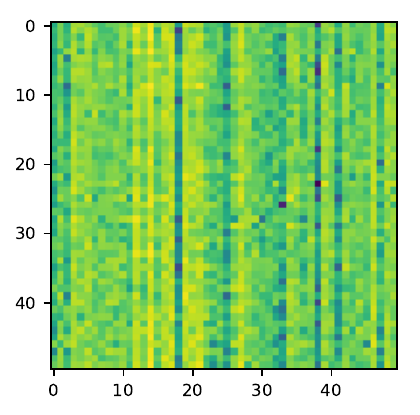}}
  \subfigure[ $\Omega^{-1}(\hat{\bP}_{\lambda^\ast})$]{\includegraphics[width=.32\linewidth]{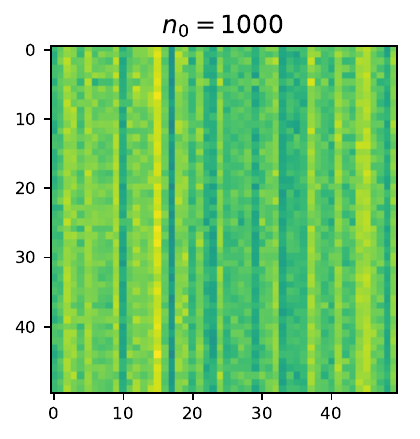}}
  \subfigure[ $\Omega^{-1}(\hat{\bP}_{\mu^\ast,s})$ with $s = 10^{-3}$]{\includegraphics[width=.32\linewidth]{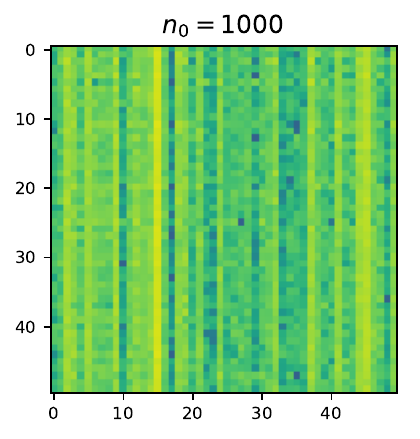}}

  \hspace{.32\linewidth}
  \subfigure[ $\Omega^{-1}(\hat{\bP}_{\lambda^\ast})$]{\includegraphics[width=.32\linewidth]{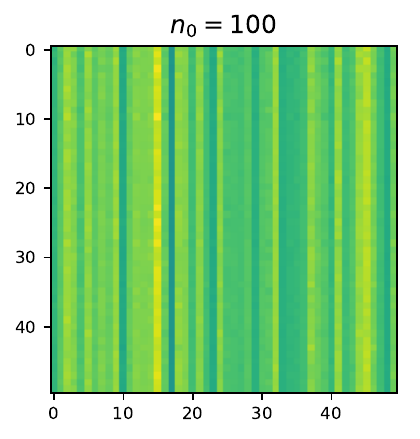}}
  \subfigure[ $\Omega^{-1}(\hat{\bP}_{\mu^\ast,s})$ with $s = 10^{-3}$]{\includegraphics[width=.32\linewidth]{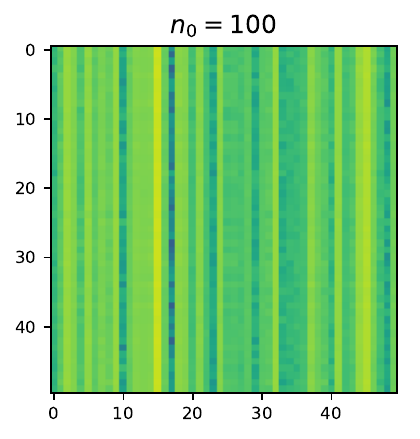}}

  \caption{{\bf (Case 2)} (a-b) Expected KL risk (by Monte Carlo simulations) $\KLA(\hat{\bX}_{\lambda},\bX)$ and $\KLA(\hat{\bX}_{\mu,s},\bX)$ as functions of $\lambda$ and $\mu$ (with $m=200$ and $k=100$) for different mean value $n_0$ of counts per line. (c-g) The upper-left $50 \times 50$ sub-matrices of the parametrization by $\Omega$ of the composition matrix $\bP$ sampled from model \eqref{eq:modcase2}, and the  denoising results (from one realization $\bY$) obtained using the best parameters $\lambda^\ast$ and $\mu^\ast$ minimizing the expected KL risk for $n_0 = 1000$ and $n_0 = 100$.  }
  \label{fig:case2}
\end{figure}

\subsection{Quality of estimation of the expected KL risk}
\label{sec:quality_est_MUKLA}

In this section we evaluate the quality of our approximation of $\widehat{\MUKLA}(\hat{\bX})$
based on Taylor expansion of different orders, Rademacher Monte-Carlo simulations,
and finite difference approximations.

\subsubsection{A Poisson example}

We consider the low-rank sinusoidal matrix $\bX$ defined as \CM{$$X_{ij} = 5 \cos(\frac{i}{k} 6 \pi) \sin(\frac{j}{k} 6 \pi).$$}
In this simulations $m=200$, $k=100$.
We sample a single realization $\bY$ from the Poisson model \eqref{eq:modPoisson},
and we evaluated the risk $\KLA(\hat{\bX}_{\lambda},\bX)$ and its estimations $\widehat{\MUKLA}(\hat{\bX}_{\lambda})$ for the
proposed low-rank variational estimators
over a grid of values for the regularization parameters $\lambda$.
We considered the approximations
based on Taylor expansion that we evaluated up to order $4$.
We considered a single realization of the Rademacher matrices $\bZ^{(\ell)}$
and finite differences as defined in eq.~\eqref{eq:recursive_finite_difference}.

The results are displayed on Figures
\ref{fig:results_poisson_synthetic_sinusoidal}.
We observe that the curve $\lambda \mapsto \KLA(\hat{\bX}_{\lambda},\bX)$ and their estimates $\lambda \mapsto \widehat{\MUKLA}(\hat{\bX}_{\lambda})$ are
closed to each others, as well as their minimizers. We also remark that the
Taylor approximations requires about 2 orders to converge more closely to
the underlying $\KLA$ risk.

\begin{figure}[htbp]
  \centering
  \subfigure[]{\includegraphics[width=.6\linewidth]{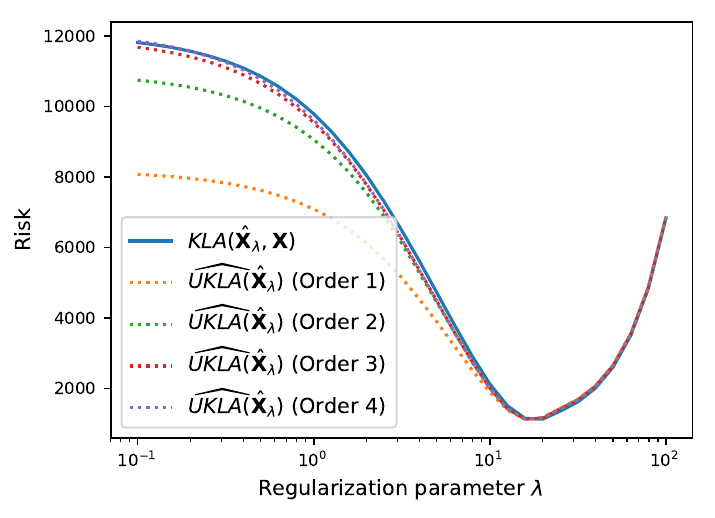}}%
  \subfigure[$\Omega^{-1}(\bP)$ and $\Omega^{-1}(\hat{\bX}_{\lambda})$]{\includegraphics[width=.4\linewidth]{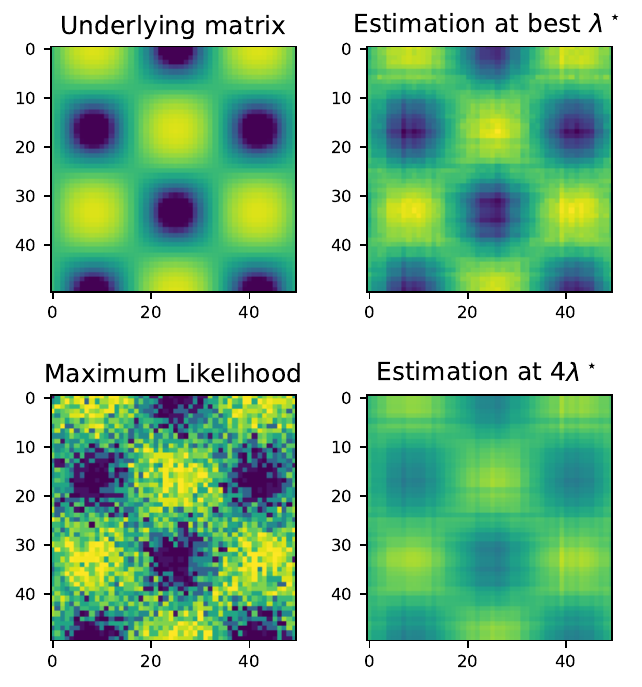}}
  \caption{
    Results of the low-rank variational estimator on Poisson simulated data
    from the $200 \times 100$ from the sinusoidal data.
    (a) Evaluation of $\KLA$ and $\MUKLA$ with respect to the regularization parameter
    $\lambda$ for different  orders in the Taylor expansion.
    (b) The upper-left $50 \times 50$ sub-matrices $\Omega^{-1}(\bP)$,
    the results obtained using the optimal parameter $\lambda^{\ast}$,
     $\lambda = 0$ (Maximum Likelihood), and $4 \lambda^{\ast}$, respectively from top to bottom,
    left to right.
  }
  \label{fig:results_poisson_synthetic_sinusoidal}
\end{figure}

\subsubsection{Multinomial examples}
We consider the two settings ({\bf Case 1} and {\bf Case 2})  of simulated multinomial data
described in Section \ref{sec:comp_low_rank_approaches}.  For each case,
we consider both the simple and the low rank variational estimators.
In these simulations $m=200$, $k=100$ and we again consider $n_1,\ldots,n_m$
sampled from a Poisson distribution with intensity $n_0=400$.
For each scenario, we sample a single realization $\bY$ from the multinomial model \eqref{eq:modMulti}
conditionally on these realizations $n_i$, and we evaluated
the risk $\KLA(\hat{\bX},\bX)$ and its estimation $\widehat{\MUKLA}(\hat{\bX})$ for the two proposed estimators $\hat{\bX} = \hat{\bX}_{w}$ and $\hat{\bX} = \hat{\bX}_{\lambda}$
over different grids of values for the regularization parameters
$w$ and $\lambda$.
For the simple estimator, we considered
the exact value $\widehat{\MUKLA}(\hat{\bX}_{w})$ based on eq.~\eqref{eq:simple_minus_1}, as well as the Rademacher Monte-Carlo approximations
based on Taylor expansion, as given in eq.~\eqref{eq:g_discrete_diff_estimator}, up to order $6$.
For the low rank variational estimator, we considered only the approximations of $\widehat{\MUKLA}(\hat{\bX}_{\lambda})$
based on Taylor expansion that we evaluated up to order $3$.
In both cases, we considered a single realization of the Rademacher matrices $\bZ^{(\ell)}$
and finite differences as defined in eq.~\eqref{eq:recursive_finite_difference}.

The results are displayed on Figures
\ref{fig:results_synthetic_sinusoidal_simple}, \ref{fig:results_synthetic_sinusoidal},
\ref{fig:results_synthetic_composition_simple} and \ref{fig:results_synthetic_composition}.
For the simple estimator, we observe that the curves $\KLA(\hat{\bX}_{w},\bX)$ and the exact estimate $\widehat{\MUKLA}(\hat{\bX}_{w})$ are
closed to each others, as well as their minimizers. Due to the highly
non linear behavior of the simple estimator, we also remark that the
Taylor approximations requires 6 orders to converge more closely to the exact $\widehat{\MUKLA}(\hat{\bX}_{w})$
estimator, hence, fitting the underlying $\KLA(\hat{\bX}_{w},\bX)$ risk.
In contrast, the low rank variational estimator seems to have a smoother
structure and only an order of 2 seems sufficient to
get a satisfying estimate of the true $\KLA(\hat{\bX}_{\lambda},\bX)$ risk by $\widehat{\MUKLA}(\hat{\bX}_{\lambda})$. {\color{black} In Figures
\ref{fig:results_synthetic_sinusoidal_simple}, \ref{fig:results_synthetic_sinusoidal},
\ref{fig:results_synthetic_composition_simple} and \ref{fig:results_synthetic_composition}, we have also reported the values of the CV criterion \eqref{eq:CV}. It can be seen that this criterion does not lead to a consistent estimation of the KL risk. Moreover, the values of the regularization parameters (either $w$ or $\lambda$) selected by minimizing the CV criterion are generally much different from those obtained by minimizing the KL risk or our criteria based on generalized SURE.}

\begin{figure}[htbp]
  \centering
  \subfigure[]{\includegraphics[width=.6\linewidth]{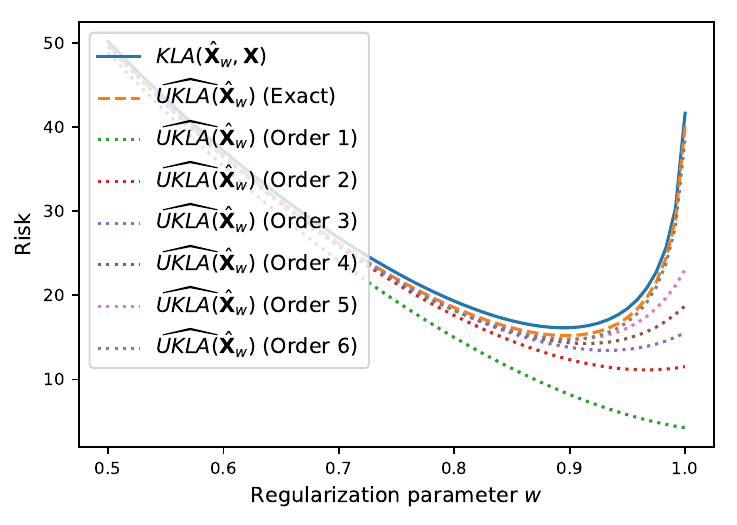}}%
  \subfigure[$\Omega^{-1}(\bP)$ and $\Omega^{-1}(\hat{\bP}_{w})$]{\includegraphics[width=.4\linewidth]{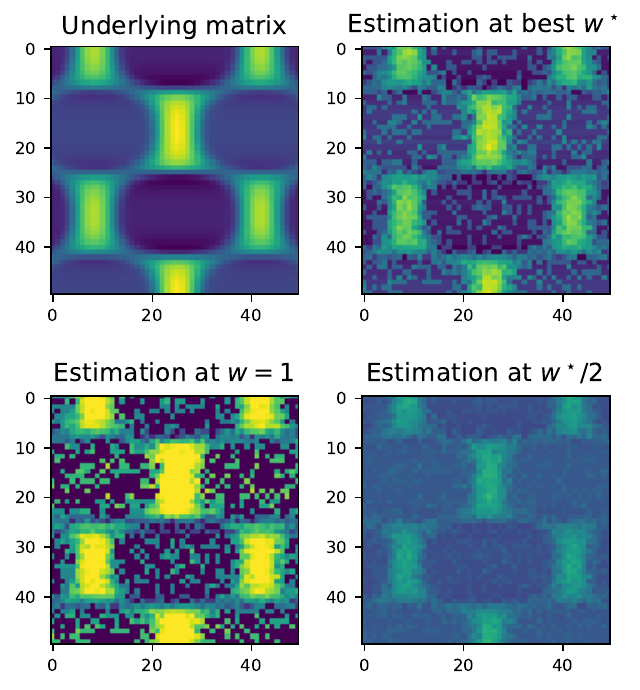}}
    \subfigure[]{\includegraphics[width=.5\linewidth]{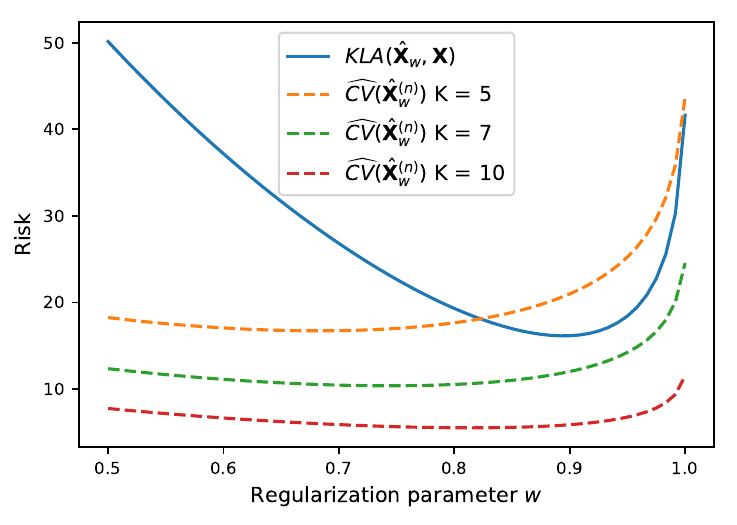}}
  \caption{
    {\bf (Case 1)}. Results of the simple estimator on multinomial simulated data
    with $n_0 = 400$ sampled from the $200 \times 100$ compositional matrix data $\bP$ \eqref{eq:modcase1}.
    (a) Evaluation of $\KLA$ and $\MUKLA$ with respect to the regularization parameter
    $w$ for different  orders in the Taylor expansion.
    (b) The upper-left $50 \times 50$ sub-matrices $\Omega^{-1}(\bP)$,
    the results obtained using the optimal parameter $w^{\ast}$,
    $w=1$ (ML estimation with zero-replacement), and $w^{\ast}/2$, respectively from top to bottom,
    left to right. {\color{black} (c) KL risk and its estimation using $K$-fold CV.}
  }
  \label{fig:results_synthetic_sinusoidal_simple}
\end{figure}

\begin{figure}[htbp]
  \centering
  \subfigure[]{\includegraphics[width=.6\linewidth]{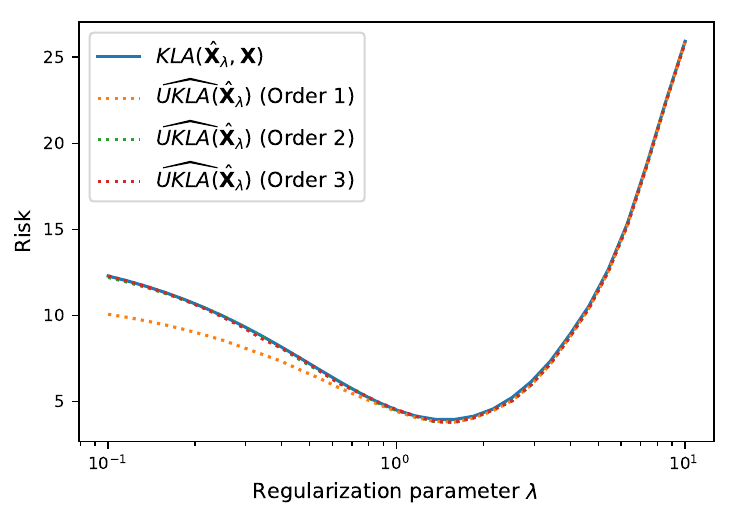}}%
  \subfigure[$\Omega^{-1}(\bP)$ and $\Omega^{-1}(\hat{\bP}_{\lambda})$]{\includegraphics[width=.4\linewidth]{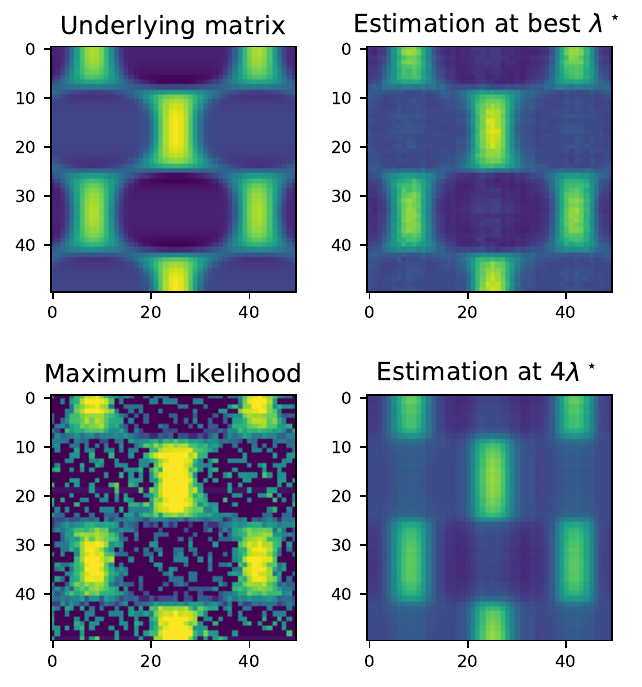}}
   \subfigure[]{\includegraphics[width=.5\linewidth]{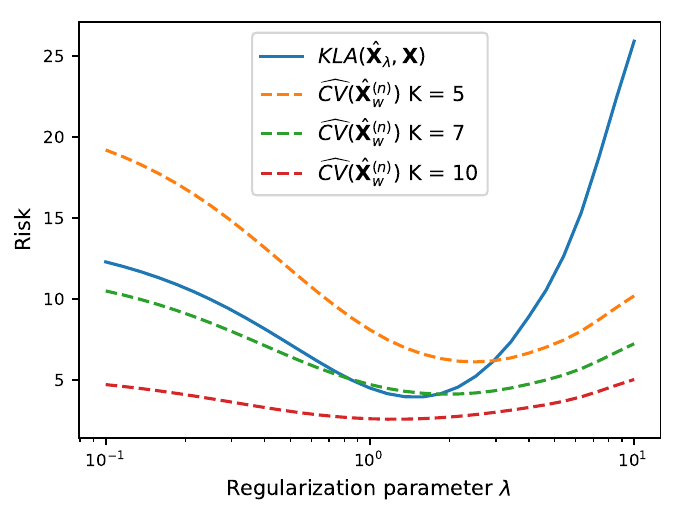}}
  \caption{
    {\bf (Case 1)}.  Results of the low-rank variational estimator on multinomial simulated data
    with $n_0 = 400$ sampled from the $200 \times 100$ compositional matrix data $\bP$ \eqref{eq:modcase1}.
    (a) Evaluation of $\KLA$ and $\MUKLA$ with respect to the regularization parameter
    $\lambda$ for different  orders in the Taylor expansion.
    (b) The upper-left $50 \times 50$ sub-matrices of the
    underlying matrix $\Omega^{-1}(\bP)$,
    the results obtained using the optimal parameter $\lambda^{\ast}$,
    $\lambda = 0$ (Maximum Likelihood), and $4 \lambda^{\ast}$, respectively from top to bottom,
    left to right. {\color{black} (c) KL risk and its estimation using $K$-fold CV.}
  }
  \label{fig:results_synthetic_sinusoidal}
\end{figure}

\begin{figure}[htbp]
  \centering
  \subfigure[]{\includegraphics[width=.6\linewidth]{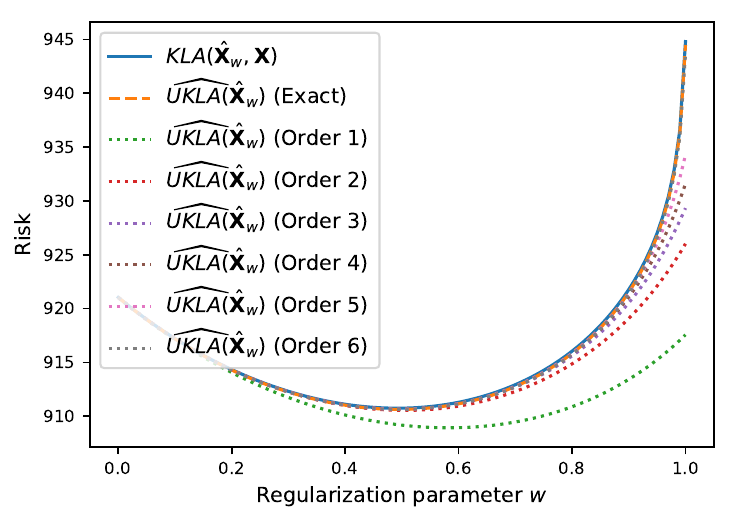}}%
  \subfigure[$\Omega^{-1}(\bP)$ and $\Omega^{-1}(\hat{\bP}_{w})$]{\includegraphics[width=.4\linewidth]{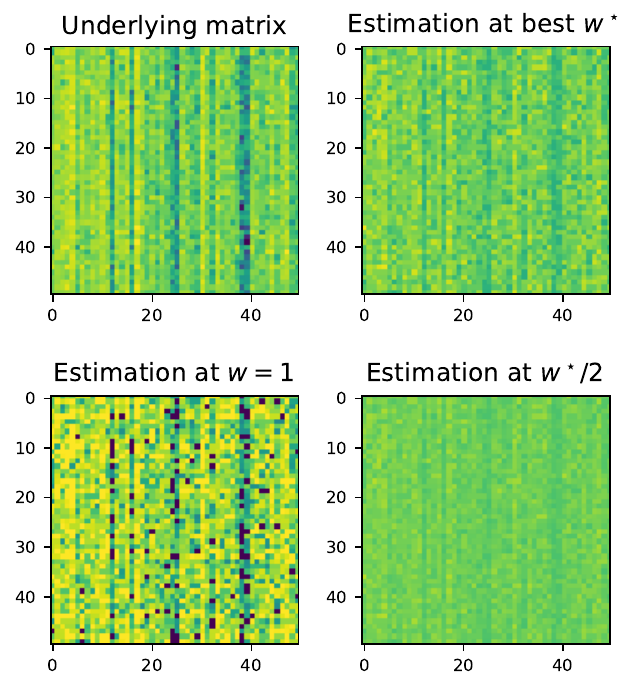}}
   \subfigure[]{\includegraphics[width=.5\linewidth]{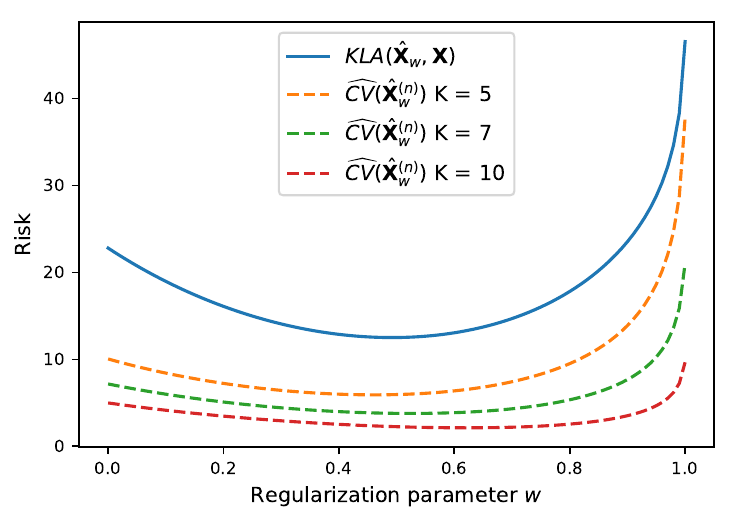}}
  \caption{
    {\bf (Case 2)}. Results of the simple estimator on multinomial simulated data
    with $n_0 = 400$ sampled from the $200 \times 100$ compositional matrix data $\bP$ \eqref{eq:modcase2}.
    (a) Evaluation of $\KLA$ and $\MUKLA$ with respect to the regularization parameter
    $w$ for different  orders in the Taylor expansion.
    (b) The upper-left $50 \times 50$ sub-matrices of the
    underlying matrix $\Omega^{-1}(\bP)$,
    the results obtained using the optimal parameter $w^{\ast}$,
    $w = 1$ (ML estimation with zero replacement), and $w^{\ast}/2$, respectively from top to bottom,
    left to right.  {\color{black} (c) KL risk and its estimation using $K$-fold CV.}
  }
  \label{fig:results_synthetic_composition_simple}
\end{figure}

\begin{figure}[htbp]
  \centering
  \subfigure[]{\includegraphics[width=.6\linewidth]{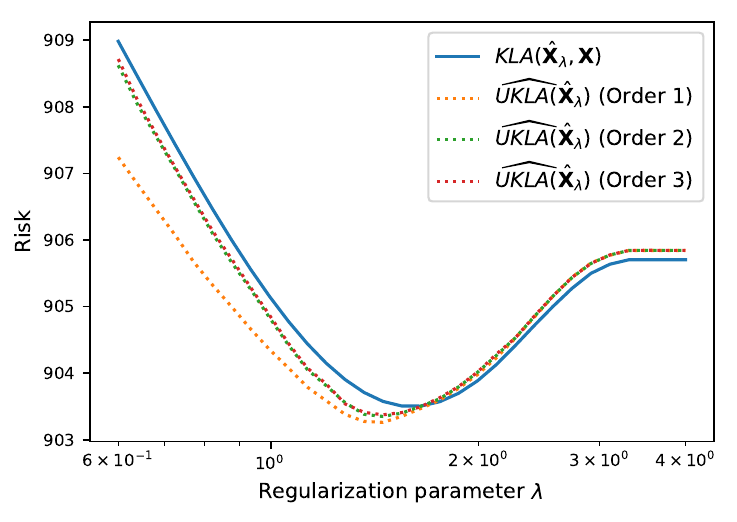}}%
  \subfigure[$\Omega^{-1}(\bP)$ and $\Omega^{-1}(\hat{\bP}_{\lambda})$]{\includegraphics[width=.4\linewidth]{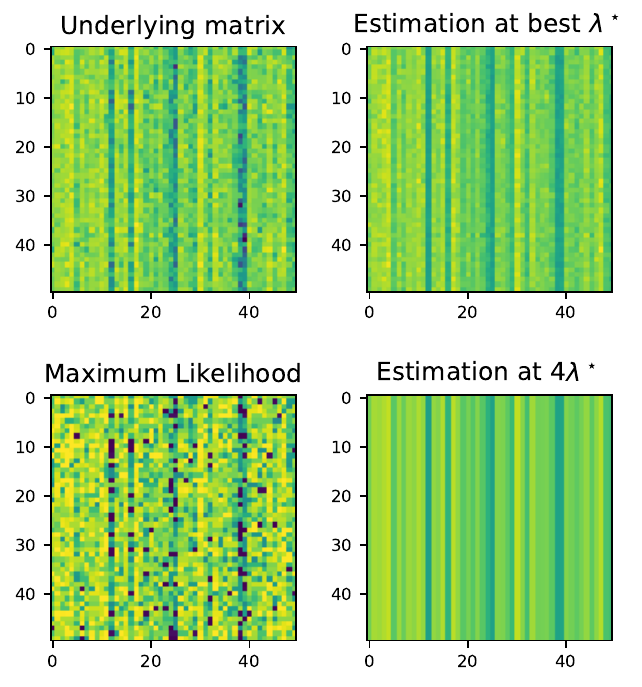}}
  \subfigure[]{\includegraphics[width=.5\linewidth]{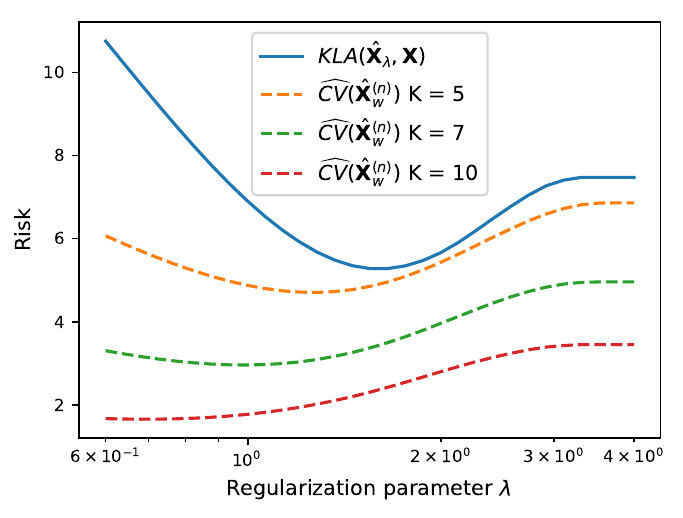}}
  \caption{
     {\bf (Case 2)} Results of the low-rank variational estimator on multinomial simulated data
    with $n_0 = 400$ sampled from the $200 \times 100$ compositional matrix data $\bP$ \eqref{eq:modcase2}.
    (a) Evaluation of $\KLA$ and $\MUKLA$ with respect to the regularization parameter
    $\lambda$ for different  orders in the Taylor expansion.
    (b) The upper-left $50 \times 50$ sub-matrices of the
    underlying matrix $\Omega^{-1}(\bP)$,
    the results obtained using the optimal parameter $\lambda^{\ast}$,
    $\lambda=0$ (Maximum Likelihood), and $4 \lambda^{\ast}$, respectively from top to bottom,
    left to right.  {\color{black} (c) KL risk and its estimation using $K$-fold CV.}
  }
  \label{fig:results_synthetic_composition}
\end{figure}

\CM{
\subsubsection{Influence of the number $T$ of iterations  of the FISTA algorithm}

To conclude these numerical experiments with simulated data, we discuss  the influence of the number $T$ of iterations  of the FISTA algorithm used to compute  the low rank variational estimator $\hat{\bX}_{\lambda}$. To this end, we consider the first setting ({\bf Case 1})  of simulated multinomial data described in Section \ref{sec:comp_low_rank_approaches} with $(m,k) \in \{ (100,50) ; (200,100) ; (400,200) \}$ and $n_0 = 400$. In this setting, we  analyze how the choice of $T$ affects the value of the risk  $\KLA(\hat{\bX}_{\lambda},\bX)$ and the bias of the estimator $\widehat{\MUKLA}(\hat{\bX}_{\lambda})$  with respect to the regularization parameter $\lambda$. The approximation of  $\widehat{\MUKLA}(\hat{\bX}_{\lambda})$ is based on Taylor expansion that we  again evaluated up to order $3$ with a single realization of the Rademacher matrices $\bZ^{(\ell)}$ and finite differences as defined in eq.~\eqref{eq:recursive_finite_difference}.
The results are reported in Figures \ref{fig:results_synthetic_sinusoidal_T_m100_k50}, \ref{fig:results_synthetic_sinusoidal_T_m200_k100} and \ref{fig:results_synthetic_sinusoidal_T_m400_k200} for $T \in \{5,10,50,100\}$. For all values of  $(m,k)$, we observe than increasing $T$ allows to decrease the value of the risk  $\KLA(\hat{\bX}_{\lambda},\bX)$, and that the choice of $T$ also affects the value of $\lambda$ minimizing  $\KLA(\hat{\bX}_{\lambda},\bX)$ and  $\widehat{\MUKLA}(\hat{\bX}_{\lambda})$ (for any value of the order of differentiation). Moreover, we have observed that choosing $T > 100$ does not yield a significantly smallest value of the risk $\KLA(\hat{\bX}_{\lambda},\bX)$ and the estimator $\widehat{\MUKLA}(\hat{\bX}_{\lambda})$. Therefore, for $T > 100$, the value of $\lambda$ minimizing $\widehat{\MUKLA}(\hat{\bX}_{\lambda})$  (for any value of the order of differentiation) remains the same.  Hence, from these numerical results, it appears that choosing $T = 100$ is typically a sufficient large number of  iterations to ensure that  $\widehat{\MUKLA}(\hat{\bX}_{\lambda})$ has a small bias in the sense that it is numerically close to the true value of  $\KLA(\hat{\bX}_{\lambda},\bX)$. Finally, we have not found any significant influence of the choice of $(m,k)$ on the performances of the algorithm apart from the fact that  taking larger values of $m$ and $k$  increases its computational cost.
}

\begin{figure}[htbp]
  \centering
  \subfigure[$T=5$]{\includegraphics[width=.48\linewidth]{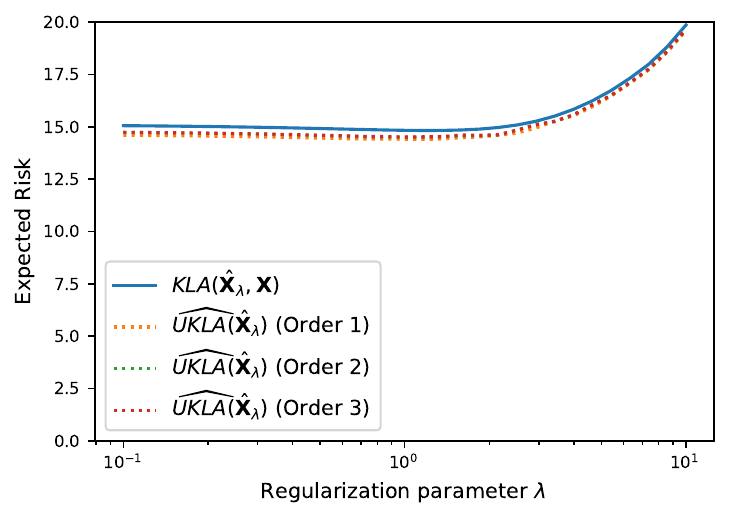}}
  \subfigure[$T=10$]{\includegraphics[width=.48\linewidth]{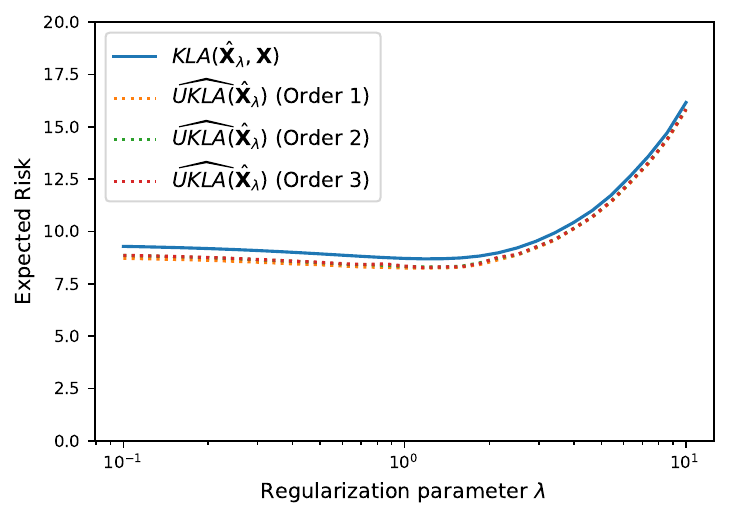}}
  \subfigure[$T=50$]{\includegraphics[width=.48\linewidth]{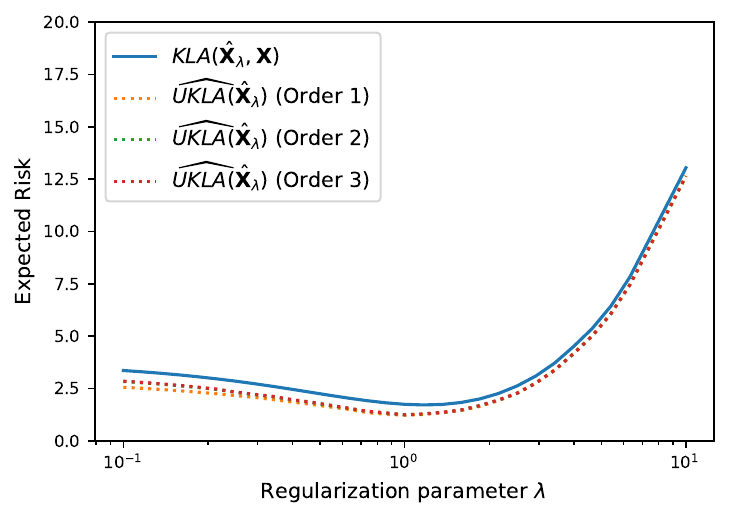}}
  \subfigure[$T=100$]{\includegraphics[width=.48\linewidth]{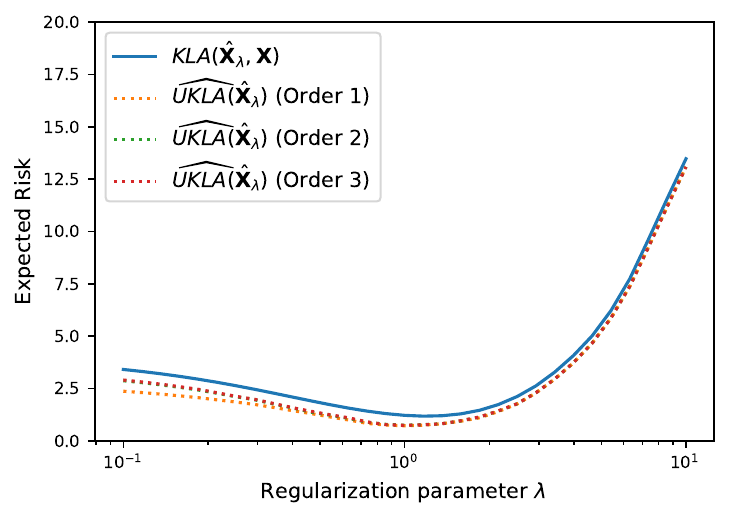}}
  \caption{
    \CM{{\bf (Case 1) - $(m,k) = (100,50)$}.  Results of the low-rank variational estimator on multinomial simulated data with $n_0 = 400$ sampled from the $100 \times 50$ compositional matrix data $\bP$ \eqref{eq:modcase1}.
 Evaluation of $\KLA$ and $\MUKLA$ with respect to the regularization parameter
    $\lambda$ for different orders in the Taylor expansion, and various values of the number $T$ of iterations  of the FISTA algorithm.}}
  \label{fig:results_synthetic_sinusoidal_T_m100_k50}
\end{figure}

\begin{figure}[htbp]
  \centering
  \subfigure[$T=5$]{\includegraphics[width=.48\linewidth]{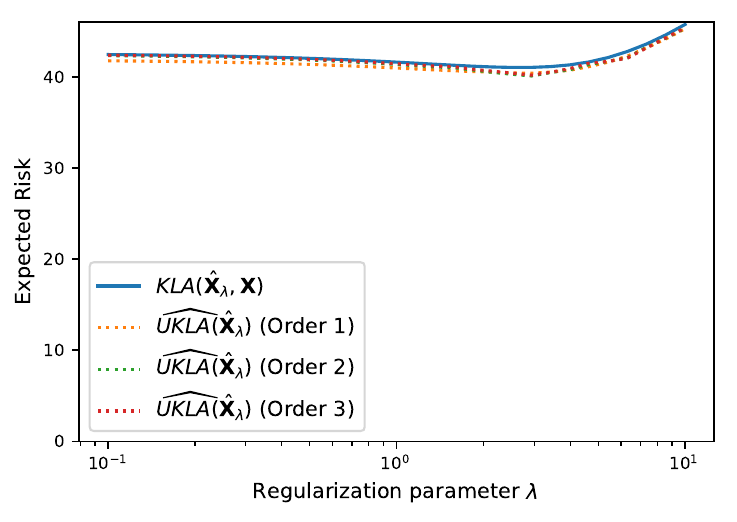}}
  \subfigure[$T=10$]{\includegraphics[width=.48\linewidth]{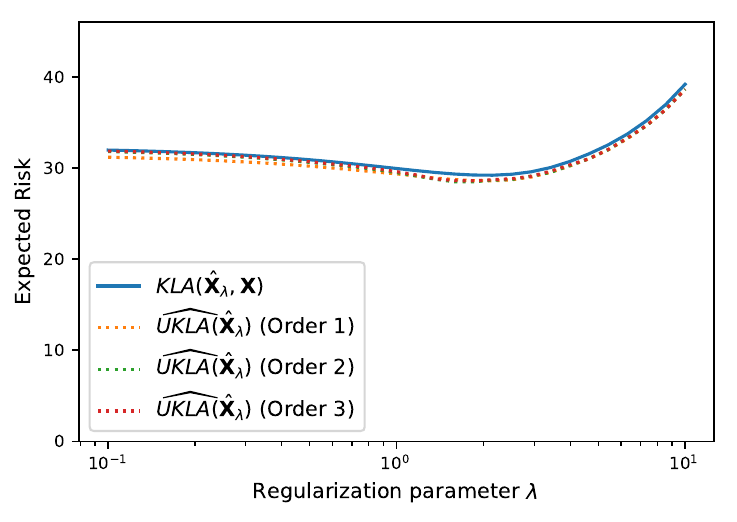}}
  \subfigure[$T=50$]{\includegraphics[width=.48\linewidth]{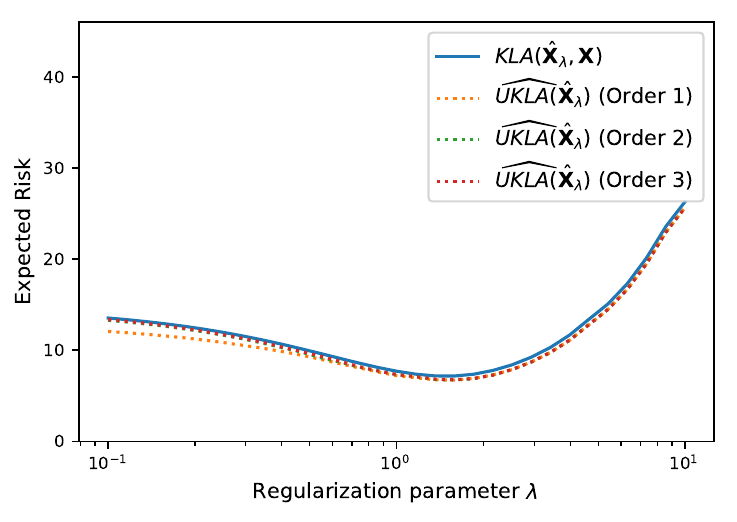}}
  \subfigure[$T=100$]{\includegraphics[width=.48\linewidth]{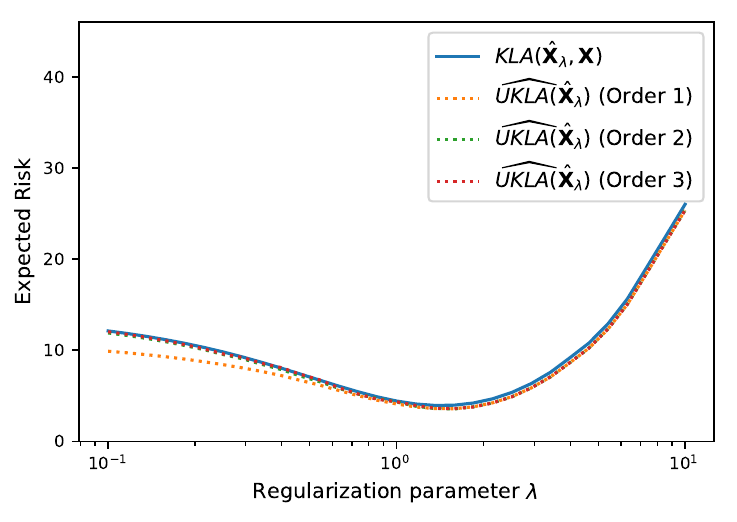}}
  \caption{
    \CM{{\bf (Case 1) - $(m,k) = (200,100)$}.  Results of the low-rank variational estimator on multinomial simulated data with $n_0 = 400$ sampled from the $200 \times 100$ compositional matrix data $\bP$ \eqref{eq:modcase1}.
 Evaluation of $\KLA$ and $\MUKLA$ with respect to the regularization parameter
    $\lambda$ for different orders in the Taylor expansion, and various values of the number $T$ of iterations  of the FISTA algorithm.}}
  \label{fig:results_synthetic_sinusoidal_T_m200_k100}
\end{figure}

\begin{figure}[htbp]
  \centering
  \subfigure[$T=5$]{\includegraphics[width=.48\linewidth]{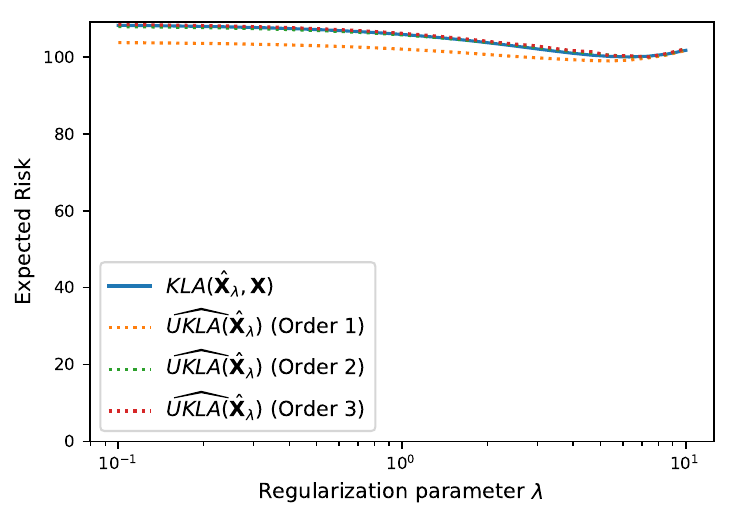}}
  \subfigure[$T=10$]{\includegraphics[width=.48\linewidth]{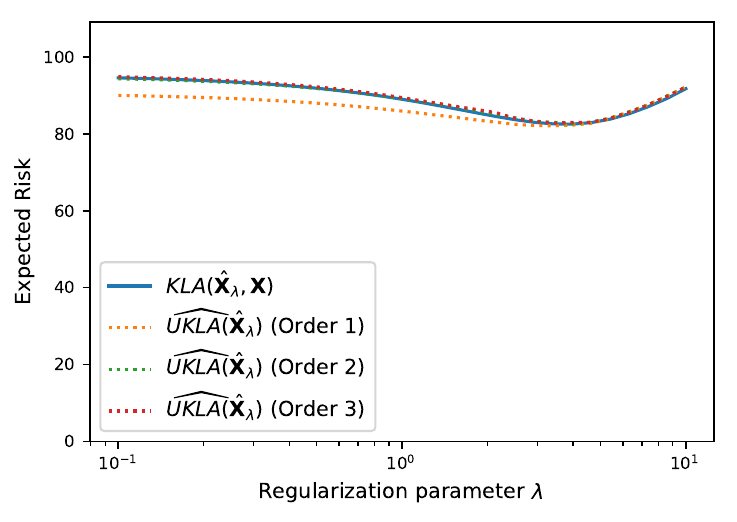}}
  \subfigure[$T=50$]{\includegraphics[width=.48\linewidth]{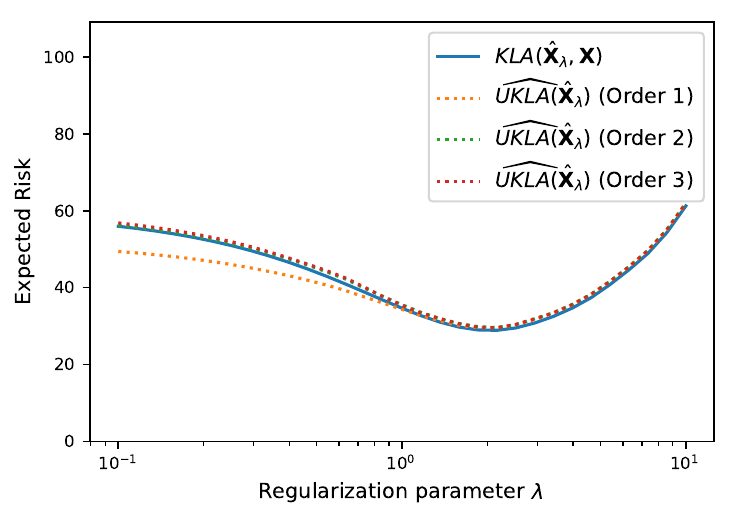}}
  \subfigure[$T=100$]{\includegraphics[width=.48\linewidth]{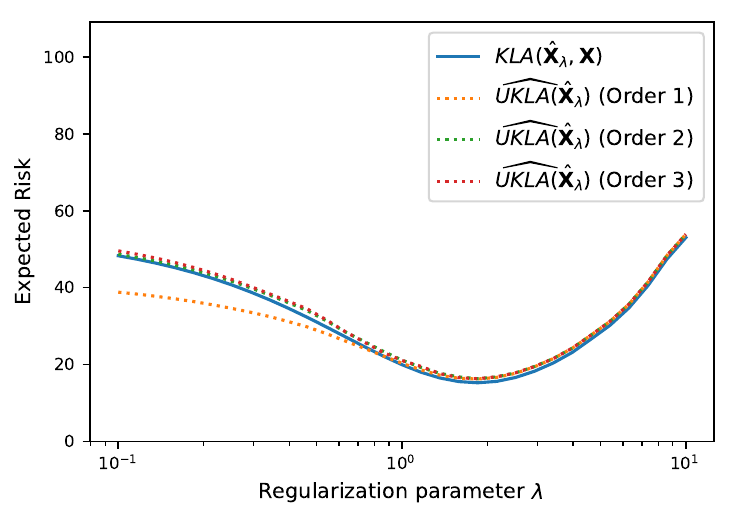}}
  \caption{
    \CM{{\bf (Case 1) - $(m,k) = (400,200)$}.  Results of the low-rank variational estimator on multinomial simulated data with $n_0 = 400$ sampled from the $400 \times 200$ compositional matrix data $\bP$ \eqref{eq:modcase1}.
 Evaluation of $\KLA$ and $\MUKLA$ with respect to the regularization parameter
    $\lambda$ for different orders in the Taylor expansion, and various values of the number $T$ of iterations  of the FISTA algorithm.}}
  \label{fig:results_synthetic_sinusoidal_T_m400_k200}
\end{figure}

\subsection{Analysis of real data}

\subsubsection{Tripadvisor's hotel reviews}

\newcommand{\up}[1]{\hspace{-5em}\makebox[5em][r]{$\color{green}#1\uparrow\quad$}}
\newcommand{\down}[1]{\hspace{-5em}\makebox[5em][r]{$\color{red}#1\downarrow\quad$}}

\begin{table}[htbp]
  \caption{Thirty most frequent words, ordered by decreasing frequencies, used to described the Tripadvisor's hotel review data set according to the Maximum Likelihood estimator.}
  \label{tab:tripadvisor_most_freq_ML}
  \vspace{.5em}
  \centering
  \begin{tabular}{ccp{2em}ccp{2em}cc}
    \cline{1-2} \cline{4-5} \cline{7-8}
    frequency (\%) & word && frequency (\%) & word && frequency (\%) & word\\
    \cline{1-2} \cline{4-5} \cline{7-8}
    3.86 &      hotel && 1.03 &      locat && 0.70 &       walk\\
    3.41 &       room && 0.97 &       nice && 0.67 &      place\\
    2.17 &       veri && 0.94 &       time && 0.66 &       also\\
    2.11 &        not && 0.91 &        day && 0.66 &      other\\
    2.03 &       stay && 0.87 &       just && 0.65 &       make\\
    1.60 &      great && 0.78 &     servic && 0.64 &       well\\
    1.38 &       good && 0.75 &      clean && 0.64 &  breakfast\\
    1.36 &        get && 0.74 &      beach && 0.62 &       food\\
    1.13 &      staff && 0.72 &       onli && 0.62 &     friend\\
    1.03 &      night && 0.71 &    restaur && 0.61 &       pool\\
    \cline{1-2} \cline{4-5} \cline{7-8}
  \end{tabular}
  \vspace{1em}
  \centering
  \caption{Thirty most frequent words, ordered by decreasing frequencies, used to described the Tripadvisor's hotel review data set according to our low rank variational estimator.
    Green and red arrows indicate changes of ranking compared to the Maximum Likelihood estimator.
  }
  \label{tab:tripadvisor_most_freq_LR}
  \vspace{.5em}
  \begin{tabular}{ccp{2em}ccp{2em}cc}
    \cline{1-2} \cline{4-5} \cline{7-8}
    frequency (\%) & word && frequency (\%) & word && frequency (\%) & word\\
    \cline{1-2} \cline{4-5} \cline{7-8}
    3.86 &      hotel && \down{-1}1.02 &      night && \down{-1}0.69 &    restaur \\
    3.40 &       room && 0.96 &       nice && 0.66 &      place \\
    2.16 &       veri && 0.92 &       time && 0.64 &       also \\
    2.10 &        not && 0.89 &        day && 0.64 &      other \\
    2.03 &       stay && 0.85 &       just && 0.63 &       make \\
    1.59 &      great && 0.76 &     servic && \up{+1}0.62 &  breakfast \\
    1.37 &       good && \up{+1}0.76 &      beach && \down{-1}0.62 &       well \\
    1.34 &        get && \down{-1}0.73 &      clean && \up{+2}0.61 &       pool \\
    1.12 &      staff && 0.70 &       onli && \up{+3}0.61 &     resort \\
    \up{+1}1.02 &      locat && \up{+1}0.69 &       walk && \down{-1}0.60 &     friend \\
    \cline{1-2} \cline{4-5} \cline{7-8}
  \end{tabular}
\end{table}
 
\begin{table}[htbp]
  \caption{Thirty most co-occurrent words, ordered by decreasing cosine, used to described
    the Tripadvisor's hotel review data set according to the Maximum Likelihood estimator.}
  \label{tab:tripadvisor_most_corr_ML}
  \centering
  \vspace{.5em}
  \begin{tabular}{cccp{2em}cccc}
    \cline{1-3} \cline{5-7}
    cosine & word1 & word2 && correlation & word1 & word2\\
    \cline{1-3} \cline{5-7}
    0.83 &      front &       desk && 0.63 &  recommend &       help \\
    0.79 &      train &    station && 0.63 &    distanc &       walk \\
    0.77 &       pool &       swim && 0.63 &  entertain &      vacat \\
    0.76 &     flight &    airport && 0.63 &        tip &      vacat \\
    0.75 &      vacat &     resort && 0.62 &     resort &  entertain \\
    0.72 &      beach &      ocean && 0.61 &      staff &       help \\
    0.71 &        kid &      child && 0.61 &       call &       tell \\
    0.70 &     resort &       food && 0.61 &     resort &      beach \\
    0.70 &       food &      lunch && 0.61 &       desk &       call \\
    0.68 &      drink &       food && 0.61 &       help &     friend \\
    0.67 &     ground &     resort && 0.61 &       love &     wonder \\
    0.66 &      vacat &      beach && 0.60 &      peopl &      vacat \\
    0.66 &       read &     review && 0.60 &  entertain &        tip \\
    0.65 &        bad &        not && 0.60 &       food &      vacat \\
    0.65 &      lunch &     resort && 0.60 &        kid &     famili \\
    \cline{1-3} \cline{5-7}
  \end{tabular}
 
 \end{table}
 
 \begin{table}[htbp]
  
  \caption{Thirty most co-occurrent words, ordered by decreasing cosine, used to described
    the Tripadvisor's hotel review data set according to our low rank variational estimator.
    Green and red arrows indicate changes of ranking compared to the Maximum Likelihood estimator.}
  \label{tab:tripadvisor_most_corr_LR}
  
  \begin{tabular}{cccp{2em}cccc}
    \cline{1-3} \cline{5-7}
    cosine & word1 & word2 && correlation & word1 & word2\\
    \cline{1-3} \cline{5-7}
    \up{+27}0.91 &  entertain &        tip && \up{+168}0.83 &      bring &       show \\
    \down{-1}0.90 &      front &       desk && \down{-15}0.83 &      train &    station \\
    \up{+6}0.88 &      lunch &       food && \up{+131}0.82 &  entertain &      bring \\
    \up{+15}0.87 &      vacat &        tip && \down{-7}0.81 &     resort &     ground \\
    \down{-2}0.87 &       swim &       pool && \down{-6}0.81 &     review &       read \\
    \up{+64}0.87 &        tip &      bring && \up{+27}0.81 &      vacat &      chair \\
    0.87 &        kid &      child && \up{+164}0.80 &      lunch &      bring \\
    \up{+10}0.87 &      vacat &  entertain && \up{+11}0.80 &      vacat &       week \\
    \up{+51}0.87 &  entertain &       show && \up{+20}0.80 &  entertain &      drink \\
    \up{+21}0.86 &      bring &      vacat && \up{+60}0.80 &  entertain &       week \\
    \up{+87}0.85 &        tip &       show && \up{+76}0.79 &       show &      vacat \\
    \up{+3}0.85 &     resort &      lunch && \down{-17}0.79 &       food &      drink \\
    \down{-8}0.84 &      vacat &     resort && \up{+21}0.79 &       meal &       food \\
    \up{+20}0.84 &      vacat &      lunch && \up{+110}0.79 &        tip &      lunch \\
    \down{-11}0.83 &     flight &    airport && \up{+32}0.79 &      fresh &      fruit \\
    \cline{1-3} \cline{5-7}
  \end{tabular}
\end{table}

\begin{figure}[htbp]
  \centering
  \subfigure[]{\includegraphics[width=.6\linewidth]{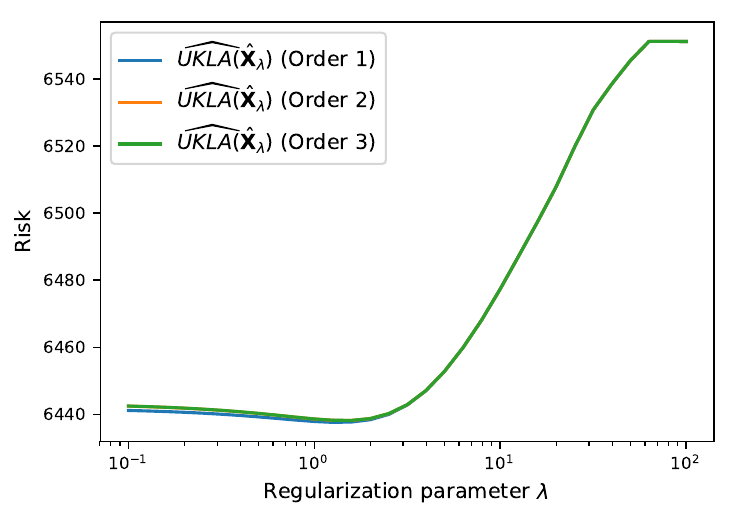}}%
  \subfigure[$\Omega^{-1}(\hat{\bP}_{\lambda})$]{\includegraphics[width=.4\linewidth]{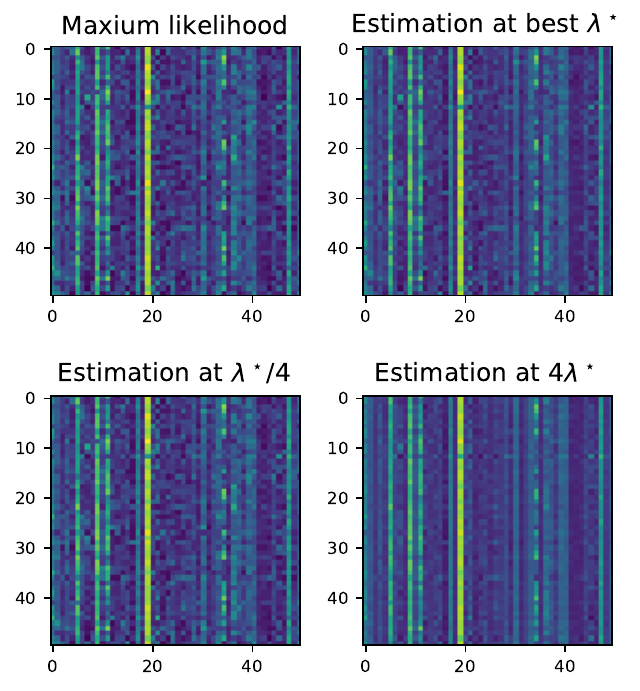}}
   \subfigure[]{\includegraphics[width=.6\linewidth]{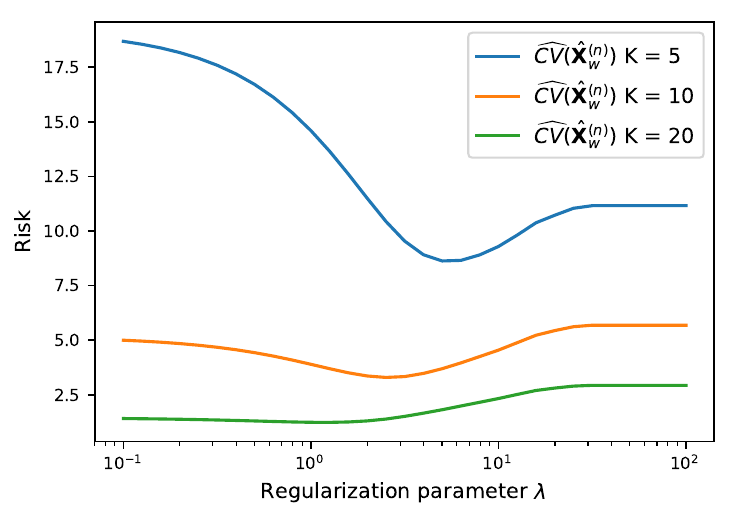}}
  \caption{
    Results of the low-rank variational estimator on Tripadvisor's hotel review data.
    (a) Evaluation of $\MUKLA$ with respect to the regularization parameter
    $\lambda$ for different  orders in the Taylor expansion.
    (b) The upper-left $50 \times 50$ sub-matrices obtained
    by the Maximum Likelihood estimator ($\lambda =0$),
    the results obtained using the optimal parameter $\lambda^{\ast}$,
    $\lambda^{\ast}/4$ and $4 \lambda^{\ast}$, respectively from top to bottom,
    left to right.  {\color{black} (c) Estimation of the KL risk using $K$-fold CV.}
  }
  \label{fig:results_tripadvisor}
\end{figure}

We first consider a survey study related to the reviews by clients of $m$ hotel  where
for each hotel $1 \leq i \leq m$ we count the number of occurrences
of each words (among $k$ words) and $n_i$ is the number of
words for the review of the $i$-th hotel.
We considered the TripAdvisor Data Set described
in \cite{wang2010latent,wang2011latent} that is freely available
at \url{http://times.cs.uiuc.edu/~wang296/Data/}.
This dataset contains reviews for 1,760 hotels.

We extracted all English nouns, verbs (except auxiliaries and modals),
adjectives and adverbs present in these reviews.
We used \texttt{WordNet} lemmatizer algorithm \cite{miller1995wordnet}
to convert words such as ``better'' into ``good'', and ``eating'' into ``eat''.
We kept only words that were at least three characters long and we
used \texttt{Snowball} stemmer algorithm \cite{snowball} to replace words
such as ``beautiful'' and ``beauty'' into their prefix ``beauti''.
Next, we kept only words used at least 10,000 times, and hotels described
by at least 2,000 words, leading to a dictionary of $k=339$ distinct
words describing $m=1,223$ hotels. We finally build the matrix $\bY$ containing
the number of occurrence of each of the $k$ words for each of the $m$ hotels.
The total number of counts is $\sum_{i=1}^m \sum_{j=1}^k \bY_{ij} = 11\;027\;432$.

Given the occurrence matrix $\bY$ we estimated $\bP$ using the proposed
low-rank variational estimator for different values of regularization
parameter $\lambda$. For each such parameter $\lambda$, we evaluated
our risk estimate $\widehat{\MUKLA}(\hat{\bX}_{\lambda})$ based on Taylor expansion up to order 3,
Rademacher Monte-Carlo estimation, and finite difference approximation.
Results are given on Figure \ref{fig:results_tripadvisor}.
Consistently with the simulations performed in Section \ref{sec:quality_est_MUKLA},
we observe that using an order of 2 in the Taylor expansion is sufficient
for the computation of $\widehat{\MUKLA}(\hat{\bX}_{\lambda})$ (increasing the order does not change the estimation).
As expected, the curves of $\lambda \mapsto \widehat{\MUKLA}(\hat{\bX}_{\lambda})$ indicate that the optimal value
for the parameter $\lambda$ should neither be chosen too small nor too large,
and is $\lambda^\star \approx 1.5$. {\color{black} Figure \ref{fig:results_tripadvisor}(c) shows that   selecting  $\lambda$ by minimizing the CV criterion leads to the choice of a larger value of this regularization parameter, and thus to a smoother estimation than the one obtained by minimizing our criteria based on generalized SURE.}

In order to evaluate on these data the quality of an estimator $\hat{\bP}$ of
$\bP$, we will measure the
frequencies of each word of index $1 \leq j \leq k$ as
\begin{align}
  &\hat{f}_j = \frac{\sum_{i=1}^m n_i \hat{P}_{ij}}{\sum_{i=1}^k n_i \sum_{j'=1}^k \hat{P}_{i,j'}}~.
\end{align}
and the co-occurrence of pairs of words of
indices $1 \leq j_1, j_2 \leq k$ measured by the (centered) cosines of columns of $\hat{\bP}$ and given as
\begin{align}
  \hat{c}_{j_1,j_2} = \frac{
    \sum_{i=1}^m
    (\hat{P}_{i,j_1} - \frac1m \sum_{i'=1}^m \hat{P}_{i',j_1})
    (\hat{P}_{i,j_2} - \frac1m \sum_{i'=1}^m \hat{P}_{i',j_2})
  }{
    \sqrt{
    \sum_{i=1}^m
    (\hat{P}_{i,j_1} - \frac1m \sum_{i'=1}^m \hat{P}_{i',j_1})^2
    }
    \sqrt{
    \sum_{i=1}^m
    (\hat{P}_{i,j_2} - \frac1m \sum_{i'=1}^m \hat{P}_{i',j_2})^2
    }
  }~.
\end{align}
We compare these statistics for two estimators of $\bP$: the Maximum Likelihood
$\hat{\bP}^{\rm ML} = \diag(n_1, \ldots, n_m)^{-1} \bY$,
and the low rank variational estimator
$\hat{\bP}_{\lambda^\star}$.
Table \ref{tab:tripadvisor_most_freq_ML} and \ref{tab:tripadvisor_most_freq_LR}
show respectively the list of the thirty most frequent words according to
both the Maximum Likelihood estimator and the low rank variational estimator.
Table \ref{tab:tripadvisor_most_corr_ML} and \ref{tab:tripadvisor_most_corr_LR}
show respectively the list of the thirty most co-occurrent pairs of words according to
both the Maximum Likelihood estimator and the low rank variational estimator.
Regarding frequency analysis, unsurprisingly ``hotel'' and ``room'' appears as
the most frequent words for both estimators, and only subtle differences seem to exist.
Regarding the co-occurrence analysis, unsurprisingly (``front'', ``desk'') and
(``train'', ``station'') appears highly co-occurrent for both estimators, but
we observe that correlations are significantly reinforced for the low rank
variational estimator, as well as co-occurrence for words such as ``bring'', ``tip'',
``entertain'' and ``show''.

\subsubsection{Metagenomics data}

We propose to apply our approach to the analysis of metagenomics data
on a Cross-sectional study Of diet and stool MicroBiOme (COMBO) composition
\cite{Wu105} that have been studied in \cite{Cao17}. In this study,
DNAs from stool samples of $m=98$ healthy volunteers were analyzed by
metagenomics sequencing which yields an average of 9265 reads per
sample (with a standard deviation of 386) and led to identifying
$k=87$ bacterial genera presented in at least one sample. As argued in
\cite{Cao17} the resulting count data matrix has many zeros which are
likely due to under-sampling. The analysis in \cite{Cao17} shows that  supposing that the true
composition matrix is approximately low rank is a reasonable assumption.

Our approach by  regularized maximum likelihood estimation is then applied to the resulting
count data matrix $\bY$ ($k = 87$ bacterial genera over $m = 98$ samples) to estimate the unknown composition matrix $\bP$  using different values of the regularization
parameter $\lambda$. For each value of $\lambda$, we evaluated
the unbiased risk estimate $\widehat{\MUKLA}(\hat{\bX}_{\lambda})$ based on a Taylor expansion {\color{black} up to order 6},
Rademacher Monte-Carlo estimation, and finite difference approximation.
The results are displayed on Figure \ref{fig:results_BMI}.
We have observed that {\color{black} using an order of 1} in the Taylor expansion is sufficient
for the approximation of $\widehat{\MUKLA}(\hat{\bX}_{\lambda})$ as increasing the order does not change the results.

It can be seen that  the curve $\lambda \mapsto \widehat{\MUKLA}(\hat{\bX}_{\lambda})$ is increasing, which indicates that the optimal value for the regularization parameter  is $\lambda^\star = 0$. Hence, this suggests that the best approach for this dataset is to do ML estimation without any regularization. 
This (surprising)  result can be explained by the fact that, in each row of the data matrix $\bY$,  a few columns contain very large counts which suggest that, for each row $i$ of the underlying composition matrix $\bP$, a few entries  $p_{ij}$ have  large values  while all the others are close to zero. In the setting of modeling such data as being sampled from a multinomial distribution, the un-regularized ML approach thus yields an estimator with the smallest KL risk.  {\color{black} Surprisingly,  Figure \ref{fig:results_BMI}(c,d) show that   selecting  $\lambda$ by minimizing $\lambda \mapsto \widehat{\CV}(\hat{\bX}_{\lambda}) $  leads to choose larger values of this regularization parameter, and thus cross-validation yields an estimator that is much smoother than ML estimation as displayed in  Figure \ref{fig:results_BMI}(b).}

Finally, we highlight the potential benefits of low-rank regularization for this data set as follows by proceeding as in \cite[Section 6]{Cao17}. We display in Figure \ref{fig:results_BMI_boxplot} the boxplots of the values (in logarithmic scale) of each column  of the estimator $\hat{\bX}_{\lambda}$ for different $\lambda \in \{0,4.64,100\}$ by distinguishing, for each $1 \leq j \leq k$, the rows $i$ of $\hat{\bX}_{\lambda}$ that belong either to the set $\Omega_{j}$ or $\Omega_{j}^{c}$ where counts are positive or zero in the data matrix that is
$$
\Omega_{j} = \{i \; : \; \bY_{ij} > 0 \} \mbox{ and } \Omega_{j}^{c} = \{i \; : \; \bY_{ij} = 0 \}.
$$
For  $\lambda \in \{4.64,100\}$ and each $1 \leq  j \leq k$, the values of $\hat{\bX}_{\lambda}$ in $\Omega_{j}^{c}$ are shrank towards those in $\Omega_{j}$ showing that taking increasing values of $\lambda > 0$ allows to perform zero-replacement in a data-driven manner.

\begin{figure}[htbp]
  \centering
  \subfigure[]{\includegraphics[width=.6\linewidth]{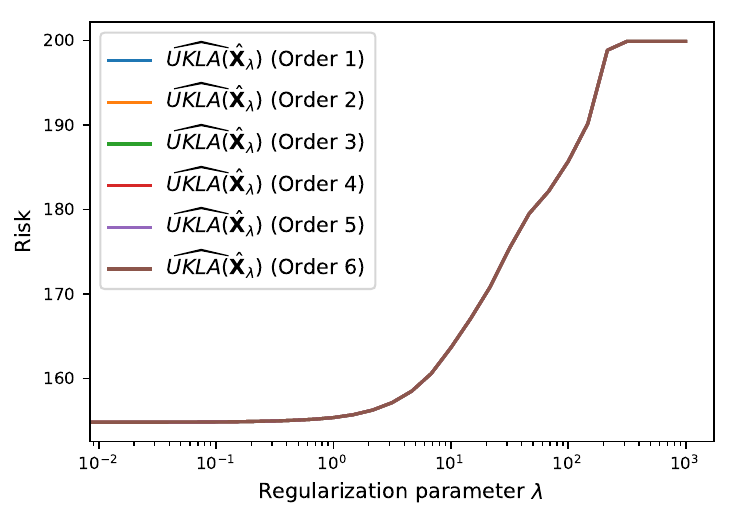}}%
  \subfigure[$\Omega^{-1}(\hat{\bP}_{\lambda})$]{\includegraphics[width=.4\linewidth]{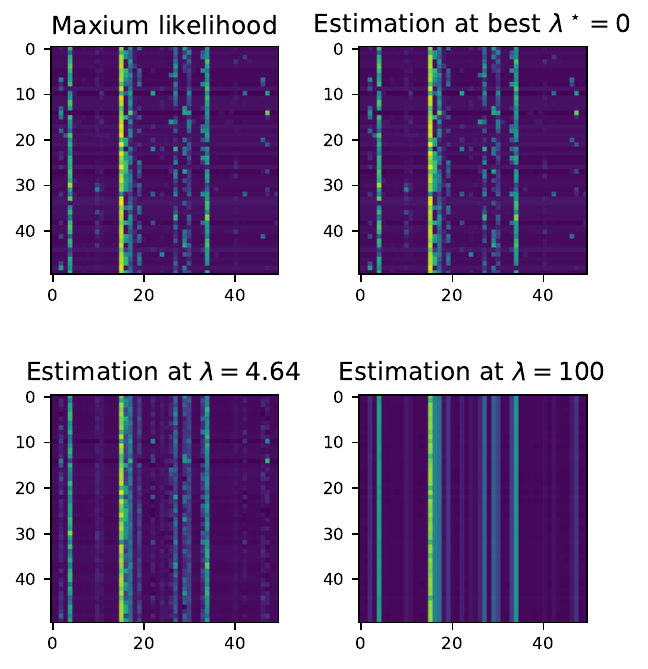}}
    \subfigure[]{\includegraphics[width=.45\linewidth]{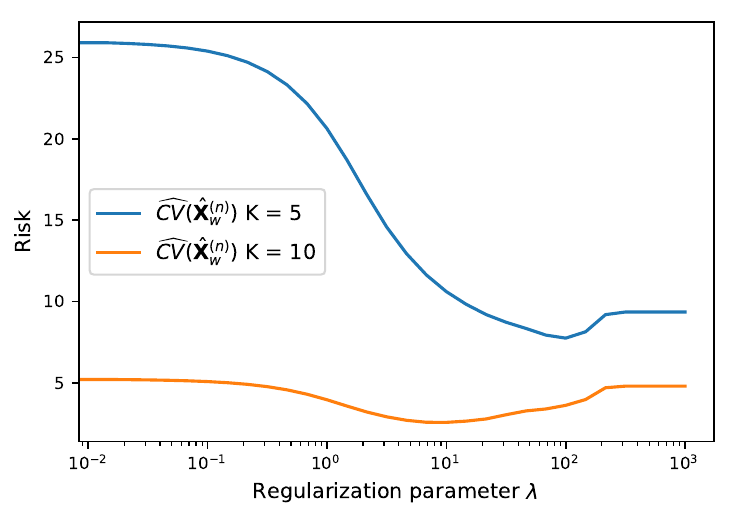}}%
      \subfigure[]{\includegraphics[width=.45\linewidth]{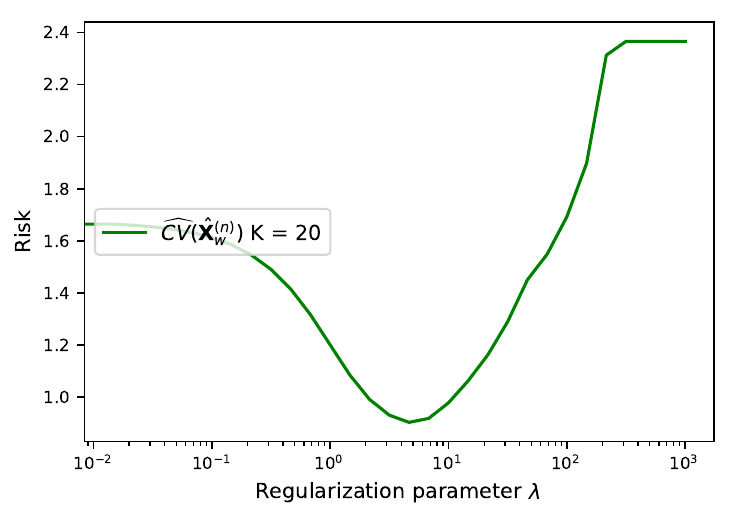}}%
  \caption{
    Results of the low-rank variational estimator on the COMBO dataset.
    (a) Evaluation of $\MUKLA$ with respect to the regularization parameter
    $\lambda$ for different  orders in the Taylor expansion.
    (b) The upper-left $50 \times 50$ sub-matrices obtained
    by the Maximum Likelihood estimator,
    the results obtained using the optimal parameter $\lambda^\ast = 0$,
    $\lambda = 5.4$ and $\lambda = 10$, respectively from top to bottom,
    left to right. {\color{black} (c,d) Estimation of the KL risk using $K$-fold cross-validation. For $K \in \{5,10,20\}$ the minimum of the CV criterion is reached for $\lambda \in \{10^2,10,4.64\}$ respectively.} The estimator  $\hat{\bX}_{\lambda}$ has been computed using the FISTA algorithm with  $T=1000$ iterations. 
  }
  \label{fig:results_BMI}
\end{figure}

\begin{figure}[htbp]
  \centering
  \subfigure[$\lambda = 0$]{\includegraphics[width=1\linewidth]{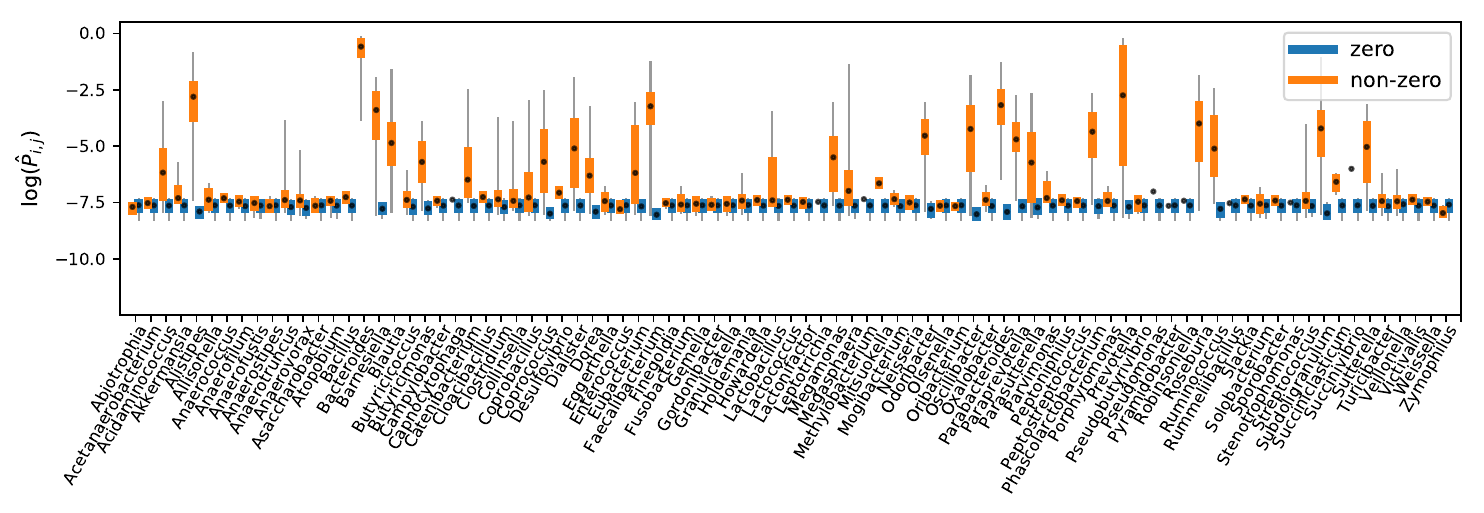}}
   \subfigure[$\lambda = 4.64$]{\includegraphics[width=1\linewidth]{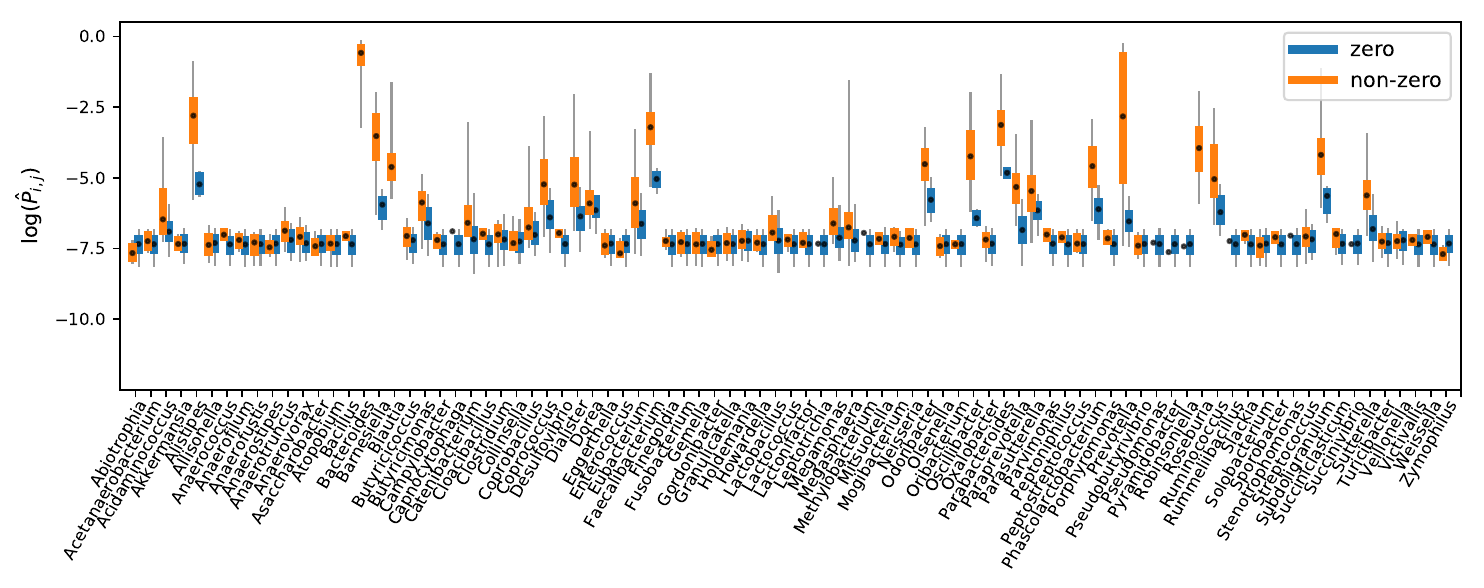}}
    \subfigure[$\lambda = 100$]{\includegraphics[width=1\linewidth]{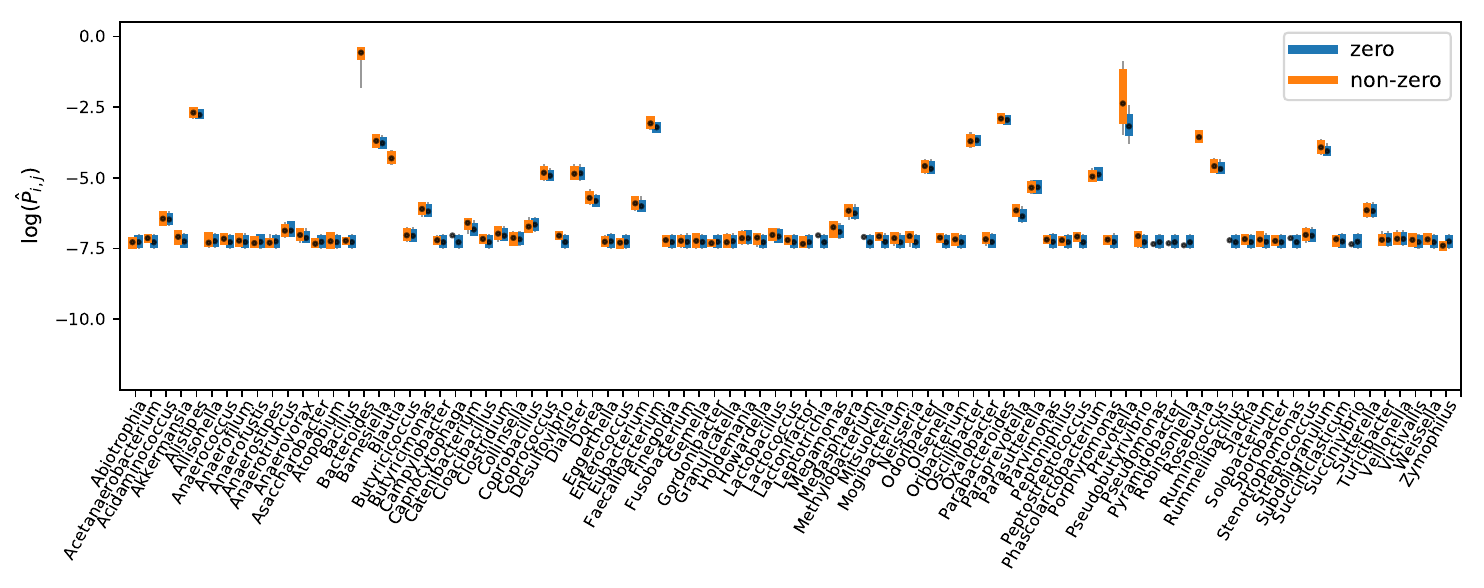}}
  \caption{
    Effects of low-rank variational estimation on the COMBO dataset. Boxplots of the values (in logarithmic scale) of the entries of $\hat{\bX}_{\lambda}$ for each bacteria $j$ within the sets $\Omega_j$ and $\Omega_j^{c}$ of  non-zero and zero observations for different values of the regularization parameter $\lambda$.
  }
  \label{fig:results_BMI_boxplot}
\end{figure}

\bibliographystyle{alpha}
\bibliography{Discrete_GSURE_LowRank_final}

\end{document}